\documentclass[11pt,a4paper,reqno]{amsart}
\usepackage{hyperref}
\usepackage[all]{xy}

\font\cyr=wncyr10 scaled \magstep1%
\def\Sh{\text{\cyr Sh}}

\textwidth 165mm
\textheight 230mm
\topmargin -5mm
\evensidemargin -2mm
\oddsidemargin -2mm
\input amssymb.sty

\pagestyle{headings}

\thanks{Bitan was partially supported by the Hermann Minkowski Center for Geometry.}

\newcommand{\surj}{\twoheadrightarrow}

\DeclareMathOperator{\cok}{coker}
\DeclareMathOperator{\im}{im}

\newcommand{\af}{\text{af}}
\newcommand{\et}{{\text{\rm \'et}}}
\newcommand{\red}{\text{red}}
\newcommand{\sh}{\mathrm{sh}}

\newcommand{\Pic}{\mathrm{Pic~}}

\newcommand{\iy}{\infty}
\newcommand{\wt}{\widetilde}
\newcommand{\wh}{\widehat}
\newcommand{\ov}{\overline}
\newcommand{\un}{\underline}

\newcommand{\BG}{\mathbb{G}}
\newcommand{\BF}{\mathbb{F}}

\newcommand{\BQ}{\mathbb{Q}}

\newcommand{\BZ}{\mathbb{Z}}
\newcommand{\A}{\mathbb{A}}

\newcommand{\fp}{\mathfrak{p}}

\newcommand{\fg}{\mathfrak{g}}
\newcommand{\fX}{\mathfrak{X}}
\newcommand{\Oiy}{\CO_{\{\iy\}}}
\newcommand{\Hom}{\text{Hom}}

\newcommand{\Sp}{\text{Spec} \,}
\newcommand{\Lie}{\text{Lie}}

\newcommand{\ur}{{\mathrm{un}}}
\newcommand{\Ext}{\text{Ext}}
\renewcommand{\sc}{{\mathrm{sc}}}

\newcommand{\bk}{\bigskip}
\newcommand{\fc}{\frac}
\newcommand{\G}{\Gamma}

\newcommand{\s}{\sigma}

\newcommand{\D}{\mathcal{D}}

\newcommand{\CG}{\mathcal{G}}
\newcommand{\CGL}{\mathcal{GL}}

\newcommand{\MA}{\mathcal{A}}

\newcommand{\T}{\mathcal{T}}
\newcommand{\F}{\mathcal{F}}
\newcommand{\CL}{\mathcal{L}}
\newcommand{\lT}{\un{T}^{\text{lft}}}

\newcommand{\CO}{\mathcal{O}}
\newcommand{\B}{\mathcal{B}}
\renewcommand{\a}{\alpha}
\newcommand{\dl}{\delta}

\newcommand{\lm}{\lambda}
\newcommand{\Lm}{\Lambda}
\newcommand{\om}{\omega}

\newcommand{\Z}{\mathbb{Z}}

\newcommand{\la}{\langle}
\newcommand{\ra}{\rangle}

\newtheorem{theorem}{Theorem}[section]
\newtheorem*{maintheorem*}{Main Theorem}
\newtheorem{prop}[theorem]{Proposition}
\newtheorem{cor}[theorem]{Corollary}
\newtheorem{lem}[theorem]{Lemma}

\theoremstyle{definition}
\newtheorem{definition}[theorem]{Definition}
\newtheorem{remark}[theorem]{Remark}
\newtheorem{example}[theorem]{Example}


\begin{document}

\date{}

\baselineskip 20pt
\setcounter{equation}{0}
\pagestyle{plain}
\pagenumbering{arabic}

\title{A building-theoretic approach to relative Tamagawa numbers of semisimple groups over global function fields}

\author{Rony A. Bitan}

\author{Ralf K\"ohl (n\'e Gramlich)}

\begin{abstract}
Let $G$ be a semisimple group defined over a global function field $K$ of a rational curve, 
not anistropic of type $A_n$. 
We express the (relative) Tamagawa number of $G$ in terms of local data 
including the number $t_\iy(G)$ of types in one orbit of a special vertex 
in the Bruhat--Tits building of $G_\iy(\hat{K}_\iy)$ for some place $\iy$ 
and the class number $h_\iy(G)$ of $G$ at $\iy$.
\end{abstract}

\maketitle

\pagestyle{headings}

\markright{RELATIVE TAMAGAWA NUMBERS}

\section{Introduction} \label{section1}
Let $C$ be a smooth, projective and irreducible algebraic curve 
defined over the finite field $\BF_q$ and let $K = \BF_q(C)$ be its function field. 
Let $G$ be a (connected) semisimple  group defined over $K$. 
The Tamagawa number $\tau(G)$ of $G$ is defined as the covolume of the group $G(K)$ of $K$-rational points in the adelic group $G(\mathbb{A})$ (embedded diagonally as a discrete subgroup) with respect to the volume induced by Tamagawa measure on $G(\mathbb{A})$ 
(see \cite{Weil}, \cite{Clozel} and Section~\ref{section4} below).

Let $\pi:G^\sc \to G$ be the universal covering and let $F = \ker(\pi)$ be the fundamental group. 
We assume that 
$G$ is not an almost direct product of anisotropic almost simple groups of type $A_n$ and that 
$(\text{char}(K),|F|)=1$. 
According to the Weil conjecture $\tau(G^\sc)=1$. 
By \cite{Harder:1974} the Weil conjecture is known to hold for split $G$. 
A geometric proof of Weil's conjecture by Gaitsgory--Lurie has been announced in \cite[1.2.3]{Gaitsgory:2011}, see also: \cite{Lurie:homepage}.

In the present article we investigate the relative Tamagawa number 
$\fc{\tau(G)}{\tau(G^\sc)}$ from a building-theoretic point of view --  
in the situation that $G$ is locally {\em isotropic} everywhere. 
Let $\iy$ be some closed point of $K$ and 
let $\A_\iy := \hat{K}_\iy \times \prod_{\fp \neq \iy} \hat{\CO}_\fp$ be the subring of $\{\iy\}$-integral ad\`eles 
in the ad\`ele ring $\A$. 
The double cosets set $\text{Cl}_\iy(G) := G(\A_\iy) \backslash G(\A) / G(K)$ is finite (\ref{class number of semisimple groups}). 
This allows us to split off the class number $h_\iy(G) = |\text{Cl}_\iy(G)|$ 
and proceed by computing the co-volume of $G(K)$ in the trivial coset $G(\A_\iy)G(K)$ 
w.r.t. the Tamagawa measure by considering the natural action of $G(\A_\iy)$ on the Bruhat--Tits building of $G_\iy(\hat{K}_\iy)$, 
resulting in formula (\ref{Tamagawa number semisimple formula}) below
\begin{eqnarray*}
\tau(G) &  = & q^{-(g-1)\dim(G)}\cdot h_\iy(G) \cdot i_\iy(G) \cdot \prod_{\fp} \om_\fp(\un{G}^0_\fp(\hat{\CO}_\fp))
\end{eqnarray*}
where for each $\fp$, $\om_\fp$ is some multiplicative local Haar-measure, 
$\un{G}^0_\fp$ stands for the connected component of the Bruhat--Tits $\hat{\CO}_\fp$-model at some special point 
and $i_\iy(G)$ is an arithmetic invariant related to $G_\iy := G \otimes_K \hat{K}_\iy$.   
The key obstruction for using this formula is to determine a fundamental domain 
for the action of $\CG^0(\Oiy)$ on a $G_\iy(\hat{K}_\iy)$-orbit of the Bruhat--Tits building of $G_\iy(\hat{K}_\iy)$ 
where $\CG^0$ is a flat connected smooth and finite type model of $G$ 
defined over the ring $\Oiy$ of $\{\iy\}$-integers in $K$ w.r.t. the one of $\CG^\sc(\Oiy)$ associated to $G^\sc$.     

In Proposition~\ref{covolume equality} below we will see 
that for computing the {\em relative} local volumes $\fc{i_\iy(G)}{i_\iy(G^\sc)}$ 
it suffices to compare orbits under $G^\sc_\iy(\hat{K}_\iy)$ and $G_\iy(\hat{K}_\iy)$, 
whose behavior is controlled by the number $t_\iy(G)$ of types in the $G_\iy(\hat{K}_\iy)$-orbit of the fundamental special vertex 
and by the number $j_\iy(G)$ expressing the comparison between the discrete subgroups $\CG^\sc(\Oiy)$ 
and $\CG^0(\Oiy)$.

Altogether we then arrive at the following main result of our article. 
By $K^s_\iy$ we denote the separable closure of $\hat{K}_\iy$ 
with Galois group $\fg_\iy = \mathrm{Gal}(K^s_\iy/\hat{K}_\iy)$ 
and inertia subgroup $I_\iy = \mathrm{Gal}(K^s_\iy/K^\ur_\iy)$. 
Moreover, $\s_\iy$ denotes a generator of $\fg_\iy/I_\iy$, i.e., 
the map $\s_\iy : x \mapsto x^{|k_\iy|}$ where $k_\iy$ is the residue field of $\hat{K}_\iy$. 
Let $F_\iy:=\ker[G^\sc_\iy \to G_\iy]$ and $\wh{F_\iy} := \Hom(F_\iy \otimes \hat{K}_\iy^s,\BG_{m,\hat{K}_\iy^s})$.

\begin{maintheorem*} \label{maintheorem}
With these notations and assuming the Weil conjecture validity one has
$$ \tau(G) =  h_\iy(G) \cdot \fc{t_\iy(G)}{j_\iy(G)}. $$  
The number $t_\iy(G)$ satisfies 
$$ t_\iy(G) = |H^1(I_\iy,F_\iy(\hat{K}_\iy^s))^{\s_\iy}| = |\wh{F_\iy}^{\fg_\iy}|$$
and: 
$$ j_\iy(G) = \fc{|H^1_\et(\Oiy,\F)|}{|\F(\Oiy)|} $$
where $\F:= \ker[\CG^\sc \to \CG^0]$.   \\
If in particular $G$ is quasi-split and the genus $g$ of the curve $C$ is $0$ then $j_\iy(G)=1$ and so 
$$ \tau(G) =  h_\iy(G) \cdot t_\iy(G) = h_\iy(G) \cdot |\wh{F_\iy}^{\fg_\iy}|. $$  
\end{maintheorem*}

\begin{cor} (Cor.~\ref{G split} below) 
If $G$ is split and $g=0$ then $h_\iy(G)=1$ and $\tau(G) = t_\iy(G) = |F|$. 
\end{cor}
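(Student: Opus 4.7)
The plan is to specialize the Main Theorem to the split, genus-zero case and to evaluate each factor in the resulting product formula.

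Since $G$ being split is in particular quasi-split, the second branch of the Main Theorem applies and yields $j_\iy(G)=1$ together with
$$\tau(G) = h_\iy(G)\cdot t_\iy(G) = h_\iy(G)\cdot |\wh{F_\iy}^{\fg_\iy}|.$$
The corollary therefore reduces to the two claims $t_\iy(G)=|F|$ and $h_\iy(G)=1$.

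The first claim is essentially formal. Splitness of $G$ over $K$ forces $F$ to be a $K$-split finite central subgroup of $G^\sc$, hence a product of $\mu_{n_i}$'s (legitimate since $(\mathrm{char}\,K,|F|)=1$). Base change to $\hat K_\iy$ preserves splitness, so the Galois group $\fg_\iy$ acts trivially on the character module $\wh{F_\iy}$, and consequently $|\wh{F_\iy}^{\fg_\iy}|=|\wh{F_\iy}|=|F|$.

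The main obstacle is the triviality $h_\iy(G)=1$, which must come from outside the Main Theorem. Since $g=0$ and the ground field is finite, $C\simeq\BP^1_{\BF_q}$, and one may choose $\iy$ so that $\Oiy$ is the coordinate ring of the affine line $\A^1_{\BF_q}$; the split structure of $G$ then extends to a Chevalley group scheme $\un G$ over $\Oiy$. I would next either apply Harder's strong approximation at $\iy$ to the split simply connected cover to obtain $h_\iy(G^\sc)=1$ and transfer the vanishing across the central isogeny $G^\sc\to G$ via the cohomology sequence attached to $1\to F\to G^\sc\to G\to 1$, using the vanishing $H^2_\et(\A^1_{\BF_q},\mu_{n_i})=\mathrm{Br}(\A^1_{\BF_q})[n_i]=0$ (a consequence of $\mathrm{Br}(\BF_q)=0$), or else invoke directly the (function-field) Raghunathan--Ramanathan-type statement that every $\un G$-torsor over $\A^1_{\BF_q}$ is trivial. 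Either route gives $h_\iy(G)=1$, and combining this with the previous two steps yields $\tau(G) = 1\cdot|F| = |F| = t_\iy(G)$, as claimed.
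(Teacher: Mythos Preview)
Your reduction via the quasi-split branch of the Main Theorem and the computation $t_\iy(G)=|\wh{F_\iy}^{\fg_\iy}|=|F|$ match the paper. For $h_\iy(G)=1$, however, the paper takes a different route: it invokes Lemma~\ref{class number ratio},
\[
h_\iy(G)\cdot\frac{h(T^\sc)}{h(T)} \;=\; \frac{|\cok(\pi^\vee_K)|}{t_\iy(G)\cdot|\D|}\cdot\frac{\prod_\fp[\un{T}_\fp(\hat{\CO}_\fp):\un{T}_\fp^0(\hat{\CO}_\fp)]}{[\T(\BF_q):\T^0(\BF_q)]},
\]
and notes that when $G$ is split the tori $T$ and $T^\sc$ are $K$-split of equal rank, hence isomorphic with $h(T)=h(T^\sc)$ and connected integral models everywhere, while $|\cok(\pi^\vee_K)|=|F|=t_\iy(G)$ and $|\D|=1$ by Proposition~\ref{D=1}; the right side collapses to $1$. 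This keeps the argument inside the Section~\ref{section7} machinery (Ono's formula, the Platonov--Rapinchuk technique) and exhibits the split case as a degenerate instance of the general torus identity. Your route via $H^2_\et(\A^1_{\BF_q},\mu_n)=0$ or Raghunathan--Ramanathan is more direct and bypasses that machinery, at the price of importing external results.

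Two points in your sketch need tightening. You conflate the adelic vanishing $h_\iy(G^\sc)=1$ with the cohomological one $H^1_\et(\Oiy,\CG^\sc)=1$; the boundary map you exploit lives in \'etale cohomology, so you should cite Lemma~\ref{H1(Gsc) trivial2} for the latter and then use Nisnevich's embedding $\mathrm{Cl}_\iy(G)\hookrightarrow H^1_\et(\Oiy,\CG)$ (as in Remark~\ref{PGLn over elliptic curve}) to pass back to $h_\iy(G)$. And $\mathrm{Br}(\A^1_{\BF_q})=0$ is not a formal consequence of $\mathrm{Br}(\BF_q)=0$: it needs global reciprocity for $\BF_q(t)$ (a class in $\mathrm{Br}(\Oiy)$ has trivial invariant at every $\fp\neq\iy$, hence at $\iy$ as well). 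Restricting to a rational $\iy$ is harmless once you observe that $\tau(G)$ and $t_\iy(G)=|F|$ are independent of the choice of $\iy$, but this should be said.
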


\begin{cor} (Cor.~\ref{G adjoint} below)
If $G$ is adjoint (not necessarily split) and $g=0$ then $h_\iy(G)=1$ and $\tau(G) = t_\iy(G) = |\wh{F}^\fg|$, 
where $\wh{F}:=\mathrm{Hom}(F(K^s),\BG_{m,K^s})$ and $\fg := \mathrm{Gal}(K^s/K)$.   
\end{cor}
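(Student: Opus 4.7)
The plan is to reduce to the Main Theorem in its quasi-split form and then translate the local invariant $|\wh{F_\iy}^{\fg_\iy}|$ into the global one $|\wh{F}^\fg|$, exploiting the structure of $K = \BF_q(t)$. I proceed in three steps.

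First, I verify the hypotheses of the Main Theorem. Under the standing assumption that $G$ is locally isotropic at every place, combined with $G$ adjoint and $C = \BP^1_{\BF_q}$ for $g=0$, one invokes the Hasse principle for adjoint semisimple groups over the rational function field to conclude that $G$ is quasi-split. The Main Theorem then yields
\[ \tau(G) = h_\iy(G) \cdot t_\iy(G) = h_\iy(G) \cdot |\wh{F_\iy}^{\fg_\iy}|, \]
so it suffices to show $h_\iy(G) = 1$ and $|\wh{F_\iy}^{\fg_\iy}| = |\wh{F}^\fg|$.

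Second, I compute the class number. The short exact sequence $1 \to F \to G^\sc \to G \to 1$ induces the standard long exact sequence in adelic and global Galois cohomology, which expresses $h_\iy(G)$ in terms of $h_\iy(G^\sc)$ and the cokernel of a restriction map between $H^1(K, F)$ and its $\{\iy\}$-adelic analogue. Harder's class number one theorem (extended by Gaitsgory--Lurie in the non-split quasi-split simply connected case) gives $h_\iy(G^\sc) = 1$. The remaining cohomological obstruction vanishes for $g = 0$: since $F$ is \'etale of order coprime to $p = \mathrm{char}(K)$, one reduces to \'etale cohomology of $F$ on $\Sp \Oiy$ together with the local contribution at $\iy$, and one checks directly on $\BP^1_{\BF_q}$ that this cokernel is trivial.

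Third, I show $|\wh{F_\iy}^{\fg_\iy}| = |\wh{F}^\fg|$. The inclusion $\wh{F}^\fg \subseteq \wh{F_\iy}^{\fg_\iy}$ is automatic from $\fg_\iy \hookrightarrow \fg$. For the reverse inclusion I use that $F$ extends to a finite \'etale group scheme over $\Sp \Oiy$ of order prime to $p$, so that the $\fg$-action on $\wh{F}$ factors through the prime-to-$p$ quotient of $\pi_1^\et(\Sp \Oiy)$, which equals $\mathrm{Gal}(\ov{\BF_q}/\BF_q)$ because $\pi_1^\et(\A^1_{\ov{\BF_q}})$ is pro-$p$. Thus the splitting field of $\wh{F}$ lies in a constant field extension $\BF_{q^n}(t)/\BF_q(t)$, in which $\iy$ has a unique place above it with full decomposition group, whence $\fg_\iy$ and $\fg$ have the same image in $\mathrm{Aut}(\wh{F})$ and their invariant subgroups coincide.

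The most delicate step is the class number computation, which requires unwinding the explicit class number formula for the pair $(G^\sc, G)$ and verifying the triviality of a Galois-cohomological cokernel in the $g = 0$ setting. The other two steps reduce fairly directly to the Hasse principle for adjoint groups and to the pro-$p$ nature of the geometric \'etale fundamental group of $\A^1_{\BF_q}$, respectively.
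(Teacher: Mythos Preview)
Your proposal takes a genuinely different route from the paper, but it has real gaps in Steps~1 and~2, and Step~3 is incomplete for general~$\iy$.

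\textbf{Step 1.} The corollary lives in Section~\ref{section7}, where quasi-splitness is a \emph{standing assumption}; you do not need to deduce it. Your deduction is in any case unsound: the Hasse principle for adjoint groups says that $H^1(K,G)$ injects into the product of local cohomologies, i.e.\ that global forms are detected locally. It does not say that a locally \emph{isotropic} adjoint group is quasi-split, because locally isotropic is strictly weaker than locally quasi-split.

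\textbf{Step 2.} This is the essential gap. You assert that a certain ``cohomological obstruction vanishes for $g=0$'' and that ``one checks directly on $\BP^1_{\BF_q}$ that this cokernel is trivial,'' but you neither identify the cokernel precisely nor indicate how to compute it. (Also, $h_\iy(G^\sc)=1$ is strong approximation, due to Prasad; Harder and Gaitsgory--Lurie concern $\tau(G^\sc)=1$, which is a different statement.) The paper's argument is quite different and uses a structural fact you do not invoke: since $G$ is adjoint and quasi-split, its Cartan subgroup $T$ is \emph{also} quasi-trivial (Remark~\ref{quasi-trivial Cartan}). Hence all indices $[\un{T}_\fp(\hat{\CO}_\fp):\un{T}^0_\fp(\hat{\CO}_\fp)]$ and $[\T(\BF_q):\T^0(\BF_q)]$ are~$1$, and Ono's class-number isogeny formula gives $h(T)=h(T^\sc)$. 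Plugging this into Lemma~\ref{class number ratio} with $|\D|=1$ (Proposition~\ref{D=1}) yields $h_\iy(G)\cdot t_\iy(G)=|\cok(\pi^\vee_K)|$, and quasi-triviality of $T$ again gives $H^1(\fg,X^*(T))=0$, whence $|\cok(\pi^\vee_K)|=|\wh{F}^\fg|$.

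\textbf{Step 3.} Your fundamental-group argument is a nice idea, and it does show $\wh{F}^\fg=\wh{F_\iy}^{\fg_\iy}$ \emph{when $\deg(\iy)=1$}, since then $\Sp\Oiy\cong\A^1_{\BF_q}$ and the prime-to-$p$ quotient of $\pi_1^\et(\A^1_{\ov{\BF_q}})$ is trivial. But for $\deg(\iy)=d>1$, $\Sp\Oiy$ becomes geometrically $\BP^1$ minus $d$ points, whose tame fundamental group is nontrivial; and even in the constant-field-extension case the decomposition group at $\iy$ has index $\gcd(d,n)$ in $\mathrm{Gal}(\BF_{q^n}/\BF_q)$, so the equality of invariants need not follow from your argument. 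The paper avoids this issue entirely: once $h_\iy(G)\cdot t_\iy(G)=|\wh{F}^\fg|$ is established as above, the inclusion $\wh{F}^\fg\subseteq\wh{F_\iy}^{\fg_\iy}$ forces $h_\iy(G)\le 1$, hence $h_\iy(G)=1$ and $t_\iy(G)=|\wh{F}^\fg|$ for \emph{every} choice of $\iy$.

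In short: your Step~3 is a pleasant alternative in the degree-one case, but the heart of the corollary is Step~2, and there the paper's method---exploiting that \emph{both} $T^\sc$ and $T$ are quasi-trivial---is what makes the computation go through.
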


Our method of proof is a combination of geometric group theory and
cohomology. Our approach is independent of Prasad's covolume formula described
in \cite{Pr2}, but it is likely that with some effort it can be used to deduce
our Main Theorem. 

As an application in case the group $G$ is quasi-split and the genus $g$ of $C$ is $0$,  
we combine our result with \cite[Formula (3.9.1')]{Ono1} and
the techniques from \cite[\S~8.2]{PR} in order to relate the cokernels of
Bourqui's degree maps $\mathrm{deg}_{T^\sc}$ and $\mathrm{deg}_T$ from \cite[Section~2.2]{Bou}, where
$T^\sc$ and $T$ denote suitable Cartan subgroups of $G^\sc$ and $G$ respectively;
cf.\ Proposition~\ref{D=1}. These concrete computations allow us to also provide a wealth of examples in Section~\ref{section6} for which we compute the relative Tamagawa numbers. 
We also demonstrate the result in a case of a split group defined over the function field of an elliptic curve (Remark \ref{PGLn over elliptic curve}).   

This article is organized as follows: 
In the preliminary Section~\ref{section2} we fix the relevant notions from Bruhat--Tits theory. 
In Section~\ref{section3} we compute volumes of parahoric subgroups over local fields, their maximal unramified extensions, and their valuation rings. 
In Section~\ref{section4} we revise the definition of the Tamagawa number of semisimple $K$-groups  
and establish a decomposition of $G(\A)/G(K)$ enabling us to express $\tau(G)$ in terms of a global invariant and a local one.  
In Section~\ref{section5} we compute cohomology groups over rings of $S$-integers with $|S|=1$,   
use Bruhat--Tits theory and Serre's formula (\cite[p.~84]{Ser1}, \cite[Corollary~1.6]{Bass/Lubotzky:2001}) 
in order to derive the above-mentioned formula (\ref{Tamagawa number semisimple formula}) for computing the Tamagawa number. 
In Section~\ref{section6} we express the number $t_\iy(G)$ of types in the orbit of a special point 
in terms of $F_\iy$, accomplishing the proof of our Main Theorem.  
The final Section~\ref{section7} addresses the above-mentioned application and examples.

\bigskip \noindent
{\bf Acknowledgements:} 
The authors thank M.~Borovoi, D.~Bourqui, B.~Conrad, P.~Gille, C.~D.~Gonz\'alez-Avil\'es, B.~Kunyavski\u\i, 
Q.~Liu, G.~McNinch, G.~Prasad and R.~Weissand for valuable discussions concerning the topics of the present article. 
They also thank A.~Rapinchuk and an anonymous referee for helpful comments on an earlier version of this article.  

\bk

\section{Basic notions from Bruhat-Tits theory} \label{Local invariants}\label{section2}
We retain the notation from Section~\ref{section1},  
only we assume $G$ to be quasi-split. 
In Section \ref{section3}, however, this assumption will be dropped. 
Since $K$ is a function field the valuations defined on $K$ are non-Archimedean. 
For any prime $\fp$ of $K$, let $K_\fp$ be the localization of $K$ at $\fp$, let $\hat{K}_\fp$ be its completion there  
and let $\CO_\fp$ and $\hat{\CO}_\fp$ be their ring of integers respectively. 
Let $k_\fp = \hat{\CO}_\fp / \fp$ be the corresponding (finite) residue field. 
Then $G_\fp = G \otimes_K \hat{K}_\fp$ is semisimple.  
The second assumption $(\text{char}(K),|F|)=1$ says that $\pi$ is separable.   
$T_\fp = T \otimes_K \hat{K}_\fp$ is a Cartan subgroup in $G_\fp$.  
Let $(X^*(T_\fp),\Phi,X_*(T_\fp),\Phi^\vee)$ be the root datum of $(G_\fp,T_\fp)$ and let $W$ be the associated constant Weyl group. 
Let $\B_\fp$ be the Bruhat--Tits building associated to the adjoint group of $G_\fp$ 
(cf.\ \cite[Section~7]{Bruhat/Tits:1972}, also \cite[Chapter~11]{Abramenko/Brown:2008}) 
and let $\MA$ be the apartment in $\B_\fp$ corresponding to $T_\fp$. 

\bk

We fix a special vertex $x \in \MA$, 
i.e., a vertex whose isotropy group in the setwise stabilizer of $\MA$ is isomorphic to $W$.   
Since the Bruhat--Tits building $\B_\fp$ is locally finite, the stabilizer
$P_x$ of $x$ in $G_\fp(\hat{K}_\fp)$ 
is a compact subgroup of $G_\fp(\hat{K}_\fp)$. 
Let $\un{G}_x$ be the Bruhat--Tits model associated to $P_x$, 
i.e., such that $\un{G}_x(\hat{\CO}_\fp) = P_x$. 
Denote by $\ov{G}_x$ the reduction modulo $\fp$ of $\un{G}_x$ 
and by $\un{G}_x^0$ the open subscheme of $\un{G}_x$ 
whose reduction is the identity component $\ov{G}_x^0$ of $\ov{G}_x$. 
Let $\un{T}_\fp$ be the N\'eron--Raynaud $\hat{\CO}_\fp$-model 
(shortly referred as NR-model) of $T_\fp$ which is of finite type, 
i.e., such that $\un{T}_\fp(\hat{\CO}_\fp)$ is the maximal compact subgroup of $T_\fp(\hat{K}_\fp)$ 
(see Theorem 2 in \cite[\S~10.2]{BLR} 
and \cite[\S~3.2]{CY} for an explicit construction).  
Denote by $\un{T}_\fp^0$ its subscheme having a connected special fiber.   
$\un{T}_\fp^0(\hat{\CO}_\fp)$ is the pointwise stabilizer of $\MA$ 
and is a subgroup of $\un{G}_x^0(\hat{\CO}_\fp)$. 
Since $G_\fp$ is semisimple and the residue field $k_\fp$ is finite, 
the adjoint group of $G_\fp(\hat{K}_\fp)$ 
permutes transitively the special vertices (see \cite[\S~2.5]{Tits}). 
If $\Phi$ is not reduced, we adapt the convention of Prasad in \cite[\S~1.2]{Pr2} and Gross in \cite[\S~4]{Gro} 
which is the following: for each component of the local Dynkin diagram of the type
\begin{equation*} 
\bullet \Longleftarrow \bullet \ \text{---} \ \bullet \cdot \cdot \cdot 
\bullet \ \text{---} \ \bullet \Longleftarrow \bullet  
\end{equation*}
we choose the special vertex at the right end of the diagram. 
Now $\un{G}_x$ is well defined up to isomorphism
and would be denoted from now on by $\un{G}_\fp$. 
$x$ is called the {\em fundamental special vertex} of $\B_\fp$.

\begin{remark} \label{quasi-trivial Cartan}
If $G$ is either simply connected or adjoint, 
it is a finite product of restriction of scalars $R_{K_i/K}(G_i)$ 
where each $K_i$ is a separable extension of $K$ and $G_i$ is split and simple.  
If $G$ is also quasi-split, its Cartan subgroup $T$ 
being the product of the centralizers of the split tori of the $G_i$'s, 
is a maximal torus (see in the proof of \cite[Prop.~16.2.2.]{Spr}) 
being quasi-trivial, i.e. a finite product of Weil's tori $R_{K_i/K}(\BG_m)$.   
In this case $\un{T}_\fp$ is connected at any $\fp$ (see \cite[Prop.~2.4]{NX}). 
\end{remark}

\bk

\section{Volumes of parahoric subgroups} \label{section3}
We retain our restriction of $G$ to be an almost direct product of anisotropic almost simple groups of type $A_n$, 
implying that $G_\fp$ is $K_\fp$-isotropic (see \cite[4.3~and~4.4]{BT3}).   
As $\hat{K}_\fp$ is locally compact 
its underlying additive group admits a Haar measure which is unique up to a scalar multiple. 
As in \cite[\S~2.1]{Weil} we normalize such a measure $dx_\fp$ by $dx_\fp(\hat{\CO}_\fp) = 1$. 
This induces a multiplicative Haar measure $\varpi_\fp$ 
on the locally compact group $G_\fp(\hat{K}_\fp)$, see \cite[\S~2.2]{Weil}. 
Our choice of the Bruhat--Tits model in the preceding section allows us to
easily compute the volume of the fundamental parahoric subgroup with respect
to this Haar measure.

\begin{prop} \label{volume of identity component}  
Up to a multiplication by a scalar of $K_\fp^\times$, 
which is uniquely determined by the normalization of a multiplicative Haar measure on $G_\fp(\hat{K}_\fp)$,  
the volume of $\un{G}_\fp^0(\hat{\CO}_\fp)$ w.r.t. this measure is  
$|\ov{G}_\fp^0(k_\fp)| \cdot |k_\fp|^{-\dim G_\fp}.$ 
\end{prop}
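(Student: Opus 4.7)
The plan is to follow Weil's template from \emph{Adeles and algebraic groups}, Chapter II, adapted to the Bruhat--Tits integral model. First I would pick a left-invariant top-degree differential form $\om_\fp$ on $\un{G}_\fp^0$ over $\Sp \hat{\CO}_\fp$. Since $\un{G}_\fp^0$ is a smooth affine group scheme over $\hat{\CO}_\fp$, the module of left-invariant top forms is free of rank one over $\hat{\CO}_\fp$; hence such an $\om_\fp$ exists and is unique up to multiplication by a unit in $\hat{\CO}_\fp^\times$. The measure $|\om_\fp|_\fp$ on $G_\fp(\hat{K}_\fp)$ it induces via $dx_\fp$ agrees, up to a scalar in $K_\fp^\times$, with the Haar measure $\varpi_\fp$ fixed in the preceding paragraph; this is exactly the ambiguity recorded in the statement.

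Next I would analyze the reduction map $r : \un{G}_\fp^0(\hat{\CO}_\fp) \to \ov{G}_\fp^0(k_\fp)$. Smoothness of $\un{G}_\fp^0$ over $\hat{\CO}_\fp$ combined with Hensel's lemma guarantees surjectivity of $r$, and its fibres are left translates of $r^{-1}(\bar e)$. Left-invariance of $\om_\fp$ therefore gives the decomposition
$$
\varpi_\fp\bigl(\un{G}_\fp^0(\hat{\CO}_\fp)\bigr) \;=\; |\ov{G}_\fp^0(k_\fp)| \cdot \varpi_\fp\bigl(r^{-1}(\bar e)\bigr).
$$

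It then remains to evaluate $\varpi_\fp(r^{-1}(\bar e))$. Smoothness at the identity section provides an \'etale chart $U \hookrightarrow \un{G}_\fp^0$ from a neighbourhood of the origin in $\mathbb{A}^d_{\hat{\CO}_\fp}$, with $d = \dim G_\fp$, under which $\om_\fp$ pulls back to $u \cdot dx_1 \wedge \cdots \wedge dx_d$ for some $u \in \hat{\CO}_\fp^\times$. Via this chart $r^{-1}(\bar e)$ is identified with $\fp^d \subset \hat{\CO}_\fp^d$, whose volume with respect to the normalization $dx_\fp(\hat{\CO}_\fp) = 1$ is $|k_\fp|^{-d}$. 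Combining the two factors yields the claimed formula.

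The main obstacle is justifying the existence of the globally non-vanishing left-invariant top form $\om_\fp$ on the integral model $\un{G}_\fp^0$, rather than just on its generic fibre. This rests on the smoothness of the Bruhat--Tits model together with the standard fact that the relative cotangent bundle of a smooth group scheme is trivial (as it is pulled back from the Lie algebra along the unit section); once this is in place, the surjectivity of $r$ and the local coordinate computation are routine. A secondary subtlety is keeping track of the scalar in $K_\fp^\times$: different choices of $\om_\fp$ differ by a unit in $\hat{\CO}_\fp^\times$ and so produce the same volume, while the identification with $\varpi_\fp$ only pins the measure down up to a $K_\fp^\times$-multiple, matching exactly the indeterminacy asserted in the proposition.
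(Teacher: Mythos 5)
Your proof is correct and arrives at the same conclusion, but by a genuinely different route in the key step of computing the volume of the reduction-kernel. The paper first assumes $G_\fp$ quasi-split and invokes the Bruhat--Tits decomposition $\un{G}_\fp^0(\hat{\CO}_\fp) = \un{T}_\fp^0(\hat{\CO}_\fp)\,\fX(\hat{\CO}_\fp)$ (\cite[Cor.~4.6.7]{BT2}), then computes the preimage of the identity piecewise: $(1+\fp)^{\dim T_\fp}$ in the torus factor and $\fp^{|\Phi|}$ in the root subgroup factor, so that the kernel is homeomorphic to $\fp^{\dim T_\fp + |\Phi|} = \fp^d$; the non-quasi-split case is then handled separately by twisting to a quasi-split inner form following Gross. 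You instead use the invariant volume form and an \'etale chart near the identity section, under which $r^{-1}(\bar e)$ is carried to $\fp^d$ directly, and left-invariance decomposes the total volume into $|\ov{G}_\fp^0(k_\fp)|$ translates of that kernel. Your route is more elementary and more general: it relies only on smoothness of the integral model (so that the top exterior power of $\Omega^1_{\un{G}_\fp^0/\hat{\CO}_\fp}$ is free of rank one, being pulled back from the unit section), not on the explicit Bruhat--Tits structure theory, and in particular it never needs the quasi-split hypothesis, so the separate twisting step at the end of the paper's proof disappears. What the paper's method buys is a very concrete, coordinate-level picture of the kernel, which the authors re-use almost verbatim in Lemma~\ref{volumes of compact groups}; your smooth-chart argument would need to be rerun there, which is harmless. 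Your discussion of the $K_\fp^\times$-ambiguity (two invariant $\hat{\CO}_\fp$-forms differ by a unit, hence give the same volume; the identification with $\varpi_\fp$ is only up to $K_\fp^\times$) correctly matches the hedge in the statement.
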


\begin{proof}
Assume first that $G_\fp$ is quasi-split over $\hat{K}_\fp$. 
As $\un{G}_\fp^0$ is smooth and $\hat{\CO}_\fp$ is Henselian and therefore complete,  
the reduction of its group of $\hat{\CO}_\fp$-points is surjective (see \cite[Proposition~2.3/5]{BLR}). 
Consider the exact sequence
$$ 
1 \to \un{G}_\fp^1(\hat{\CO}_\fp) \to \un{G}_\fp^0(\hat{\CO}_\fp) \stackrel{\red}{\longrightarrow} \ov{G}_\fp^0(k_\fp) \to 1. 
$$
$\un{G}_\fp^1(\hat{\CO}_\fp)$ is the reduction preimage of $1_d$ in $\ov{G}_\fp^0(k_\fp)$,   
where $d = \dim \un{G}_\fp^0 = \dim G_\fp$.  
Since $T_\fp$ is maximal and $G_\fp$ is quasi-split, by \cite[Corollary~4.6.7]{BT2} 
$\un{G}_\fp^0(\hat{\CO}_\fp) = \un{T}_\fp^0(\hat{\CO}_\fp) \fX(\hat{\CO}_\fp)$ 
where $\fX(\hat{\CO}_\fp)$ is the group generated by the root subgroups 
each fixing an half apartment containing $x$. 
The preimage of $1_d$ in $\un{G}_\fp^0(\hat{\CO}_\fp) / \un{T}_\fp^0(\hat{\CO}_\fp)$ 
is homeomorphic to the additive group $\fp^{|\Phi|}$. 
The preimage of $1_d$ in $\un{T}_\fp^0(\hat{\CO}_\fp)$ is isomorphic to $(1+\fp)^{\dim T_\fp}$, 
being homeomorphic to $\fp^{\dim T_\fp}$. 
Together, $\Lie(\un{G}_\fp^1(\hat{\CO}_\fp)) \cong \fp^{\dim T_\fp + |\Phi|} = \fp^d$. 
The normalization condition $dx_\fp(\hat{\CO}_\fp)=1$ is equivalent to $dx_\fp(\fp) = |k_\fp|^{-1}$ 
implying that
$$ \varpi_\fp(\un{G}_\fp^1(\hat{\CO}_\fp)) = \bigwedge_{i=1}^{d} dx_\fp(\fp^d) = |k_\fp|^{-d}. $$
Now from the exactness of the sequence we deduce
\begin{equation*}
\varpi_\fp(\un{G}_\fp^0(\hat{\CO}_\fp)) = [\un{G}_\fp^0(\hat{\CO}_\fp):\un{G}_\fp^1(\hat{\CO}_\fp)] \cdot \varpi_\fp(\un{G}_\fp^1(\hat{\CO}_\fp))  
                               = |\ov{G}_\fp^0(k_\fp)| \cdot |k_\fp|^{-d}.  
\end{equation*}
If $G_\fp$ is not quasi-split this computation needs to be applied to an inner form of $G_\fp$ 
that is quasi-split, 
w.r.t. the twisted measure of $\varpi_\fp$, 
being again some scalar multiple of $\varpi_\fp$ (see \cite[Prop.~4.7]{Gro}).  
\end{proof}

\begin{remark} \label{G = G0 in unramified extension} \cite[4.6.22]{BT2}
If $G_\fp$ splits over an unramified extension, then $\un{G}_\fp(\hat{\CO}_\fp)=\un{G}_\fp^0(\hat{\CO}_\fp)$. 
\end{remark}

Let $\pi_\fp: G_\fp^\sc \to G_\fp$ be the universal covering of $G_\fp$. 
According to \cite[4.4.18(VI)]{BT2}, 
the cover $\pi_\fp$ restricted to $T^\sc_\fp$ extends to a homomorphism
$\un{T^\sc}_\fp \to \un{T}_\fp$ over $\Sp \hat{\CO}_\fp$. 
Together with the associated root subgroups $\hat{\CO}_\fp$-scheme $\fX$, which is equal for both $G^\sc_\fp$ and $G_\fp$, 
this homomorphism over $\Sp \hat{\CO}_\fp$ extends to a homomorphism $\un{G^\sc}_\fp
\to \un{G}_\fp$ of the Bruhat--Tits schemes. 
Let $\un{F}_\fp := \ker[\un{G^\sc}_\fp \to \un{G}_\fp]$. 
It is finite, flat and its generic fiber is $F_\fp$. 
As $F_\fp$ is central, so is $\un{F}_\fp$, 
thus embedded in $\un{T^\sc}_\fp$. 

Let $\hat{K}_\fp^\ur$ be the maximal unramified extension of $\hat{K}_\fp$, 
i.e., the strict henselization of $\hat{K}_\fp$ with ring of integers $\hat{\CO}_\fp^\sh$ and algebraically closed residue field $k_\fp^s$. 
Let $\hat{K}_\fp^s$ be a separable closure of $\hat{K}_\fp$ containing $\hat{K}_\fp^\ur$ 
and let $I_\fp = \mathrm{Gal}(\hat{K}_\fp^s/\hat{K}_\fp^\ur)$ be the inertia subgroup of $\fg_\fp = \mathrm{Gal}(\hat{K}_\fp^s/\hat{K}_\fp)$. 
Let $\s_\fp$ be a generator of $\fg_\fp/I_\fp$, 
i.e., the map $\sigma_\fp : x \mapsto x^{|k_\fp|}$ where as above $k_\fp$ is the residue field of $\hat{K}_\fp$.

\begin{prop} \label{tori surjection} 
Any separable isogeny $\pi_\fp: T_\fp \to T'_\fp$ of $\hat{K}_\fp$-tori 
can be extended to an isogeny $\un{\pi}:\un{T^\ur}_\fp \to \un{(T')^\ur}_\fp$ over $\hat{\CO}_\fp^\sh$,  
inducing a surjection $ \un{T^\ur}^0_\fp(\hat{\CO}_\fp^\sh) \to \un{(T')^\ur}^0_\fp(\hat{\CO}_\fp^\sh)$.  
\end{prop}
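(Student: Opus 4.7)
The plan is to extend $\pi_\fp$ via the universal property of the N\'eron--Raynaud models, and then to deduce surjectivity on $\hat{\CO}_\fp^\sh$-points from the vanishing of \'etale cohomology over a strictly henselian base. First I would base-change $\pi_\fp$ to $\hat{K}_\fp^\ur$ and invoke the N\'eron mapping property of the lft-N\'eron models (see \cite[\S~10.1, Thm.~2]{BLR}) to obtain a unique morphism $\lT_\fp \otimes_{\hat{\CO}_\fp}\hat{\CO}_\fp^\sh \to \un{T'}^{\text{lft}}_\fp \otimes_{\hat{\CO}_\fp}\hat{\CO}_\fp^\sh$ extending $\pi_\fp \otimes \hat{K}_\fp^\ur$. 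Because a group homomorphism sends the identity to the identity and the connected component of the identity to that of the target, this extension restricts to a homomorphism $\un{\pi} : \un{T^\ur}_\fp \to \un{(T')^\ur}_\fp$ of the ft-N\'eron--Raynaud models, and further to a homomorphism $\un{T^\ur}^0_\fp \to \un{(T')^\ur}^0_\fp$ of their identity components over $\hat{\CO}_\fp^\sh$.

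Next I would verify that the restriction to identity components is an isogeny of smooth commutative $\hat{\CO}_\fp^\sh$-group schemes: source and target are both smooth of the same relative dimension $\dim T_\fp$, and $\un{\pi}$ is fiberwise an isogeny -- on the generic fiber by assumption, and on the special fiber via the cocharacter-lattice description of the connected N\'eron model of a torus (\cite[\S~10.1--10.2]{BLR}), under which an isogeny of generic fibers produces an isogeny of connected special fibers. Consequently $\un{\pi}$ is faithfully flat with finite flat kernel $\un{F}$ having generic fiber $\ker \pi_\fp$; the separability assumption, together with the equicharacteristic nature of our local function field $\hat{K}_\fp$, forces $\un{F}$ to be finite \'etale over $\hat{\CO}_\fp^\sh$, making $\un{\pi}$ into an \'etale $\un{F}$-torsor.

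Finally, from the short exact sequence of \'etale sheaves on $\Sp \hat{\CO}_\fp^\sh$
\[ 1 \to \un{F} \to \un{T^\ur}^0_\fp \stackrel{\un{\pi}}{\longrightarrow} \un{(T')^\ur}^0_\fp \to 1, \]
the long exact cohomology sequence produces
\[ \un{T^\ur}^0_\fp(\hat{\CO}_\fp^\sh) \to \un{(T')^\ur}^0_\fp(\hat{\CO}_\fp^\sh) \to H^1_\et(\hat{\CO}_\fp^\sh,\un{F}). \]
Since $\hat{\CO}_\fp^\sh$ is strictly henselian, every \'etale cover of $\Sp\hat{\CO}_\fp^\sh$ is trivial, so $H^1_\et(\hat{\CO}_\fp^\sh,\un{F}) = 0$, and the desired surjectivity follows. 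The main obstacle I anticipate lies in the second step: verifying that $\un{\pi}$ indeed restricts to a faithfully flat morphism of connected N\'eron models with finite flat kernel, because surjectivity on the connected component of the \emph{special fiber} is not automatic from the universal property alone. Unpacking the cocharacter-lattice description of the connected N\'eron model of a torus over a strictly henselian base (or appealing to \cite[4.4.18]{BT2} on the compatibility of Bruhat--Tits schemes with isogenies) is what makes this step work.
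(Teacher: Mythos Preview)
Your proposal is correct in outline and reaches the same conclusion, but it takes a genuinely different route from the paper's proof. The paper does not argue directly via \'etale cohomology over the strictly henselian base; instead it decomposes $T_\fp^\ur$ and $(T')_\fp^\ur$ into their maximal $\hat K_\fp^\ur$-split subtori $T_{I,\fp}$, $(T')_{I,\fp}$ and $I_\fp$-anisotropic quotients $T_{a,\fp}$, $(T')_{a,\fp}$, extends this decomposition to an exact sequence of identity components of N\'eron models, and then proves surjectivity on each end separately: for the split part by a direct Kummer-type argument (the N\'eron models are split, hence connected, and the kernel is a split finite multiplicative group), and for the anisotropic part by noting the N\'eron model is of finite type and citing \cite[\S7.3, Prop.~6]{BLR}. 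Surjectivity on $\hat{\CO}_\fp^\sh$-points is then deduced by passing to the algebraically closed residue field and lifting via smoothness.

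Your cohomological endgame (the vanishing of $H^1_\et(\hat{\CO}_\fp^\sh,\un{F})$ for finite \'etale $\un{F}$) is cleaner than the paper's reduction-and-lift argument, and it exploits the standing hypothesis $(\mathrm{char}(K),|F|)=1$ in exactly the right way. The price is the step you yourself flag: you must show that the N\'eron-extended map restricts to a faithfully flat isogeny on identity components, i.e.\ that the special-fiber map is surjective. This is \emph{not} automatic from the N\'eron mapping property alone (the universal property only gives a morphism, not control on the special-fiber image). Your gesture toward the ``cocharacter-lattice description'' is on the right track but is essentially a compressed version of the paper's split/anisotropic decomposition; unpacking it carefully would lead you back to the paper's argument. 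A sharper alternative is to invoke \cite[\S7.3, Prop.~6]{BLR} directly---since the degree of $\pi_\fp$ is prime to the residue characteristic, the extended morphism is \'etale and an isogeny on identity components---though you should check that this statement applies in the lft-N\'eron setting and not just the finite-type case (the paper is careful to reduce to the anisotropic, hence finite-type, situation before citing it).
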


\begin{proof}
Any $\hat{K}_\fp^\ur$-torus $T_\fp$ admits a decomposition, i.e., an exact sequence of $\hat{K}_\fp^\ur$-tori
\begin{equation} \label{decomposition}
1 \to T_{I,\fp} \to T_\fp \to T_{a,\fp} \to 1
\end{equation}
on which $T_{I,\fp}$ is the maximal subtorus of $T_\fp$ splitting over $\hat{K}_\fp^\ur$ 
and $T_{a,\fp}$ is $I_\fp$-anisotropic, i.e., such that
$X^*(T_{a,\fp})^{I_\fp}=\{0\}$.

We denote by $\lT_\fp$ the locally of finite type (lft) NR-model  
of $T_\fp$ defined over $\Sp \hat{\CO}_\fp^\sh$ (see \ref{section2}).  
Let $j_*$ be the functor taking algebraic $\hat{K}_\fp^\ur$-tori to their lft-N\'eron models.  
Since $T_{I,\fp}^\ur$ is $\hat{K}_\fp^\ur$-split, we have $R^1 j_* = 0$ 
(cf.\ the beginning of the proof of III.C.10 in \cite{Mil2}).  
Thus the exact sequence (\ref{decomposition}) can be extended to
\begin{equation} \label{NR models decomposition}
1 \to \lT_{I,\fp} \to \lT_\fp  \to \lT_{a,\fp} \to 1.
\end{equation}
According to \cite[Proposition~4.2(b)]{LL} 
the groups of $k_\fp^s$-points of the connected components of the reductions of these models
fit into the exact sequence 
$$ 1 \to \ov{T_{I,\fp}}^0(k_\fp^s) \to \ov{T}_\fp^0(k_\fp^s) \to \ov{T_{a,\fp}}^0(k_\fp^s) \to 1. $$
As $k_\fp^s$ is algebraically closed, this sequence implies the corresponding exact sequence of $k_\fp^s$-schemes
$$ 1 \to \ov{T_{I,\fp}}^0           \to \ov{T}_\fp^0         \to \ov{T_{a,\fp}}^0          \to 1. $$
Notice that the identity components of the lft NR-models coincide with the ones of the finite type models. 
Thus the reduction preimages of the latter $k_\fp^s$-schemes, 
embedded in the $\hat{\CO}_\fp^\sh$-schemes in sequence (\ref{NR models decomposition}), 
yield the exact sequence of the identity components over $\hat{\CO}_\fp^\sh$
\begin{equation} \label{NR models identity components decomposition}
1 \to \un{T_I}_\fp^0 \to \un{T}_\fp^0  \to \un{T_a}_\fp^0 \to 1.
\end{equation}
Now let $\pi_\fp:T_\fp \to T'_\fp$ be an isogeny of $\hat{K}_\fp$-tori.  
Denote by $T_\fp^\ur$ and $(T'_\fp)^\ur$ these tori tensored with $\hat{K}_\fp^\ur$. 
Then applying the decomposition (\ref{NR models identity components decomposition}) on both $T_\fp^\ur$ and $(T'_\fp)^\ur$ 
results in the exact sequences
\begin{align} \label{isogeny sequences decompositions} 
1 \to \un{T^\ur_I}_\fp^0       & \to \un{T^\ur}_\fp^0      \to \un{T^\ur_a}_\fp^0    \to 1, \\
1 \to \un{(T'_I)^\ur}_\fp^0  & \to \un{(T')^\ur}_\fp^0   \to \un{(T'_a)^\ur}_\fp^0   \to 1. \nonumber
\end{align}
If we show that the left-hand and right-hand groups in the upper sequence surject onto the corresponding groups in the lower one, 
then surjection of the middle groups will follow. 
On the left hand side $T_{I,\fp}^\ur$ and $(T'_{I,\fp})^\ur$ are isogenous and $\hat{K}_\fp^\ur$-split. 
Then $\pi_I := \ker[T_I^\ur \surj (T'_I)^\ur]$ is a finite $\hat{K}_\fp^\ur$-split group of multiplicative type. 
Thus the Kummer exact sequence of $\hat{K}_\fp^\ur$-schemes
$$ 1 \to \pi_I \to T_{I,\fp}^\ur \to (T'_{I,\fp})^\ur \to 1 $$ 
extends to the exact sequence of corresponding schemes over $\hat{\CO}_\fp^\sh$
$$ 1 \to \un{\pi_I} \to \un{T_I^\ur}_\fp \to \un{(T'_I)^\ur}_\fp \to 1 $$
showing the desired surjection on the left-hand side (notice that both $\un{T_I^\ur}_\fp$ and $\un{(T'_I)^\ur}_\fp$ split 
over $\hat{\CO}_\fp^\sh$ thus connected, i.e. coincide with their identity component  
(see Remark \ref{G = G0 in unramified extension}). 

Both groups $T_{a,\fp}^\ur$ and $(T'_{a,\fp})^\ur$ on the right-hand side 
of sequences (\ref{isogeny sequences decompositions}) are $I_\fp$-anisotropic. 
Therefore their NR-models coincide with the finite type (classical) N\'eron model. 
In that case, according to \cite[Section~7.3, Proposition~6]{BLR}, the $\hat{K}_\fp^\ur$-isogeny $T_{a,\fp}^\ur \to (T'_{a,\fp})^\ur$ 
extends to a $\hat{\CO}_\fp^\sh$-isogeny $\un{T_a^\ur}_\fp \to \un{(T'_a)^\ur}_\fp$, 
such that the surjection holds for the identity components, see Definition 4
of {\em loc.\ cit.}
Hence we deduce the surjection $ \un{T^\ur}_\fp^0 \surj \un{(T')^\ur}_\fp^0$.

Further, as the degree of the latter $\hat{\CO}_\fp^\sh$-isogeny is prime to $\text{char}(\hat{K}_\fp)$ (see \ref{section1}),  
its kernel $\un{F^\ur}_\fp$ has a smooth reduction as well. 
Thus the exact sequence of the reduction groups over the algebraically closed residue field $k_\fp^s$
$$ 1 \to \ov{F^\ur}_\fp(k_\fp^s) \to \ov{T^\ur}^0_\fp(k_\fp^s) \to \ov{(T')^\ur}_\fp^0(k_\fp^s) \to 1$$ 
implies the exactness of the reduction preimage groups of $\hat{\CO}_\fp^\sh$-points
\begin{equation*} 
1 \to \un{F^\ur}_\fp(\hat{\CO}_\fp^\sh) \to \un{T^\ur}^0_\fp(\hat{\CO}_\fp^\sh) \to \un{(T')^\ur}^0_\fp(\hat{\CO}_\fp^\sh) \to 1.\qedhere 
\end{equation*}
\end{proof}

\begin{cor} \label{surjection of local universal cover}
The homomorphism of $\hat{\CO}_\fp$-schemes $\un{G^\sc}_\fp \to \un{G}_\fp^0$ is surjective. 
\end{cor}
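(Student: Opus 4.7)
The plan is to deduce the corollary from Proposition~\ref{tori surjection} by means of the Bruhat--Tits factorization of parahoric subgroups already employed in Proposition~\ref{volume of identity component}. Both $\un{G^\sc}_\fp$ and $\un{G}_\fp^0$ are smooth affine $\hat{\CO}_\fp$-group schemes of the same relative dimension $\dim G_\fp$, so the surjectivity of $\un{G^\sc}_\fp\to\un{G}_\fp^0$ as a morphism of schemes can be tested fibrewise on geometric points. The generic fiber is the separable isogeny $\pi_\fp:G^\sc_\fp\to G_\fp$, which is surjective as the universal cover; and by smoothness together with the surjectivity of reduction \cite[Proposition~2.3/5]{BLR}, the geometric special fiber is handled once surjectivity of $\un{G^\sc}_\fp(\hat{\CO}_\fp^\sh)\to\un{G}_\fp^0(\hat{\CO}_\fp^\sh)$ is established.

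To obtain the latter I base-change to $\hat{\CO}_\fp^\sh$. By Steinberg's theorem $G_\fp\otimes_{\hat{K}_\fp}\hat{K}_\fp^\ur$ is quasi-split, so \cite[Corollary~4.6.7]{BT2} yields the factorizations
\begin{align*}
\un{G^\sc}_\fp(\hat{\CO}_\fp^\sh) &= \un{(T^\sc)^\ur}^0_\fp(\hat{\CO}_\fp^\sh)\cdot\fX(\hat{\CO}_\fp^\sh), \\
\un{G}_\fp^0(\hat{\CO}_\fp^\sh)   &= \un{T^\ur}^0_\fp(\hat{\CO}_\fp^\sh)\cdot\fX(\hat{\CO}_\fp^\sh),
\end{align*}
in which the root group scheme $\fX$ is common to both sides, as noted in the paragraph preceding Proposition~\ref{tori surjection}, and the extended homomorphism maps $\fX$ to itself identically. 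Proposition~\ref{tori surjection} applied to the separable isogeny $T^\sc_\fp\to T_\fp$ of Cartan subgroups then supplies the surjection $\un{(T^\sc)^\ur}^0_\fp(\hat{\CO}_\fp^\sh)\surj\un{T^\ur}^0_\fp(\hat{\CO}_\fp^\sh)$ on the torus factors, and multiplying by the identity on $\fX(\hat{\CO}_\fp^\sh)$ delivers the required surjection on $\hat{\CO}_\fp^\sh$-points.

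The main obstacle is justifying the Bruhat--Tits factorization in the present generality, since $G_\fp$ is only assumed isotropic and not necessarily quasi-split; this is precisely why the computation is carried out over $\hat{K}_\fp^\ur$, where quasi-splitness is automatic and the shared root datum of $G^\sc_\fp$ and $G_\fp$ forces the root group scheme $\fX$ underlying the two parahoric models to coincide, because the isogeny $\pi_\fp$ restricts to an isomorphism on each individual root subgroup (as invoked already in the discussion leading up to Proposition~\ref{tori surjection}). Once surjectivity is in place on every geometric fiber, flatness---and hence the fppf surjectivity of the morphism of $\hat{\CO}_\fp$-schemes---follows formally from the smoothness of source and target.
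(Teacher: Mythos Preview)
Your proof is correct and takes essentially the same approach as the paper: both obtain the surjection $\un{G^\sc}_\fp(\hat{\CO}_\fp^\sh)\surj\un{G}_\fp^0(\hat{\CO}_\fp^\sh)$ by combining Proposition~\ref{tori surjection} with the Bruhat--Tits factorization $\un{G}_\fp^0(\hat{\CO}_\fp^\sh)=\un{T}_\fp^0(\hat{\CO}_\fp^\sh)\cdot\fX(\hat{\CO}_\fp^\sh)$, and then descend to scheme-theoretic surjectivity. The only difference is in that last descent step---the paper invokes the coordinate-ring characterization \cite[Proposition~1.7.6]{BT2}, while you check surjectivity fibrewise via reduction---and one small point you leave implicit: writing the factorization for $\un{G^\sc}_\fp(\hat{\CO}_\fp^\sh)$ rather than for its identity component uses that $\un{G^\sc}_\fp$ is already connected, which the paper makes explicit by citing \cite[\S~3.5.2]{Tits}.
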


\begin{proof}
Our assumption $(\text{char}(K),F)=1$ in Section \ref{section1} 
implies that the isogeny $\pi_\fp: T^\sc_\fp \to T_\fp$ is separable at any $\fp$.  
As $\un{G}_\fp^0(\hat{\CO}_\fp^\sh) = \un{T}_\fp^0(\hat{\CO}_\fp^\sh) \fX(\hat{\CO}_\fp^\sh)$, 
the surjection of groups of $\hat{\CO}_\fp^\sh$-points in Proposition \ref{tori surjection} can be extended to $\un{\pi}:\un{G^\sc}^0_\fp(\hat{\CO}_\fp^\sh) \surj \un{G}_\fp^0(\hat{\CO}_\fp^\sh)$. 
As $G_\fp^\sc$ is simply connected, $\un{G^\sc}_\fp$ has a connected special fiber (see \cite[\S~3.5.2]{Tits}).   
By \cite[Proposition~1.7.6]{BT2}, we know that the coordinate ring representing $\un{G}_\fp$ is 
$$ \hat{\CO}_\fp[\un{G}_\fp] = \left\{ f \in \hat{K}_\fp[G_\fp] : f(\un{G}_\fp(\CO^\sh_\fp)) \subset \CO^\sh_\fp \right \} \subset \hat{K}_\fp[G_\fp].$$
As $\un{\pi}(\un{G^\sc}_\fp(\hat{\CO}_\fp^\sh)) = \un{G}^0_\fp(\hat{\CO}_\fp^\sh)$,   
any function $f \in \hat{\CO}_\fp[\un{G}^0_\fp]$ satisfies
$$ f \circ \un{\pi}(\un{G^\sc}_\fp(\hat{\CO}_\fp^\sh)) \subset f(\un{G}_\fp(\hat{\CO}_\fp^\sh)) \subset \hat{\CO}_\fp^\sh, $$  
thus $f \circ \un{\pi} \in \hat{\CO}_\fp[\un{G^\sc}_\fp]$ yielding the surjection of the contravariant functor of schemes. 
\end{proof}

\begin{lem} \label{volumes of compact groups}
$\varpi_\fp(\un{G^\sc}_\fp(\hat{\CO}_\fp)) = \varpi_\fp(\un{G}_\fp^0(\hat{\CO}_\fp))$. 
\end{lem}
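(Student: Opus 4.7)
The plan is to compute both sides by Proposition \ref{volume of identity component} and reduce the equality to a count of $k_\fp$-points in the special fibers. Since $G^\sc_\fp$ is simply connected, its Bruhat--Tits model $\un{G^\sc}_\fp$ has connected special fiber by \cite[\S~3.5.2]{Tits}, hence coincides with $\un{G^\sc}_\fp^0$. Normalizing $\varpi_\fp$ on $G^\sc_\fp(\hat{K}_\fp)$ so that it is compatible (under pullback along the \'etale isogeny $\pi_\fp$) with the fixed Haar measure on $G_\fp(\hat{K}_\fp)$ — which ensures that the scalar in Proposition \ref{volume of identity component} is common to both groups — Proposition \ref{volume of identity component} applied to $G^\sc_\fp$ and to $G_\fp$ gives
$$ \varpi_\fp(\un{G^\sc}_\fp(\hat{\CO}_\fp)) = |\ov{G^\sc}_\fp(k_\fp)| \cdot |k_\fp|^{-\dim G^\sc_\fp}, \qquad \varpi_\fp(\un{G}_\fp^0(\hat{\CO}_\fp)) = |\ov{G}_\fp^0(k_\fp)| \cdot |k_\fp|^{-\dim G_\fp}. $$
Since $\pi_\fp$ is an isogeny, $\dim G^\sc_\fp = \dim G_\fp$, so the lemma reduces to the point-count identity $|\ov{G^\sc}_\fp(k_\fp)| = |\ov{G}_\fp^0(k_\fp)|$.

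To establish this identity, Corollary \ref{surjection of local universal cover} produces a surjection $\un{\pi}: \un{G^\sc}_\fp \twoheadrightarrow \un{G}_\fp^0$ of smooth $\hat{\CO}_\fp$-group schemes whose kernel $\un{F}_\fp$ is finite flat; the standing hypothesis $(\mathrm{char}(K),|F|)=1$ then forces $\un{F}_\fp$ to be \'etale, so its reduction $\ov{\un{F}_\fp}$ is a finite \'etale commutative $k_\fp$-group. Passing to special fibers and taking $\mathrm{Gal}(\ov{k}_\fp/k_\fp)$-cohomology yields the exact sequence
$$ 1 \to \ov{\un{F}_\fp}(k_\fp) \to \ov{G^\sc}_\fp(k_\fp) \to \ov{G}_\fp^0(k_\fp) \to H^1(k_\fp,\ov{\un{F}_\fp}) \to H^1(k_\fp,\ov{G^\sc}_\fp). $$
Lang's theorem applied to the connected smooth $k_\fp$-group $\ov{G^\sc}_\fp$ gives $H^1(k_\fp,\ov{G^\sc}_\fp) = 1$, while for the finite abelian Galois module $M := \ov{\un{F}_\fp}(\ov{k}_\fp)$ the endomorphism $1 - \mathrm{Frob}$ of $M$ has kernel and cokernel of equal order, hence $|H^0(k_\fp,\ov{\un{F}_\fp})| = |H^1(k_\fp,\ov{\un{F}_\fp})|$. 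Multiplicativity of cardinalities along the exact sequence then yields $|\ov{G^\sc}_\fp(k_\fp)| = |\ov{G}_\fp^0(k_\fp)|$, which combined with the first paragraph proves the lemma.

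The main delicate point is arranging that the Haar-measure normalizations on $G^\sc_\fp(\hat{K}_\fp)$ and on $G_\fp(\hat{K}_\fp)$ be compatible, so that the implicit scalars in Proposition \ref{volume of identity component} coincide on the two sides; this is done by pulling back a fixed nonzero invariant top form on $G_\fp$ along the \'etale cover $\pi_\fp$, which is nonvanishing because $\pi_\fp$ is separable. Once this compatibility is in place, the lemma becomes the elementary Lang-plus-Herbrand cardinality identity displayed above.
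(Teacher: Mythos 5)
Your proof is correct and follows the same route as the paper: both reduce, via Proposition~\ref{volume of identity component}, to the point-count identity $|\ov{G^\sc}_\fp(k_\fp)| = |\ov{G}_\fp^0(k_\fp)|$ for the isogenous connected special fibers. The paper simply cites \cite[\S~16.8]{Bor} for this equality, whereas you prove it directly from Lang's theorem together with $|H^0(k_\fp,M)|=|H^1(k_\fp,M)|$ for a finite Galois module $M$ over the procyclic group $\mathrm{Gal}(\ov{k}_\fp/k_\fp)$; your explicit remark on matching the two Haar normalizations by pulling back a top form along the separable isogeny $\pi_\fp$ usefully spells out what the paper leaves implicit (cf.\ Remark~\ref{local measure up to scalar mult}).
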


\begin{proof}
Consider the following exact sequences, obtained by the reduction of groups of points
\begin{align*} 
1 \to \un{G^\sc}_\fp^1(\hat{\CO}_\fp) \to \un{G^\sc}_\fp^0(\hat{\CO}_\fp) &\stackrel{\red}{\longrightarrow} \ov{G^\sc}_\fp^0(k_\fp) \to 1, \\ \nonumber 
1 \longrightarrow \un{G}_\fp^1(\hat{\CO}_\fp) \longrightarrow \un{G}_\fp^0(\hat{\CO}_\fp) &\stackrel{\red}{\longrightarrow} \ov{G}_\fp^0(k_\fp) \to 1. \nonumber 
\end{align*}
Both kernels are homeomorphic to $\fp^{\dim G_\fp}$ (cf.\ the proof of Proposition \ref{volume of identity component}) 
thus sharing both the same volume with respect to $\varpi_\fp$, namely $q^{-\dim G_\fp}$.  
Further, as the residue field $k_\fp$ is finite and the reductions $\ov{G^\sc}_\fp^0 = \ov{G^\sc}_\fp$ 
and $\ov{G}_\fp^0$ are connected and $k_\fp$-isogeneous, 
they share the same number of rational $k_\fp$-points (see \cite[\S~16.8]{Bor}). 
Now the claim follows from Proposition \ref{volume of identity component}.  
\end{proof}

\bk


\section{The Tamagawa number of semisimple groups} \label{section4}
We return to the definition of $G$ over the global field $K$ as introduced in Section~\ref{section1}. 
Let $\om$ be a differential $K$-form on $G$ of highest degree. 
It induces a Haar measure on the adelic group $G(\A)$ of $G$, 
which is unique up to a scalar multiplication. 
Let $\om_\fp$ be the multiplicative Haar measure induced locally by $\om$ at $\fp$.  
The \emph{Tamagawa measure} on $G(\A)$ is defined as 
$$ \tau = q^{-(g-1)\dim G} \prod_\fp \om_\fp $$
where $g$ is the genus of $C$. 

\begin{remark} \label{local measure up to scalar mult}
Since the multiplicative Haar measure on $G_\fp(\hat{K}_\fp)$ at any $\fp$ is unique up to a scalar multiplication, 
there exists $\lm_\fp \in \hat{K}_\fp^\times$ such that $\om_\fp = \lm_\fp\varpi_\fp$ 
(see notation in Section \ref{section3})      
and so Lemma \ref{volumes of compact groups} remains true after replacing $\varpi_\fp$ with $\om_\fp$.   
\end{remark}

Due to the product formula the measure $\tau$ does not depend on the choice of $\om$, 
i.e., for each $\lambda \in K^\times$ the volume forms $\om$ and $\lm\om$ 
yield identical Haar measures (cf.\ \cite[2.3.1]{Weil}). 
Therefore $\tau$ is well defined and the following quantity is meaningful. 
Identifying $K$ with its diagonal embedding in $\A$ and consequently $G(K)$ with its diagonal embedding in $G(\A)$, 
we consider the following arithmetic invariant of $G$:  

\begin{definition} \label{Tamagawa number semisimple groups}
The \emph{Tamagawa number} $\tau(G)$ of $G$ is the volume of $G(\A)/G(K)$ 
with respect to the Tamagawa measure $\tau$.
\end{definition} 

Recall that all discrete valuations of $K$ are non-archimedean. 
For any finite set $S$ of primes of $K$, the \emph{ring of $S$-adeles} is:
\begin{equation*}
\A_S := \left \{ (x_\fp)_{\fp \notin S} : 
x_\fp \in \hat{\CO}_\fp \ \text{for almost all} \ \fp \right \} \subset \prod\limits_{\fp \notin S} \hat{K}_\fp. 
\end{equation*}
We also define: 
\begin{equation*} 
\A(S) := \prod_{\fp \in S} \hat{K}_\fp \times \prod_{\fp \notin S} \hat{\CO}_\fp. 
\end{equation*}

For any prime $\fp$ let $\un{G}_\fp(\hat{\CO}_\fp)$ be the maximal subgroup of $G_\fp(\hat{K}_\fp)$ 
w.r.t. some special point $x$ as defined in (\ref{Local invariants}).  
With the notation as in Section~\ref{section1} we set 
\begin{equation*} 
G_S := \prod_{\fp \in S} G_\fp(\hat{K}_\fp), \ \ G(\A(S)) := G_S \times \prod_{\fp \notin S} \un{G}_\fp(\hat{\CO}_\fp). 
\end{equation*}

\begin{definition}[{\cite[p.~187]{Kneser:1966}, \cite{Platonov:1969}}] 
We say that $G$ satisfies the \emph{strong approximation} property w.r.t. a finite set of primes $S$, 
if the diagonal embedding $G(K) \hookrightarrow G(\A_S)$ is dense, 
or, equivalently, if $G_S \cdot G(K)$ is dense in $G(\A)$. 
If $|S|=1$ we call it the \emph{absolute strong approximation} property. 
\end{definition}

\begin{theorem}[{\cite[Theorem A]{Pr1}}] \label{non-compact}
Let $G$ be a simply connected $K$-group. 
If the topological group $G_S$ is non-compact w.r.t. to a finite set of primes $S$, 
then $G_S \cdot G(K)$ is dense in $G(\A)$.   
\end{theorem}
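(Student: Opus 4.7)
The plan is to reduce to absolutely almost simple $G$ and then split into the $K$-isotropic and $K$-anisotropic cases.

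First I would decompose $G$ as a direct product of its almost $K$-simple factors, which reduces the claim to almost $K$-simple $G$ since strong approximation for a direct product is equivalent to strong approximation on each factor (and the non-compactness hypothesis distributes over the factors, though one must place $v\in S$ for each factor where it appears). Then I would write each factor in the form $R_{L/K}(H)$ for $L/K$ a finite separable extension and $H$ absolutely almost simple and simply connected over $L$; under the canonical identification $G(\A_K) = H(\A_L)$ and $G_S = H_{S'}$ where $S'$ consists of places of $L$ above $S$, the theorem for $G$ over $K$ becomes the theorem for $H$ over $L$. Non-compactness of $G_S$ implies non-compactness of $H_{S'}$, which by a theorem of Bruhat--Tits is equivalent to $H$ being $L_w$-isotropic at some $w\in S'$.

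In the $K$-isotropic sub-case I would run the classical Kneser argument. Pick a proper parabolic $K$-subgroup $P=MU$; the $K$-split unipotent radical $U$ is a successive extension of $\BG_a$'s, so strong approximation for $U$ with respect to any non-empty set of places is a direct consequence of strong approximation for $\BG_a$. Combined with the Bruhat decomposition $G=U^-MU$ and the Kneser--Tits property for simply connected isotropic groups (which holds over the function fields we consider), one shows that $G(K)$ is generated by $U(K)$ and $U^-(K)$, and then uses the Bruhat decomposition together with open-ness of the big cell to transfer density from the unipotent radicals to all of $G(\A)$.

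The main obstacle is the $K$-anisotropic sub-case, where $G$ has no proper parabolic $K$-subgroup and the above global unipotent argument is unavailable. The only lever left is local: by assumption there exists $v\in S$ with $G$ isotropic over $\hat{K}_v$, so the Bruhat--Tits building of $G_v(\hat{K}_v)$ is non-trivial and $G_v(\hat{K}_v)$ contains unbounded unipotent elements. I would analyse $H:=\overline{G(K)\cdot G_S}$ as a closed subgroup of $G(\A)$, aiming to show $H=G(\A)$ in two steps: (i) prove $H$ is normal in $G(\A)$ (using the normality of $G(K)\cdot G_S$ together with centrality of any discrepancy, exploiting that $G$ is simply connected), and then (ii) apply a normal-subgroup theorem of Margulis--Platonov type together with a local ergodicity/density argument at $v$ coming from the non-compact action on the local building. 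The hard part is step (ii): ruling out proper closed normal subgroups of $G(\A)$ that meet $G_S$ in a non-compact subgroup requires precisely the kind of delicate structural analysis carried out by Prasad in \cite{Pr1}, and my attempt would ultimately reproduce the essentials of that argument.
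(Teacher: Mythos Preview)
The paper does not prove this theorem at all: it is quoted verbatim from \cite[Theorem~A]{Pr1} and used as a black box, with no argument given. There is therefore nothing in the paper to compare your proposal against.

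That said, your outline is a reasonable sketch of the architecture of Prasad's original proof --- the reduction to absolutely almost simple groups via restriction of scalars, and the bifurcation into the $K$-isotropic case (handled by a Kneser-type argument through unipotent radicals) versus the $K$-anisotropic case. Two caveats are worth flagging. First, in your isotropic branch you invoke the Kneser--Tits property over $K$; at the time of \cite{Pr1} this was not available in full generality over function fields, and Prasad's argument does not rely on it in the form you state. Second, in the anisotropic branch your step~(i) asserts normality of $\overline{G(K)\cdot G_S}$ in $G(\A)$ as if it were immediate, but $G(K)\cdot G_S$ is not a priori normal and the parenthetical justification you give (``centrality of any discrepancy'') is not an argument. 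As you yourself concede, the anisotropic case is where the genuine content lies, and your sketch there amounts to a pointer back to \cite{Pr1} rather than an independent proof.
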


\begin{theorem} [{\cite[Thm.~3.2~3)]{Tha}, \cite[Prop.~8.8]{PR} in the number field case}] \label{class group}   
Let $G$ be a connected reductive $K$-group such that the simply connected
covering of the derived subgroup of $G$ has the strong approximation property w.r.t. a finite set of primes $S$. 
Then $G(\A(S))G(K)$ is a normal subgroup of $G(\A)$ with finite abelian quotient, 
the {\em $S$-class group} $\mathrm{Cl}_S(G) = G(\A)/G(\A(S))G(K)$ of cardinality 
$h_S(G) = |\mathrm{Cl}_S(G)|$. 
\end{theorem}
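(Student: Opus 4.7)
My plan is to reduce to the (well-known) case of the maximal commutative quotient torus $T := G/G^{\mathrm{der}}$, where $G^{\mathrm{der}}$ is the derived subgroup of $G$. Writing $\pi: G \twoheadrightarrow T$ for the projection, the exact sequence $1 \to G^{\mathrm{der}} \to G \stackrel{\pi}{\to} T \to 1$ induces a homomorphism
$$
\pi_*: G(\A)/G(\A(S))G(K) \longrightarrow T(\A)/T(\A(S))T(K).
$$
The target is a finite abelian group: for a $K$-torus $T$ over a global function field, the $S$-class group is well-known to be finite (a consequence of the finiteness of the Picard group of $\CO_S$ combined with the cocharacter lattice structure of $T$ via Galois cohomology), and abelianness is automatic since $T$ is commutative. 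Hence it suffices to show that $\pi_*$ is surjective with finite kernel.

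For surjectivity, one needs $\pi: G(\A) \to T(\A)$ to be essentially surjective; the local obstructions sit in $H^1(\hat{K}_\fp, G^{\mathrm{der}})$, which vanishes at places of good reduction (by Bruhat--Tits together with Lang's theorem on connected reductions) and contributes only finitely many further cosets at the remaining places, corrections which are absorbable into the finite torus class group. For the kernel, take $g \in G(\A)$ with $\pi(g) = t_0 t_K \in T(\A(S))T(K)$ and modify $g$ by lifts $\gamma \in G(K)$ of $t_K$ and $\xi \in G(\A(S))$ of $t_0$ (which exist up to a finite correction from the locally-trivial subgroup of $H^1(K, G^{\mathrm{der}})$); this reduces the kernel to a quotient of $G^{\mathrm{der}}(\A) / (G^{\mathrm{der}}(\A) \cap G(\A(S))G(K))$. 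Now the strong approximation hypothesis enters: the central isogeny $\rho: G^\sc \twoheadrightarrow G^{\mathrm{der}}$ has finite \'etale kernel (since $(\mathrm{char}(K),|F|)=1$), and $G^\sc(\A) = \ov{G^\sc_S \cdot G^\sc(K)}$ pushes forward to yield $\rho(G^\sc(\A)) \subseteq \ov{G_S \cdot G(K)} = G(\A(S))G(K)$, where the last equality uses openness of $G(\A(S))$ (making $G(\A(S))G(K)$ itself open and hence closed). Thus the kernel of $\pi_*$ is a quotient of the adelic cokernel $G^{\mathrm{der}}(\A)/\rho(G^\sc(\A))$.

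The main obstacle is to prove that this adelic cokernel, which embeds into the restricted product $\prod'_\fp H^1(\hat{K}_\fp, \ker\rho)$, is genuinely finite. This reduces to showing $H^1(\hat{K}_\fp, \ker\rho) = 0$ at all but finitely many places, a consequence of the smoothness of the integral model of $\ker\rho$ over $\hat{\CO}_\fp$ (guaranteed by $(\mathrm{char}(K),|F|)=1$) and of Lang's theorem applied to the reductions of $\un{G^\sc}$ and $\un{G^{\mathrm{der}}}$, yielding trivial unramified local cohomology away from a finite exceptional set. Assembling these ingredients, $G(\A(S))G(K)$ is an open (hence closed) subgroup of $G(\A)$ --- normal because $G^{\mathrm{der}}(\A)$ is normal and the projection to the commutative torus class group detects the cokernel --- whose quotient $\mathrm{Cl}_S(G)$ is a finite abelian group, as claimed.
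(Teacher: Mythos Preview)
The paper does not prove this theorem; it is quoted verbatim from \cite{Tha} and \cite[Prop.~8.8]{PR}, so there is no in-paper argument to compare against. Your overall architecture---project to the torus quotient $T=G/G^{\mathrm{der}}$, use the known finiteness of the torus $S$-class group, and bound the kernel via strong approximation for $G^\sc$---is the standard one and is essentially what those references do.

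There is, however, a genuine error in your finiteness step. You assert that the adelic cokernel $G^{\mathrm{der}}(\A)/\rho(G^\sc(\A))$ is finite because ``$H^1(\hat K_\fp,\ker\rho)=0$ at all but finitely many places,'' invoking smoothness of $\ker\rho$ and Lang's theorem. Both the conclusion and the justification are false. Lang's theorem applies to \emph{connected} groups; a nontrivial finite \'etale $\ker\rho$ is disconnected, and for instance $H^1(\hat K_\fp,\mu_n)\cong \hat K_\fp^\times/(\hat K_\fp^\times)^n$ is nonzero at \emph{every} place. Consequently $G^{\mathrm{der}}(\A)/\rho(G^\sc(\A))$ is typically infinite. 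The correct argument does not attempt to bound this cokernel directly: one uses instead that for $\fp\notin S$ the $\fp$-component of any adele already lies in $\un{G}_\fp(\hat\CO_\fp)\subset G(\A(S))$, so the class $\psi_\A(g)$ differs from $\psi_\A(G(\A(S)))$ only at the finitely many places where $g_\fp\notin\un{G}_\fp(\hat\CO_\fp)$; each such place contributes a finite local $H^1$, and finiteness follows. Your abelianness argument is also incomplete: that $\ker(\pi_*)$ and the target of $\pi_*$ are abelian does not force the source to be abelian. The clean route is to observe that $\rho(G^\sc(\A))$ is normal in $G(\A)$ with abelian quotient (the boundary map $\psi_\A$ is a homomorphism into an abelian group with kernel $\rho(G^\sc(\A))$, using $H^1(\hat K_\fp,G^\sc)=1$), and strong approximation gives $\rho(G^\sc(\A))\subseteq G(\A(S))G(K)$; normality and abelianness of $\mathrm{Cl}_S(G)$ then follow immediately.
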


We choose an arbitrary closed point $\iy$ of $C$ to be the point at infinity, and define: 
$$ \A_\iy := \A(\{\iy\}), \ \ G(\A_\iy) := G_\iy(\hat{K}_\iy) \times \prod_{\fp \neq \iy} \un{G}_\fp(\hat{\CO}_\fp). $$   
The following facts are now deduced from the preceding Theorems in the case of $S=\{\iy\}$: 

\begin{definition} \label{class number of semisimple groups}
There exists a finite set $\{ x_1,..., x_h \} \subset G(\A)$ such that
$$ G(\A) = \bigsqcup\limits_{i=1}^h G(\A_\iy) x_i G(K).$$  
The finite number $h=h_\iy(G)$ is called the \emph{class number} of $G$ 
(see \cite[Satz~7]{Behr:1969}, \cite[Prop.~3.9]{BP}, also \cite[proof of Theorem~2.1]{Bux/Wortman:2011}). 
\end{definition}

\begin{remark} \label{Gsc strong approximation}
As our group $G$ is assumed to be locally isotropic everywhere,  
by Theorem \ref{non-compact} in the case of $S=\{\iy\}$, 
$G^\sc$ admits the absolute strong approximation property implying $h_\iy(G^\sc)=1$. 
\end{remark}

According to Theorem \ref{class group} together with Remark \ref{Gsc strong approximation},  
$G(\A_\iy)G(K)$ is a normal subgroup of $G(\A)$ 
and we may consider the natural epimorphism: 
$$ \varphi: G(\A) / G(K) \surj G(\A) / G(\A_\iy)G(K): \ \forall x \in G(\A): x G(K) \mapsto x G(\A_\iy)G(K) $$
for which 
$$ \ker(\varphi) = \{ x G(K) : x \in G(\A_\iy)G(K) \} = G(\A_\iy)G(K)/G(K) \cong G(\A_\iy) / G(\A_\iy) \cap G(K).  $$
Since all fibers of $\varphi$ are isomorphic to $\ker(\varphi)$ we get a bijection of measure spaces   
\begin{eqnarray} \label{decompositionfirst}
G(\A)/G(K) &\cong & \im(\varphi) \times \ker(\varphi) \\ \notag
           & = & \left( G(\A) / G(\A_\iy)G(K) \right )  \times \left( G(\A_\iy) / G(\A_\iy) \cap G(K) \right) \\ \notag
           &\stackrel{\ref{class group}}{\cong}& \mathrm{Cl}_\iy(G) \times \left( G(\A_\iy)/ \G \right) 
\end{eqnarray}
on which the first factor cardinality is $h_\iy(G)$ and $\G := G(\A_\iy) \cap G(K)$.  
We will next study the volume of the second factor. 

\bk


\section{On the cohomology of $\Oiy$-schemes and relative local covolumes} \label{section5}

The discrete group $\G = G(K) \cap G(\A_\iy)$ consists only of elements over the ring of $\{\iy\}$-integers of $K$, namely:
$$ \Oiy = \{ a \in K \mid v_\fp(a) \geq 0 \ \ \forall \fp \neq \iy \}. $$
So it would be natural to describe it using an $\Oiy$-scheme. Consider its following construction: 
For any $\fp$ let $\wt{G}_\fp$ be the Bruhat-Tits model of $G_\fp$ defined over $\CO_\fp$, i.e. such that: 
\begin{enumerate}
	\item $\wt{G}_\fp \otimes_{\CO_\fp} \hat{K}_\fp = G_\fp$, and: 
	\item $\wt{G}_\fp \otimes_{\CO_\fp} \hat{\CO}_\fp = \un{G}_\fp$. 
\end{enumerate}
According to Proposition D.4(a) in \cite[\S~6.2]{BLR} the patch $(G_\fp,\un{G}_\fp,\tau)$,  
where $\tau$ is the canonical isomorphism $G_\fp \otimes_{K_\fp} \hat{K}_\fp \cong \un{G}_\fp \otimes_{\CO_\fp} \hat{K}_\fp$, 
corresponds uniquely to the $\CO_\fp$-module $\wt{G}_\fp$, in the sense that it covers it with a canonical descent datum. 
Now since $C$ is one dimensional, for any two distinct primes $\fp_1$ and $\fp_2$, 
the product $\CO_{\fp_1} \otimes \CO_{\fp_2}$ is isomorphic to $K$. 
Thus we may Zariski-glue all geometric fibers $\{\Sp \CO_\fp : \fp \neq \iy \}$ along the generic point $\Sp K$, resulting in $\Sp \Oiy$. 
Then the aforementioned patches cover (with descent datum) a unique group scheme $\CG$ over $\Sp \Oiy$. 
Moreover, for any $\fp$, the localization $(\Oiy)_\fp$ is a base change of $\CO_\fp$. 
Thus the bijection $\Sp (\Oiy)_\fp \to \Sp \CO_\fp$ is faithfully flat (see \cite[Thm.~3.16]{Liu}). 
Hence $\CG$ extended to $\Sp \hat{\CO}_\fp$ is smooth by construction so that $\CG$ is smooth at $\fp$ by faithfully flat descent, see \cite[17.7.3]{EGAIV}. 
Its generic fiber is $G$ and it satisfies:
\begin{equation*} 
\CG(\Oiy) = G(K) \cap \prod_{\fp \neq \iy} \un{G}_\fp(\hat{\CO}_\fp) = G(K) \cap G(\A_\iy). 
\end{equation*} 

We denote by $\CG^0$ the subscheme of $\CG$ whose geometric fibers are $\un{G}^0_\fp$. 
The same construction for $G^\sc$ is denoted by $\CG^\sc$. 
The surjectivity at the geometric fibers $\un{G^\sc}_\fp \to \un{G}_\fp^0$ 
(see Lemma~\ref{surjection of local universal cover})  
leads to the surjection $\pi_{\Oiy}:\CG^\sc \to \CG^0$ over $\Sp \CO_{\{\iy\}}$ as \'etale sheaves. 
As we assumed $\text{char}(K)$ to be prime to $|F|$, 
$\F:= \ker[\CG^\sc \to \CG^0]$ is smooth as well 
and we have an exact sequence of $\Oiy$-models: 
\begin{equation} \label{exact sequence of semisimple groups models}
1 \to \F \to \CG^\sc \to \CG^0 \to 1. 
\end{equation}   

Let $\T$ be the subscheme of $\CG$ whose generic fiber is $T$, 
let $\T^0 := \T \cap \CG^0$ and let $\T^\sc$ be its preimage under $\pi_{\Oiy}$ in $\CG^\sc$. 
Being central (as all its geometric fibers), $\F$ is equal to the kernel of the corresponding $\Oiy$-tori-models, 
fitting into the following exact sequence of $\Oiy$-schemes 
\begin{equation} \label{exact sequence of tori models}
1 \to \F \to \T^\sc \to \T^0 \to 1. 
\end{equation}

\begin{lem}\label{H1(Gsc) trivial2} 
$H^1_\et(\Oiy,\CG^\sc)=1$. 
\end{lem}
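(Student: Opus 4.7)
The plan is to identify $H^1_\et(\Oiy,\CG^\sc)$ with the class set $\mathrm{Cl}_\iy(G^\sc)$ and then invoke Remark~\ref{Gsc strong approximation}, which already records that $h_\iy(G^\sc)=1$. The reduction uses a Nisnevich-style local-global gluing: a $\CG^\sc$-torsor over $\Oiy$ that is trivial generically and at every completion $\hat\CO_\fp$ (with $\fp\neq\iy$) is classified by a single adelic double coset, so all such torsors are trivial exactly when that double coset set has one element.

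To apply this, I first verify the two vanishing inputs. At the generic point, Harder's theorem gives $H^1(K,G^\sc)=1$ for simply connected semisimple groups over a global function field (via his Hasse principle and Kneser's vanishing of $H^1(\hat K_\fp,G^\sc_\fp)$ at each local completion). At each $\fp\neq\iy$, the scheme $\un{G^\sc}_\fp$ is smooth with connected special fiber -- the latter because $G^\sc$ is simply connected, by \cite[\S 3.5.2]{Tits}, which has already been invoked in the proof of Corollary~\ref{surjection of local universal cover}. Hensel's lemma then identifies $H^1_\et(\hat\CO_\fp,\un{G^\sc}_\fp)$ with $H^1(k_\fp,\ov{G^\sc}_\fp)$, which vanishes by Lang's theorem since $\ov{G^\sc}_\fp$ is connected and $k_\fp$ is finite.

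Given a $\CG^\sc$-torsor $\mathcal{E}$ over $\Oiy$, the two vanishings let me pick sections $s_K\in\mathcal{E}(K)$ and $s_\fp\in\mathcal{E}(\hat\CO_\fp)$ for every $\fp\neq\iy$. Comparing them via $s_K=g_\fp\cdot s_\fp$ in $\mathcal{E}(\hat K_\fp)$, and noting that $s_K$ is integral at almost all $\fp$, one obtains an adele $(g_\fp)$ in $G^\sc(\A)$, whose $\iy$-component is unconstrained because $\iy\notin\Sp\Oiy$. Varying $s_K$ modifies this adele by $G^\sc(K)$ on one side, while varying the $s_\fp$ (and the arbitrary choice at $\iy$) modifies it by $G^\sc(\A_\iy)$ on the other. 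This produces a well-defined map
$$ H^1_\et(\Oiy,\CG^\sc)\;\longrightarrow\;\mathrm{Cl}_\iy(G^\sc)\;=\;G^\sc(\A_\iy)\,\backslash\,G^\sc(\A)\,/\,G^\sc(K), $$
which a routine unwinding shows to be bijective. Combining with $h_\iy(G^\sc)=1$ from Remark~\ref{Gsc strong approximation} yields the lemma.

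The step requiring the most care is this last gluing: verifying that the tuple $(g_\fp)$ is genuinely an adele and that the induced map to the class set is a bijection of pointed sets. This is an instance of Nisnevich's theorem for smooth affine group schemes over Dedekind domains, and in our setting may be established concretely by faithfully flat descent along the cover $\Sp\Oiy=\bigcup_{\fp\neq\iy}\Sp(\Oiy)_\fp\,\cup\,\Sp K$, paralleling the gluing that produced $\CG^\sc$ itself at the start of Section~\ref{section5}.
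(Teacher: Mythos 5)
Your proposal is correct and follows essentially the same route as the paper: both rest on identifying $H^1_\et(\Oiy,\CG^\sc)$ with the class set via a Nisnevich local-global sequence and then appealing to Harder's vanishing of $H^1(K,G^\sc)$ together with $h_\iy(G^\sc)=1$ from strong approximation. The only difference is in presentation: the paper cites Nisnevich's exact sequence $1 \to \mathrm{Cl}_\iy(G^\sc) \to H^1_\et(\Oiy,\CG^\sc) \to H^1(K,G^\sc)$ as a black box, whereas you unfold it by gluing generic and local trivializations, which requires also recording the local input $H^1_\et(\hat\CO_\fp,\un{G^\sc}_\fp)=1$ (Hensel plus Lang plus connectedness of the special fiber) that Nisnevich's theorem packages internally.
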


\begin{proof} 
According to Nisnevitch (\cite[3.6.2]{Nis}) we have the following exact sequence 
\begin{equation*} 
1 \to \mathrm{Cl}_\iy(G^\sc) \to H^1_\et(\Oiy,\CG^\sc) \to H^1(K,G^\sc(K^s)) 
\end{equation*}
on which in our case $\mathrm{Cl}_\iy(G^\sc)$ is trivial (see Remark \ref{Gsc strong approximation}) 
and the latter group in the sequence is trivial as well due to Harder's result (see \cite[Satz~A]{Harder:1975}).  
The claim follows.  
\end{proof}

\begin{lem} \label{Fiscoh2} 
Let $\pi_{\Oiy} : \CG^\sc(\Oiy) \to \CG^0(\Oiy)$. Then: 
$$ 
j_\iy(G):=\fc{|\cok(\pi_{\Oiy})|}{|\ker(\pi_{\Oiy})|} = \fc{|H^1_\et(\Oiy,\F)|}{|\F(\Oiy)|}. 
$$
If in particular $G$ is quasi-split and $C$ is of genus $g=0$, 
then $j_\iy(G)=1$ which means that the discrete groups $\G^\sc$ and $\G^0$ are bijective.  
\end{lem}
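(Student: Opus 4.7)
The plan for the proof has two parts: derive the explicit ratio formula from a cohomological long exact sequence (standard), and then use quasi-triviality together with the $g=0$ hypothesis to show the vanishing.

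For the first identity, the plan is to feed the short exact sequence (\ref{exact sequence of semisimple groups models}) of smooth $\Oiy$-group schemes into \'etale cohomology on $\Sp \Oiy$. Since $\F$ is central in $\CG^\sc$, the connecting map is defined on all sections and one obtains the exact sequence
\begin{equation*}
1 \to \F(\Oiy) \to \CG^\sc(\Oiy) \stackrel{\pi_{\Oiy}}{\longrightarrow} \CG^0(\Oiy) \stackrel{\delta}{\longrightarrow} H^1_\et(\Oiy,\F) \longrightarrow H^1_\et(\Oiy,\CG^\sc).
\end{equation*}
By the immediately preceding Lemma \ref{H1(Gsc) trivial2}, the rightmost term is trivial, so $\delta$ is surjective. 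One then reads off $\ker(\pi_{\Oiy}) \cong \F(\Oiy)$ and $\cok(\pi_{\Oiy}) \cong H^1_\et(\Oiy,\F)$, both finite since $\F$ is a finite \'etale $\Oiy$-group scheme. Dividing cardinalities yields the asserted formula.

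For the ``in particular'' claim, I would pass to the torus exact sequence (\ref{exact sequence of tori models}), using that $\F$ already sits inside $\T^\sc$. By Remark \ref{quasi-trivial Cartan}, the hypothesis that $G$ be quasi-split forces $T^\sc$ to be quasi-trivial, so its $\Oiy$-model is an induced torus $\T^\sc \cong \prod_i R_{\CO'_i/\Oiy}(\BG_m)$ for finite \'etale extensions $\CO'_i/\Oiy$. Shapiro's lemma then identifies $H^1_\et(\Oiy, \T^\sc) \cong \bigoplus_i \Pic(\CO'_i)$, expressing the relevant cohomology as a sum of Picard groups of affine open subschemes of smooth curves over $\BF_q$. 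Under $g = 0$, $\Sp \Oiy$ is itself an affine open subscheme of $\BP^1_{\BF_q}$, so $\Pic(\Oiy)$ is a cyclic group of bounded order (trivial when $\iy$ has residue degree one), and the $\CO'_i$ are correspondingly manageable.

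The main obstacle is then to combine these ingredients so as to force the equality $|H^1_\et(\Oiy,\F)| = |\F(\Oiy)|$. The intended route is a Herbrand-quotient argument: since $\F$ is finite \'etale of order prime to $\text{char}(K)$, its cohomology over $\Sp \Oiy$ is controlled, up to tame ramification at $\iy$, by the action of the procyclic Galois group $\mathrm{Gal}(\ov{\BF_q}/\BF_q)$ on a finite module. The Herbrand quotient of any finite module under a procyclic group equals $1$, which gives precisely $|H^1| = |H^0|$. The delicate point is controlling the local tame contribution at $\iy$; here the quasi-split hypothesis is essential, since the embedding $\F \hookrightarrow \T^\sc$ into a torus induced from \'etale extensions of $\Oiy$ allows one to trivialize the local behavior, via the Shapiro identification above, and reduce to the geometrically constant situation where the Herbrand argument applies.
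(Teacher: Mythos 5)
Your first identity is proved exactly as in the paper: apply flat/\'etale cohomology to sequence~(\ref{exact sequence of semisimple groups models}), invoke Lemma~\ref{H1(Gsc) trivial2} to truncate the long exact sequence after $H^1_\et(\Oiy,\F)$, and read off $|\ker(\pi_{\Oiy})|=|\F(\Oiy)|$, $|\cok(\pi_{\Oiy})|=|H^1_\et(\Oiy,\F)|$. That part is fine.

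The ``in particular'' clause is where your proposal and the paper diverge, and where your argument has a real gap. You and the paper both begin identically: quasi-splitness makes $T^\sc$ quasi-trivial (Remark~\ref{quasi-trivial Cartan}), so Shapiro identifies $H^1_\et(\Oiy,\T^\sc)$ with a direct sum of Picard groups of rings of $\{\iy\}$-integers, which vanish for $g=0$. But then the paper does something concrete with that vanishing: it feeds sequence~(\ref{exact sequence of tori models}) into cohomology, obtaining the four-term exact sequence $1 \to \F(\Oiy) \to \T^\sc(\Oiy) \to \T^0(\Oiy) \to H^1_\et(\Oiy,\F) \to 1$; it then observes that $\Oiy^\times=\BF_q^\times$ forces $\T^\sc(\Oiy)=\ov{\T^\sc}(\BF_q)$ and $\T^0(\Oiy)=\ov{\T}^0(\BF_q)$ to be finite, and concludes by the Lang-theorem fact that isogenous connected $\BF_q$-groups have the same number of $\BF_q$-points, so the two middle terms match in cardinality and therefore so do the two end terms. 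You, by contrast, switch to an unelaborated ``Herbrand-quotient argument,'' asserting that $H^1_\et(\Oiy,\F)$ is governed by $\mathrm{Gal}(\ov{\BF_q}/\BF_q)$ acting on a finite module, with Herbrand quotient $1$. For that to even start, you would need to degenerate the Hochschild--Serre spectral sequence for $\Sp(\Oiy\otimes\ov{\BF_q})\to\Sp\Oiy$, i.e.\ you must show $H^1_\et(\Sp(\Oiy\otimes\ov{\BF_q}),\F)=0$ and that $\F$ becomes a constant group scheme over $\ov{\BF_q}$. These facts do hold (they come down to the geometric tame fundamental group of $\A^1_{\ov{\BF_q}}$ being trivial in prime-to-$p$ quotients), but you never state or use them; instead you gesture at ``tame ramification at $\iy$'' and at the Shapiro identification of $H^1(\Oiy,\T^\sc)$, neither of which actually produces the degeneration you need, because the Shapiro computation controls the torus $\T^\sc$, not the finite group scheme $\F$. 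As written, the proof of $|H^1_\et(\Oiy,\F)|=|\F(\Oiy)|$ is incomplete. If you want to pursue the Herbrand route rather than the paper's torus-plus-Lang route, the missing ingredient to supply is precisely the constancy of $\F$ over $\Oiy\otimes\ov{\BF_q}$ and the resulting vanishing of the geometric $H^1$; once those are in place, the quasi-split hypothesis is no longer doing the work you claim it does, which should have been a warning sign that the argument wasn't closing correctly.
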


\begin{proof}
Since $\F$ is smooth as well as its geometric fibers we have: $H^1_\et(\Oiy,\F) = H^1_\text{fppf}(\Oiy,\F)$.   
Due to Lemma \ref{H1(Gsc) trivial2}, 
flat cohomology applied on sequence (\ref{exact sequence of semisimple groups models}) 
gives rise to the following sequence of groups of $\Oiy$-points:
\begin{equation} \label{semisimple OK-points sequence}
1 \to \F(\Oiy) \to \CG^\sc(\Oiy) \stackrel{\pi_{\Oiy}}{\to} \CG^0(\Oiy) \to H^1_\et(\Oiy,\F) \to 1. 
\end{equation} 
This gives us the first assertion. \\
If $G$ is quasi-split, then $T^\sc$ is quasi-trivial, i.e. isomorphic to a
finite product of Weil's tori $R_{K_1/K}(\BG_m) \times \cdots \times R_{K_n/K}(\BG_m)$ 
where the $K_i$'s are finite separable extensions of $K$ (see Remark \ref{quasi-trivial Cartan}). 
Consequently, its $\Oiy$-model $\T^\sc$ is isomorphic to $R_{\CO_{\{\iy\},1}} \times \cdots \times R_{\CO_{\{\iy\},n}}$ 
where $\CO_{\{\iy\},i}$ stands for the integral closure of $\Oiy$ in $K_i$ for each $i$. 
By Shapiro's formula for the flat topology, we have: 
$$ 
H^1_{\text{fppf}}(\Oiy,\T^\sc) \cong \bigoplus_{i=1}^n H^1_{\text{fppf}}(\Oiy,R_{\CO_{\{\iy\},i}/\Oiy}(\BG_m)) 
                               \cong \bigoplus_{i=1}^n H^1_{\text{fppf}}(\CO_{\{\iy\},i},\BG_m)
                               =     \bigoplus \Pic(C^\af).            
$$
If in addition the curve $C$ is of genus $0$ we have $\Pic(C^\af)=0$, 
and so flat cohomology applied on sequence (\ref{exact sequence of tori models}) 
gives rise to the following sequence of multiplicative groups of $\Oiy$-points 
\begin{equation} \label{tori OK-points sequence}
1 \to \F(\Oiy) \to \T^\sc(\Oiy) \stackrel{\pi}{\to} \T^0(\Oiy) \to H^1_\et(\Oiy,\F) \to 1. 
\end{equation}
Recall that $\Oiy = K \cap \bigcap_{\fp \neq \iy} \CO_\fp$, 
i.e., $\Oiy$ consists of exactly those elements of $K$ that do not have poles at any place $\fp \neq \iy$. 
If $x \in \CO_{\{\iy\}}$ has a proper pole at $\iy$, then it has a proper zero at some place $\fp \neq \iy$. 
Hence its inverse $x^{-1} \in K$ has a proper pole at that place and, thus, $x^{-1} \in K \backslash \CO_{\{\iy\}}$. 
We conclude that the only invertible elements of $\Oiy$ are the constants.
In other words, since the curve $C$ is projective, its regular functions are exactly the constants. 
This means that $\T^\sc(\Oiy) = \T^\sc(\BF_q)$ and $\T(\Oiy) = \T(\BF_q)$ are finite groups. 

As the reduction of all geometric fibers of $\T^\sc$ and $\T^0$ are smooth and connected, 
the specializations 
$\ov{\T^\sc} = \T^\sc \otimes_{\Sp \Oiy} \Sp \BF_q$ and $\ov{\T}^0 = \T^0 \otimes_{\Sp \Oiy} \Sp \BF_q$ 
are connected $\BF_q$-schemes,
where $\Sp \BF_q \to \Sp \Oiy$ is the closed immersion of the special point. 
Thus the exact  sequence (\ref{tori OK-points sequence}) can be rewritten as: 
\begin{equation} \label{tori Fq-points sequence}
1 \to \F(\Oiy) \to \ov{\T^\sc}(\BF_q) \stackrel{\pi}{\to} \ov{\T}^0(\BF_q) \to H^1_\et(\Oiy,\F) \to 1. 
\end{equation}  
The surjectivity of $\T^\sc \to \T^0$ implies the one of $\ov{\T^\sc} \to \ov{\T}^0$.  
These schemes are isogenous, connected and defined over $\BF_q$, so they 
share the same number of $\BF_q$-points.   
Then the exactness of (\ref{tori Fq-points sequence}) implies that $|\F(\Oiy)| = |H^1_\et(\Oiy,\F)|$.  
Returning back to the exact sequence (\ref{semisimple OK-points sequence}) we get the claim.      
\end{proof}


The group $G(\A)$ admits a natural action on the product 
$\B = \prod_\fp \B_\fp$ of the Bruhat--Tits buildings, 
and its subgroup $G(\mathbb{A}_\iy)$ fixes the fundamental special vertex 
of each building $\B_\fp$ with $\fp \neq \iy$. 
Identifying $\B_\iy$ with its product with these fundamental special vertices 
therefore yields an action of $G(\A_\iy)$ on $\B_\iy$. 
Let: 
$$ G^0(\A_\iy) = G_\iy(\hat{K}_\iy) \times \prod_{\fp \neq \iy} \un{G}_\fp^0(\hat{\CO}_\fp), \ \G^0 = G^0(\A_\iy) \cap G(K) \subset \G. $$
Notice that as $G^\sc$ is simply connected $\G^\sc := G^\sc(\A_\iy) \cap G(K) = (\G^\sc)^0$. \\ 
Consider the following compact subgroups:
\begin{align*} 
U^\sc = \prod_\fp \un{G}^\sc_\fp(\hat{\CO}_\fp) \subset G^\sc(\A_\iy), \ 
    U = \prod_\fp \un{G}^0_\fp(\hat{\CO}_\fp) \subset G(\A_\iy). 
\end{align*}
Let $Y^\sc$ and $Y$ be the sets of representatives respectively for the double cosets sets:  
\begin{align} \label{Ysc and Y}      
\G^\sc                    \backslash G^\sc(\A_\iy)      / U^\sc \cong 
 (\G^\sc \cap G^\sc(\hat{K}_\iy)) \backslash G^\sc_\iy(\hat{K}_\iy) / \un{G}^\sc_\iy(\hat{\CO}_\iy), \\ \nonumber
\G^0                      \backslash G(\A_\iy)          / U \cong 
   (\G^0 \cap G(\hat{K}_\iy))     \backslash G_\iy(\hat{K}_\iy)     / \un{G}^0_\iy(\hat{\CO}_\iy). 
\end{align}
For any $y \in Y^\sc, yU^\sc y^{-1}$ is compact and $\G^\sc$ is discrete thus their intersection is finite. 
More precisely, by the isomorphism above any such $y$ may represents a non-trivial double coset only at its $\iy$-component, 
whence $yU^\sc y^{-1} \subset G^\sc(\A_\iy)$ and therefore: 
\begin{align*}
yU^\sc y^{-1} \cap \G^\sc = yU^\sc y^{-1} \cap (G^\sc(K) \cap G^\sc(\A_\iy)) 
                                 = yU^\sc y^{-1} \cap G^\sc(K). 
\end{align*}
But conjugation by $y$ on the $\iy$-component of $U$ is a shift to the stabilizer of $yx$ in $G_\iy(\hat{K}_\iy)$: 
$$ yU^\sc y^{-1} = \un{G}^\sc_{\iy,yx}(\hat{\CO}_\iy) \times \prod_{\fp \neq \iy} \un{G}^\sc_\fp(\hat{\CO}_\fp). $$  
Thus $yU^\sc y^{-1} \cap G^\sc(K)$ admits an underlying group scheme $\wt{G}^\sc$ having only global sections on $K$, 
i.e. defined over $\Sp \BF_q$ (recall that $C$ is projective). 
We denote by $\wt{G}_{\pi(y)}$ the resulting $\BF_q$-group for the same construction for $G$ with $\pi(y)$ 
(here again: 
$\pi(y) U \pi(y)^{-1} =  \un{G}^0_{\iy,\pi(y)x}(\hat{\CO}_\iy) \times \prod_{\fp \neq \iy} \un{G}^0_\fp(\hat{\CO}_\fp)$.) 
The surjectivity of  
$\un{G}^\sc_{\iy,yx} \surj \un{G}^0_{\iy,\pi(y)x}, \un{G}^\sc_\fp \surj \un{G}^0_\fp \ \forall \fp \neq \iy$ 
and $G^\sc \surj G$ 
implies the one of $\wt{G}_y^\sc \surj \wt{G}_{\pi(y)}$ having a finite kernel as well. 
So the groups $\wt{G}_y^\sc$ and $\wt{G}_{\pi(y)}$,  
being isogeneous, connected and of finite dimension, defined over the finite field $\BF_q$, 
share the same finite number of $\BF_q$-points, 
i.e.: 
\begin{equation} \label{equality of Stab intersections} 
\forall y \in Y^\sc: |yU^\sc y^{-1} \cap \G^\sc| = |\pi(y) U \pi(y)^{-1} \cap \G^0|. 
\end{equation}
As $G(\A_\iy)$ is unimodular by \cite[Corollary~I.2.3.3]{Margulis:1991} 
we get to Serre's formula (\cite[p.~84]{Ser1},\cite[Corollary~1.6]{Bass/Lubotzky:2001}) : 
\begin{eqnarray} \label{Tamagawa number semisimple formula}
\tau(G) & \stackrel{(\ref{decompositionfirst})}{=} & h_\iy(G) \cdot \tau(G(K) \backslash G(\A_\iy)) 
          = h_\iy(G) \cdot \sum\limits_{y \in Y} \tau(yU) \\ \notag
        & = & h_\iy(G) \cdot \sum\limits_{y \in Y} \fc{\tau(U)}{|yUy^{-1} \cap \G^0|} \\ \notag
        & = & q^{-(g-1)\dim(G)} \cdot h_\iy(G) \cdot \prod_\fp \om_\fp(\un{G}^0_\fp(\hat{\CO}_\fp)) 
                                               \cdot \sum_{y \in Y} \fc{1}{|y U y^{-1} \cap \G^0|} \\ \notag
        & = & q^{-(g-1)\dim(G)} \cdot h_\iy(G) \cdot i_\iy(G) \cdot \prod_\fp \om_\fp(\un{G}^0_\fp(\hat{\CO}_\fp))    
\end{eqnarray} 
where $i_\iy(G)=\sum_{y \in Y} \fc{1}{|y U y^{-1} \cap \G^0|}$.

\begin{lem} \label{covolume equality}
With the previously introduced notations one has
\begin{equation*}
\fc{i_\iy(G)}{i_\iy(G^\sc)} = \fc{t_\iy(G)}{j_\iy(G)}.  
\end{equation*}
\end{lem}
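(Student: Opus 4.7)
The proof is a covolume comparison that uses equation~\eqref{equality of Stab intersections} as the pivotal input, combined with the discrete exact sequence from Lemma~\ref{Fiscoh2} and a Bruhat--Tits-theoretic index formula for types.

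First, I use the identifications in~\eqref{Ysc and Y} to localize at $\iy$: I regard $\G^\sc \subset G^\sc_\iy(\hat{K}_\iy)$ and $\G^0 \subset G_\iy(\hat{K}_\iy)$ via the diagonal embeddings, and set $U^\sc_\iy := \un{G^\sc}_\iy(\hat{\CO}_\iy)$, $U_\iy := \un{G}^0_\iy(\hat{\CO}_\iy)$. Since $F_\iy(\hat{K}_\iy)$ is central in the simply connected isotropic group $G^\sc_\iy(\hat{K}_\iy)$, it is contained in the maximal compact subgroup $U^\sc_\iy$. A routine computation then shows that the map $\bar\pi \colon Y^\sc \to Y$ factors as a bijection $Y^\sc \cong \pi(\G^\sc)\backslash H/\pi(U^\sc_\iy)$ followed by the natural map into $Y$, where $H := \pi(G^\sc_\iy(\hat{K}_\iy))$. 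Combined with~\eqref{equality of Stab intersections}, this yields
\[
 i_\iy(G^\sc) \;=\; \sum_{y \in Y} \frac{|\bar\pi^{-1}(y)|}{|yU y^{-1} \cap \G^0|}.
\]

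Second, I decode the two structural inputs. On the global side, the exact sequence~\eqref{semisimple OK-points sequence} supplies $[\G^0:\pi(\G^\sc)] = |H^1_\et(\Oiy,\F)|$ and $|\ker(\pi|_{\G^\sc})| = |\F(\Oiy)|$, so the formal covolume ratio $[\G^0:\G^\sc]$ equals $j_\iy(G)$. On the local side, Bruhat--Tits theory identifies $[G_\iy(\hat{K}_\iy):H \cdot U_\iy] = t_\iy(G)$: the normal subgroup $H$ preserves types of vertices in $\B_\iy$, so its orbits on the $G_\iy(\hat{K}_\iy)$-orbit of the fundamental vertex $x_0$ correspond exactly to the $t_\iy(G)$ types.

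Third, I assemble the ratio $i_\iy(G)/i_\iy(G^\sc)$ via the quotient measure formula. Using Lemma~\ref{volumes of compact groups} to identify $\om^\sc_\iy(U^\sc_\iy) = \om_\iy(U_\iy)$ under the compatible normalization $\pi^*\om = \om^\sc$, the problem reduces to computing $\mathrm{covol}(\G^0,G_\iy(\hat{K}_\iy))/\mathrm{covol}(\G^\sc,G^\sc_\iy(\hat{K}_\iy))$. Applying the quotient measure formula to the finite central extension $1 \to F_\iy(\hat{K}_\iy) \to G^\sc_\iy(\hat{K}_\iy) \to H \to 1$ converts the denominator into a covolume of $\pi(\G^\sc)$ in $H$, while applying it again to the finite-index inclusion $H \cdot U_\iy \subset G_\iy(\hat{K}_\iy)$ (of index $t_\iy(G)$) converts the numerator into a covolume of $\G^0 \cap H$ in $H$. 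Matching the resulting expressions via~\eqref{semisimple OK-points sequence} produces
\[
 \frac{i_\iy(G)}{i_\iy(G^\sc)} \;=\; \frac{t_\iy(G)\cdot|\F(\Oiy)|}{|H^1_\et(\Oiy,\F)|} \;=\; \frac{t_\iy(G)}{j_\iy(G)}.
\]

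The main obstacle lies in the bookkeeping in the third step. One must show (a) that the $t_\iy(G)$ type-classes in $Y$ contribute equally to $i_\iy(G)$, which follows from the $G_\iy(\hat{K}_\iy)$-equivariance of the Haar measure together with the transitive action of $G_\iy(\hat{K}_\iy)$ on types; and (b) that the local cohomology factors arising from the analogue of~\eqref{semisimple OK-points sequence} over $\hat{\CO}_\iy$ cancel cleanly, leaving only the global cohomology $\F(\Oiy)$ and $H^1_\et(\Oiy,\F)$. This local-global cancellation reflects the compatibility between cohomology over $\Oiy$ and over $\hat{\CO}_\iy$ via the canonical restriction maps, and constitutes the arithmetically delicate part of the argument.
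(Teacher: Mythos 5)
Your proposal follows essentially the same route as the paper: localize to $\iy$ via the identifications (\ref{Ysc and Y}), use (\ref{equality of Stab intersections}) to match the stabilizer weights, extract the factor $j_\iy(G)$ from the comparison of $\G^\sc$ and $\G^0$ in Lemma \ref{Fiscoh2}, and extract the factor $t_\iy(G)$ from the type-count of the orbit of the fundamental special vertex in $\B_\iy$. The assembly in your third step (phrased via quotient-measure formulas rather than the paper's direct grouping of the double-coset sum) is left at roughly the same level of detail as the paper's own argument, so I count it as the same proof.
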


\begin{proof}
Let us regard the double cosets groups in formulas \ref{Ysc and Y} represented by $Y^\sc$ and $Y$ respectively.  
The representatives of $G^\sc(\A_\iy) / U^\sc \cong G^\sc_\iy(\hat{K}_\iy) / \un{G}^\sc_\iy(\hat{\CO}_\iy)$ 
and of $G(\A_\iy) / U \cong G_\iy(\hat{K}_\iy) / \un{G}^0_\iy(\hat{\CO}_\iy)$ 
correspond to vertices in the orbits of $x$ in $\B_\iy$ obtained by the actions of 
$G^\sc_\iy(\hat{K}_\iy)$ and $G(\hat{K}_\iy)$ respectively (the Iwahori subgroup is the kernel of this action in each case). 
Since these actions are transitive on the alcoves in $\B_\iy$, 
it is sufficient to compare the orbits inside each alcove.  
Recall that $t_\iy(G)$ is the number of (special) points in the orbit of $x$ in one alcove. 
For each $y \in Y^\sc$, let $x_1,...,x_{t_\iy(G)}$ be the types representatives 
in the $\un{G}_\iy(\hat{K}_\iy)$-orbit of $x_1 = \pi(y)$. 
Since the $G_\iy^\sc(\hat{K}_\fp)$-action on $B_\iy$ is type preserving,  
this correspondence of types in the $x$-orbit is one to one. 

To accomplish the comparison between $Y^\sc$ and $Y$, 
the above right quotients, taken modulo the discrete subgroups $\G^\sc$ and $\G^0$ (from the left) respectively, 
correspond to vertices in some fundamental domains of the aforementioned orbits of $x$. 
In Lemma \ref{Fiscoh2} we compared between these subgroups and got that $\G^0$ is bijective to $j_\iy(G)$ times $\G^\sc$.  
Moreover, along any orbit the Bruhat-Tits schemes are isomorphic (see \cite[\S~2.5.,p.~47]{Tits}) 
and have isomorphic reductions. 
Thus: $\forall i: \ |x_i U x_i^{-1} \cap \G^0| = |\pi(y) U \pi(y)^{-1} \cap \G^0|$. 
We get: 
\begin{align*}
i_\iy(G) & = \sum_{y \in Y} \fc{1}{|x_i U x_i^{-1} \cap \G^0|}  \notag \\
         & = \sum_{y \in Y} \fc{1}{|\pi(y) U \pi(y)^{-1} \cap \G^0|}  \notag \\
         & \stackrel{(\ref{equality of Stab intersections})}{=}  
\fc{t_\iy(G)}{j_\iy(G)} \cdot \sum_{y \in Y^\sc} \fc{1}{|y U^\sc y^{-1} \cap \G^\sc|} \\ \notag
         & = \fc{t_\iy(G)}{j_\iy(G)} \cdot i_\iy(G^\sc). \qedhere 
\end{align*}
\end{proof}

Recall that
\begin{eqnarray*}
\tau(G) 
  & \stackrel{(\ref{Tamagawa number semisimple formula})}{=} 
  & q^{-(g-1)\dim G} \cdot h_\iy(G) \cdot i_\iy(G) \cdot \prod_\fp \om_\fp(\un{G}^0_\fp(\hat{\CO}_\fp)). 
\end{eqnarray*}
Clearly the invariant $q^{-(g-1)\dim G}$ is the same for both $G$ and $G^\sc$, 
as well as the volume of the compact subgroups (see Lemma \ref{volumes of compact groups} 
and Remark \ref{local measure up to scalar mult}). We conclude that: 
\begin{align*}
\fc{\tau(G)}{\tau(G^\sc)} = \fc{h_\iy(G)}{h_\iy(G^\sc)} \cdot \fc{i_\iy(G)}{i_\iy(G^\sc)}. 
\end{align*}
Now assuming the validity of the Weil Conjecture: $\tau(G^\sc)=1$ and due to the strong approximation related to $G^\sc$  
for which $h_\iy(G^\sc)=1$ (see Remark \ref{Gsc strong approximation}), plus Lemma \ref{covolume equality} we finally deduce that: 

\begin{cor} \label{ht}
$$ \tau(G) = h_\iy(G) \cdot \fc{t_\iy(G)}{j_\iy(G)}. $$ 
If $G$ is quasi-split and $C$ is of genus $0$, according to Lemma \ref{Fiscoh2} this formula simplifies to:
$$ \tau(G) = h_\iy(G) \cdot t_\iy(G). $$ 
\end{cor}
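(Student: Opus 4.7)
The plan is to compare the Tamagawa number formula (\ref{Tamagawa number semisimple formula}) for $G$ with the same formula applied to its simply connected cover $G^\sc$, exploit the cancellations that arise, and then invoke the three external inputs that have been set up: the Weil conjecture, absolute strong approximation for $G^\sc$, and the relative local comparison of Lemma~\ref{covolume equality}.

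\textbf{Step 1 (cancellations in the ratio).} First I would apply (\ref{Tamagawa number semisimple formula}) to $G^\sc$ to obtain
$$\tau(G^\sc) \;=\; q^{-(g-1)\dim G^\sc}\cdot h_\iy(G^\sc)\cdot i_\iy(G^\sc)\cdot \prod_\fp \om_\fp(\un{G^\sc}_\fp(\hat{\CO}_\fp)).$$
Since $\pi:G^\sc\to G$ is an isogeny, $\dim G^\sc=\dim G$, so the factor $q^{-(g-1)\dim G}$ matches the one for $G$. Lemma~\ref{volumes of compact groups}, together with Remark~\ref{local measure up to scalar mult}, shows that at every prime the local volumes $\om_\fp(\un{G^\sc}_\fp(\hat{\CO}_\fp))$ and $\om_\fp(\un{G}^0_\fp(\hat{\CO}_\fp))$ coincide, so their products cancel when one forms the ratio. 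What remains is
$$\frac{\tau(G)}{\tau(G^\sc)}\;=\;\frac{h_\iy(G)}{h_\iy(G^\sc)}\cdot\frac{i_\iy(G)}{i_\iy(G^\sc)}.$$

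\textbf{Step 2 (assembling the inputs).} Next I would substitute the three available facts: the assumed Weil conjecture $\tau(G^\sc)=1$; the absolute strong approximation of Remark~\ref{Gsc strong approximation}, which forces $h_\iy(G^\sc)=1$ because $G$ (hence $G^\sc$) is locally isotropic everywhere; and Lemma~\ref{covolume equality}, which rewrites $i_\iy(G)/i_\iy(G^\sc)$ as $t_\iy(G)/j_\iy(G)$. Together these give the first assertion
$$\tau(G)\;=\;h_\iy(G)\cdot\frac{t_\iy(G)}{j_\iy(G)}.$$

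\textbf{Step 3 (specializing to the genus-zero quasi-split case).} For the final clause I would invoke the second half of Lemma~\ref{Fiscoh2}: when $G$ is quasi-split and the genus of $C$ is zero, the Cartan $T^\sc$ is quasi-trivial, so all relevant Picard groups vanish and the lemma yields $j_\iy(G)=1$. Plugging this into the identity above produces the simplified formula $\tau(G)=h_\iy(G)\cdot t_\iy(G)$.

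I do not anticipate a genuine obstacle at this stage; the corollary is the packaging of results already proved. The only minor point to be careful about is that the cancellation of the local volume factors really does require \emph{the same} differential form $\om$ (and hence the same scalars $\lm_\fp$) for $G$ and $G^\sc$ at each place, which is why Remark~\ref{local measure up to scalar mult} must be cited explicitly alongside Lemma~\ref{volumes of compact groups}.
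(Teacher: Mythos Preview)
Your proposal is correct and follows essentially the same route as the paper: form the ratio $\tau(G)/\tau(G^\sc)$ using (\ref{Tamagawa number semisimple formula}), cancel the dimension factor and the local volumes via Lemma~\ref{volumes of compact groups} and Remark~\ref{local measure up to scalar mult}, then insert $\tau(G^\sc)=1$, $h_\iy(G^\sc)=1$, and Lemma~\ref{covolume equality}. Your closing caveat about using the same $\om$ for both groups is a welcome clarification but does not alter the argument.
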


\bk

%


\section{Number of types in the orbit of a special point} \label{section6}

We retain the notation and terminology introduced in the preceding sections. 

\begin{lem} \label{H1=1}
For any prime $\fp$ one has
$H^1(\la \s_\fp \ra,\pi(G_\fp^\sc(\hat{K}_\fp^\ur)))=1$. 
\end{lem}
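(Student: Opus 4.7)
The plan is to apply the long cohomology sequence to the central short exact sequence of $\langle\sigma_\fp\rangle$-modules
\begin{equation*}
1 \to F_\fp(\hat{K}_\fp^\ur) \to G_\fp^\sc(\hat{K}_\fp^\ur) \to \pi(G_\fp^\sc(\hat{K}_\fp^\ur)) \to 1,
\end{equation*}
where $F_\fp = \ker[G_\fp^\sc \to G_\fp]$ is central in $G_\fp^\sc$ as the kernel of the universal cover. Thanks to this centrality, the nonabelian cohomology sequence extends one step into $H^2$ of the abelian kernel and yields the exact sequence of pointed sets
\begin{equation*}
H^1(\langle\sigma_\fp\rangle, G_\fp^\sc(\hat{K}_\fp^\ur)) \to H^1(\langle\sigma_\fp\rangle, \pi(G_\fp^\sc(\hat{K}_\fp^\ur))) \xrightarrow{\delta} H^2(\langle\sigma_\fp\rangle, F_\fp(\hat{K}_\fp^\ur)).
\end{equation*}
I would finish by showing that both outer terms are trivial, which by exactness forces the middle term to be trivial as well.

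For the left-hand term, the inflation--restriction sequence associated with $I_\fp \trianglelefteq \fg_\fp$ (applied to $G_\fp^\sc$ viewed as a nonabelian coefficient module, using the identification $G_\fp^\sc(\hat{K}_\fp^s)^{I_\fp} = G_\fp^\sc(\hat{K}_\fp^\ur)$) provides an injection
\begin{equation*}
H^1(\fg_\fp/I_\fp, G_\fp^\sc(\hat{K}_\fp^\ur)) \hookrightarrow H^1(\hat{K}_\fp, G_\fp^\sc),
\end{equation*}
and the target vanishes by the Kneser--Bruhat--Tits--Harder theorem on the triviality of Galois $H^1$ for simply connected semisimple groups over non-archimedean local fields (valid in both characteristic zero and the positive-characteristic case relevant here). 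For the right-hand term, the standing hypothesis $(\mathrm{char}(K),|F|)=1$ makes $F$ étale, so $F_\fp(\hat{K}_\fp^\ur)$ is a finite abelian group; since $\mathrm{Gal}(\hat{K}_\fp^\ur/\hat{K}_\fp) \cong \hat{\BZ}$ has cohomological dimension one on torsion coefficients, $H^2(\langle\sigma_\fp\rangle, F_\fp(\hat{K}_\fp^\ur)) = 0$.

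The only point that requires care is the applicability of the Kneser--Bruhat--Tits vanishing to $G_\fp^\sc$ in its possibly non-quasi-split local form over a positive-characteristic local field; this is by now standard for simply connected semisimple groups and is compatible with our separability hypothesis on $\pi$ and our global isotropy assumption. Combining the two vanishings with exactness at the middle term yields $H^1(\langle\sigma_\fp\rangle, \pi(G_\fp^\sc(\hat{K}_\fp^\ur))) = 1$, as claimed.
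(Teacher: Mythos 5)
Your proof is correct and follows essentially the same skeleton as the paper's: push the central short exact sequence $1 \to F_\fp \to G_\fp^\sc \to \pi(G_\fp^\sc) \to 1$ through $\langle\sigma_\fp\rangle$-cohomology, exploit centrality to obtain an exact sequence of pointed sets ending in $H^2$ of the kernel, and kill both flanking terms. The one place you genuinely diverge is the vanishing of the left-hand term $H^1(\langle\sigma_\fp\rangle, G_\fp^\sc(\hat{K}_\fp^\ur))$: the paper simply cites Harder's Satz~A, which directly addresses this unramified $H^1$, whereas you deduce it by the nonabelian inflation--restriction injection $H^1(\fg_\fp/I_\fp, G_\fp^\sc(\hat{K}_\fp^\ur)) \hookrightarrow H^1(\hat{K}_\fp, G_\fp^\sc)$ followed by the local vanishing theorem (Kneser in characteristic $0$, Bruhat--Tits in characteristic $p$). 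Your route is a legitimate alternative, trading a more specialized reference for a more widely known one at the cost of invoking the full local vanishing rather than only its unramified shadow. The $H^2$-vanishing argument (torsion coefficients over $\hat{\BZ}$ of cohomological dimension one) matches the paper's.
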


\begin{proof}
At any prime $\fp$ we may consider the following exact sequence of $\hat{K}_\fp$-groups:
$$ 1 \to F_\fp \to G_\fp^\sc \to \pi(G^\sc) \to 1. $$
Due to Harder \cite[Satz~A]{Harder:1975} we know that $H^1(\la \s_\fp \ra,G_\fp^\sc(\hat{K}_\fp^\ur))=1$, 
hence $\la \s_\fp \ra$-cohomology gives rise to the exact sequence:
$$ 1 \to H^1(\la \s_\fp \ra,\pi(G_\fp^\sc(\hat{K}_\fp^\ur)) \to H^2(\la \s_\fp \ra,F_\fp(\hat{K}_\fp^\ur)) $$
on which the right term is trivial as $F_\fp(\hat{K}_\fp^\ur)$ is finite. 
This gives the required result.   
\end{proof}

\begin{lem} \label{what is k} 
The number $t_\iy(G)$ of (special) types in the $\un{G}_\iy(\hat{K}_\iy)$-orbit of the
fundamental special vertex $x$ in $\B_\iy$ is given by 
$$ t_\iy(G) = |H^1(I_\iy,F_\iy(\hat{K}_\iy^s))^{\s_\iy}| = |\wh{F_\iy}^{\fg_\iy}|. $$ 
\end{lem}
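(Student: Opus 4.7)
The plan is to translate the type-counting problem into Galois cohomology of the central kernel $F_\iy$ via the covering $\pi_\iy: G^\sc_\iy \to G_\iy$, and then pass to inertia cohomology through the Hochschild--Serre spectral sequence for $I_\iy \triangleleft \fg_\iy$.

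First I would express $t_\iy(G)$ as an index. Since $G^\sc_\iy(\hat{K}_\iy)$ acts type-preservingly on $\B_\iy$ and transitively on the special vertices of each fixed type, the number of types in the $G_\iy(\hat{K}_\iy)$-orbit of $x$ equals the number of $\pi_\iy(G^\sc_\iy(\hat{K}_\iy))$-orbits inside that orbit, so that
$$ t_\iy(G) \;=\; [\,G_\iy(\hat{K}_\iy) : \pi_\iy(G^\sc_\iy(\hat{K}_\iy))\cdot P_x\,], $$
where $P_x = \un{G}_\iy(\hat{\CO}_\iy)$ is the stabilizer of $x$. Invoking the Kneser--Bruhat--Tits--Harder vanishing $H^1(\hat{K}_\iy, G^\sc_\iy) = 1$, the long exact sequence attached to $1 \to F_\iy \to G^\sc_\iy \to G_\iy \to 1$ produces a surjection $\delta_\iy: G_\iy(\hat{K}_\iy) \surj H^1(\hat{K}_\iy, F_\iy)$ with kernel $\pi_\iy(G^\sc_\iy(\hat{K}_\iy))$, whence
$$ t_\iy(G) \;=\; |H^1(\hat{K}_\iy, F_\iy)| / |\delta_\iy(P_x)|. $$

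The key step is to identify $\delta_\iy(P_x)$ with the unramified subgroup $H^1_{\mathrm{nr}}(\hat{K}_\iy, F_\iy)$. Since $\F$ is smooth (by the prime-to-$|F|$ hypothesis) and finite, $\un{F}_\iy := \ker[\un{G^\sc}_\iy \to \un{G}^0_\iy]$ is \'etale over $\hat{\CO}_\iy$. Lang's theorem applied to the connected reduction of the simply connected fibre gives $H^1_\et(\hat{\CO}_\iy, \un{G^\sc}_\iy) = H^1(k_\iy,\ov{G^\sc}_\iy) = 1$, so the integral exact sequence supplies a surjection $\un{G}^0_\iy(\hat{\CO}_\iy) \surj H^1_\et(\hat{\CO}_\iy, \un{F}_\iy)$. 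Functoriality of the connecting map along $\hat{\CO}_\iy \hookrightarrow \hat{K}_\iy$ then identifies $\delta_\iy(\un{G}^0_\iy(\hat{\CO}_\iy)) = H^1_{\mathrm{nr}}(\hat{K}_\iy, F_\iy)$; a separate Lang-type argument for the finite component group $\ov{G}_\iy/\ov{G}^0_\iy$ shows that the extra components of $P_x$ contribute no new cohomology classes, so $\delta_\iy(P_x) = H^1_{\mathrm{nr}}(\hat{K}_\iy, F_\iy)$.

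To conclude, the Hochschild--Serre five-term exact sequence for $I_\iy \triangleleft \fg_\iy$ acting on $F_\iy(\hat{K}_\iy^s)$, together with $\mathrm{cd}(\fg_\iy/I_\iy) = 1$ (killing $H^2$ on finite modules), yields
$$ 0 \to H^1_{\mathrm{nr}}(\hat{K}_\iy, F_\iy) \to H^1(\hat{K}_\iy, F_\iy) \to H^1(I_\iy, F_\iy(\hat{K}_\iy^s))^{\s_\iy} \to 0, $$
so $t_\iy(G) = |H^1(I_\iy, F_\iy(\hat{K}_\iy^s))^{\s_\iy}|$. For the second equality, the canonical $\fg_\iy$-equivariant isomorphism of the tame inertia with $\prod_{\ell\neq p}\BZ_\ell(1)$ identifies $H^1(I_\iy, F_\iy(\hat{K}_\iy^s))$ with the Tate twist $F_\iy(\hat{K}_\iy^s)(-1)$, whose $\fg_\iy$-invariants are in perfect local Tate duality with $\wh{F_\iy}^{\fg_\iy}$, giving the equality of cardinalities. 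The main obstacle is the third step: correctly identifying $\delta_\iy(P_x)$ with the unramified subgroup, which requires both the integral Bruhat--Tits model and careful bookkeeping of the component group $\ov{G}_\iy/\ov{G}^0_\iy$ to control the difference between $P_x$ and $\un{G}^0_\iy(\hat{\CO}_\iy)$.
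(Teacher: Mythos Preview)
Your overall strategy is sound and, in fact, runs parallel to the paper's, but the route you take to the first equality is genuinely different and contains one unjustified step.

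\textbf{Where you diverge from the paper.} The paper never brings the stabilizer $P_x$ into the argument. Instead it identifies the full type-preserving subgroup of $G_\iy(\hat K_\iy)$ as $\pi\bigl(G^\sc_\iy(\hat K_\iy^\ur)\bigr)\cap G_\iy(\hat K_\iy)$, so that $t_\iy(G)$ is the index of this subgroup; it then takes $\langle\sigma_\iy\rangle$-invariants of the short exact sequence $1\to\pi(G^\sc_\iy(\hat K_\iy^\ur))\to G_\iy(\hat K_\iy^\ur)\to H^1(I_\iy,F_\iy(\hat K_\iy^s))\to 1$, using the auxiliary vanishing $H^1(\langle\sigma_\iy\rangle,\pi(G^\sc_\iy(\hat K_\iy^\ur)))=1$ (their Lemma~\ref{H1=1}). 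No integral model and no unramified-cohomology bookkeeping is needed. Your approach instead computes $t_\iy(G)=|H^1(\hat K_\iy,F_\iy)|/|\delta_\iy(P_x)|$ and then must identify $\delta_\iy(P_x)$ with $H^1_{\mathrm{nr}}$.

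\textbf{The gap.} Your identification $\delta_\iy(\un G^0_\iy(\hat\CO_\iy))=H^1_{\mathrm{nr}}$ is fine, but the passage from $\un G^0_\iy(\hat\CO_\iy)$ to the full stabilizer $P_x=\un G_\iy(\hat\CO_\iy)$ is not handled. The phrase ``a separate Lang-type argument for the finite component group $\ov G_\iy/\ov G^0_\iy$'' is not an argument: Lang's theorem applies to connected groups and says nothing about where the extra components land under $\delta_\iy$. What you actually need is that $P_x$ lies in the type-preserving subgroup $\pi(G^\sc_\iy(\hat K_\iy^\ur))\cap G_\iy(\hat K_\iy)$; equivalently, that non-type-preserving elements of $G_\iy(\hat K_\iy)$ fix no vertex of $\B_\iy$. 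This is true (the induced action of $G_\iy(\hat K_\iy)/G_\iy(\hat K_\iy)^+$ on the local Dynkin diagram is by special automorphisms coming from translations in the extended affine Weyl group, hence free on vertices), but it requires a building-theoretic justification, not Lang. The paper sidesteps this entirely by working over $\hat K_\iy^\ur$ from the outset.

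\textbf{On the second equality.} Your Tate-duality route is legitimate but your intermediate statement ``$H^1(I_\iy,F_\iy(\hat K_\iy^s))\cong F_\iy(\hat K_\iy^s)(-1)$'' presupposes that $I_\iy$ acts trivially on $F_\iy(\hat K_\iy^s)$, which fails when $F_\iy$ is ramified. The equality of cardinalities $|H^1(I_\iy,F_\iy)^{\sigma_\iy}|=|\wh{F_\iy}^{\fg_\iy}|$ still follows from local duality together with $|M^{\sigma_\iy}|=|M_{\sigma_\iy}|$ for finite $M$, but you should say this rather than invoke a Tate-twist identification that is not available in general. The paper instead proves this via the cocharacter lattices $X_*(T^\sc_\iy)\hookrightarrow X_*(T_\iy)$, taking $I_\iy$-coinvariants and then $\sigma_\iy$-invariants, which is more elementary and also pins down $t_\iy(G)$ as $|\cok(\pi^\vee_{I_\iy})|$.
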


\begin{proof}
Galois $I_\iy$ and $\fg_\iy$-cohomology yield the exact diagram
$$\xymatrix{
1 \ar[r] & F_\iy(\hat{K}_\iy^\ur) \ar[r] & G^\sc_\iy(\hat{K}_\iy^\ur) \ar[r]^\pi  & G_\iy(\hat{K}_\iy^\ur) \ar[r]    & H^1(I_\iy,F_\iy(\hat{K}_\iy^s)) \ar[r] & 1 \\
1 \ar[r] & F_\iy(\hat{K}_\iy)     \ar[r] & G^\sc_\iy({\hat{K}_\iy})   \ar[r]^\pi \ar@{^{(}->}[u] & G_\iy(\hat{K}_\iy) \ar[r] \ar@{^{(}->}[u] & H^1(\fg_\iy,F_\iy(\hat{K}_\iy^s)) \ar[r] \ar[u] & 1
}$$
The group $\pi_\iy(G^\sc_\iy(\hat{K}_\iy^\ur)) \cap G_\iy({\hat{K}_\iy})$ is the largest type-preserving subgroup of $G_\iy(\hat{K}_\iy)$. 
By the classification of affine Dynkin diagrams
an automorphism of $\B_\iy$ preserves the types of special vertices in $\B_\iy$ 
if and only if it preserves types of arbitrary vertices. 
Therefore the cosets of $\pi_\iy(G^\sc_\iy(\hat{K}_\iy^\ur)) \cap G_\iy(\hat{K}_\iy)$ in $G_\iy(\hat{K}_\iy)$ are in $1$-to-$1$ 
correspondence with the types of special vertices in the $\un{G}_\iy(\hat{K}_\iy)$-orbit. 
We conclude that 
$$ t_\iy(G) = \left|G_\iy(\hat{K}_\iy)/\left(\pi(G^\sc_\iy(\hat{K}_\iy^\ur)) \cap G_\iy(\hat{K}_\iy)\right)\right|. $$
The exact sequence
$$ 1 \to F_\iy(\hat{K}_\iy^\ur) \to G_\iy^\sc(\hat{K}_\iy^\ur) \stackrel{\pi}{\rightarrow} G_\iy(\hat{K}_\iy^\ur) \to H^1(I_\iy,F_\iy(\hat{K}_\iy^s)) \to 1 $$
can be shortened to
$$ 1 \to \pi(G_\iy^\sc(\hat{K}_\iy^\ur)) \to G_\iy(\hat{K}_\iy^\ur) \to H^1(I_\iy,F_\iy(\hat{K}_\iy^s)) \to 1. $$
Applying $\la \s_\iy \ra$-cohomology on this exact sequence gives the exact sequence 
$$
1 \to \pi(G_\iy^\sc(\hat{K}_\iy^\ur)) \cap G_\iy(\hat{K}_\iy) \to G_\iy(\hat{K}_\iy) \to H^1(I_\iy,F_\iy(\hat{K}_\iy^s))^{\s_\iy} 
                                                                   \to H^1(\la \s_\iy \ra,\pi(G_\iy^\sc(\hat{K}_\iy^\ur))) 
$$
on which the right-hand group is trivial by Lemma \ref{H1=1}. Hence
$t_\iy(G) = |H^1(I_\iy,F_\iy(\hat{K}_\iy^s))^{\s_\iy}|$. 

More explicitly, the Kottwitz epimorphism together with Galois descent, yields an epimorphism $T_\iy(\hat{K}_\iy) \to X_*(T_\iy)_{I_\iy}^{\s_\iy}$ 
whose kernel is the Iwahori subgroup $\un{T}^0_\iy(\hat{\CO}_\iy)$ (see \cite[Corollary~3.2]{Bit}). 
We get the following exact diagram
$$\xymatrix{
1 \ar[r] & \un{F}_\iy(\hat{\CO}_\iy) \ar[r] \ar@{=}[d] &  \un{T^\sc}_\iy(\hat{\CO}_\iy)     \ar[r]^{\un{\pi}_\iy} \ar[d] & \un{T}^0_\iy(\hat{\CO}_\iy) \ar[r] \ar[d] & H^1(\la \s_\iy \ra,\un{F}_\iy(\hat{\CO}_\iy^\sh))  \ar[d] \ar[r] & 0 \\ 
1 \ar[r] & F_\iy(\hat{K}_\iy)   \ar[r] \ar[d] &  T_\iy^\sc(\hat{K}_\iy) \ar[r]^{\pi_\iy}   \ar[d] & T_\iy(\hat{K}_\iy) \ar[r] \ar[d] & H^1(\fg_\iy,F_\iy(\hat{K}_\iy^s)) \ar[r] \ar[d] & 0 \\
         &    0           \ar[r] & X_*(T_\iy^\sc)_{I_\iy}^{\s_\iy} \ar[r]^{\pi^\vee_{I_\iy}} & X_*(T_\iy)_{I_\iy}^{\s_\iy} \ar[r] & H^1(I_\iy,F_\iy(\hat{K}_\iy^s))^{\s_\iy}  \ar[r] & 0
}$$
on which the lower row can be also obtained by the following steps: 
applying the contravariant left-exact functor $\Hom(-,\Z)$ 
on the exact sequence of character $\fg_\iy$-modules
$$ 0 \to X^*(T_\iy) \to X^*(T_\iy^\sc) \to \wh{F_\iy} \to 0, $$
on which $\wh{F_\iy} = \Hom(F_\iy \otimes \hat{K}_\iy^s,\BG_{m,\hat{K}_\iy^s})$, gives the exact sequence
\begin{equation} \label{cocharacters sequence}
0 \to 0 = \Hom(\wh{F_\iy},\Z) \to  X_*(T_\iy^\sc) \stackrel{\pi^\vee}{\longrightarrow} X_*(T_\iy) \to \Ext^1(\wh{F_\iy},\Z) \to \Ext^1(X^*(T_\iy^\sc),\Z) = 0. 
\end{equation}
Applying the functor $\Hom(\wh{F_\iy},-)$ on the resolution
$$ 0 \to \Z \to \BQ \to \BQ/\Z \to 0 $$
gives rise to a long exact sequence on which as $\wh{F_\iy}$ is finite, $\Hom(\wh{F_\iy},\BQ)=0$  
and $\Ext^1(\wh{F_\iy},\BQ)=0$, showing the existence of an isomorphism
$$ \Ext^1(\wh{F_\iy},\Z) \cong \Hom(\wh{F_\iy},\BQ/\Z) = \wh{F_\iy}^* $$
where $\wh{F_\iy}^*$ is the Pontryagin dual of $\wh{F_\iy}$, 
i.e., the group of finite order characters of $\wh{F_\iy}$, see also \cite[p.~23]{Mil2}. 
Being finite, these duals are isomorphic. 
So sequence (\ref{cocharacters sequence}) can be rewritten as
\begin{equation} \label{cocharacters sequence2}
0 \to  X_*(T_\iy^\sc) \stackrel{\pi^\vee}{\longrightarrow} X_*(T_\iy) \to \wh{F_\iy}^* \to 0. 
\end{equation}
The $I_\iy$-coinvariants functor is in general only right exact, 
but here as $\un{T^\sc}_\iy$ is connected, $X_*(T_\iy^\sc)_{I_\iy}$ is free 
(see \cite[Formula~(3.1)]{Bit}) and embedded into  $X_*(T_\iy)_{I_\iy}$. 
Thus applying this functor on
$$ 0 \to  X_*(T_\iy^\sc) \stackrel{\pi^\vee}{\longrightarrow} X_*(T_\iy) \to \wh{F_\iy}^* \cong \wh{F_\iy} \to 0 $$ 
also leaves the left hand side exact
$$ 0 \to  X_*(T^\sc_\iy)_{I_\iy} \stackrel{\pi^\vee}{\longrightarrow} X_*(T_\iy)_{I_\iy} \to \wh{F_\iy}^*_{I_\iy} \to 0. $$ 
Now applying the Galois $\la \s_\iy \ra$-cohomology gives the exact lower row on the above diagram
\begin{equation} \label{t_iyG} 
0 \to  X_*(T^\sc_\iy)_{I_\iy}^{\s_\iy} \stackrel{\pi_{I_\iy}^\vee}{\longrightarrow} X_*(T_\iy)_{I_\iy}^{\s_\iy} 
  \to (\wh{F_\iy}^*_{I_\iy})^{\s_\iy} \to H^1(\la \s_\iy \ra,X_*(T^\sc_\iy)) = 0.  
\end{equation}
Returning to the diagram, as $\wh{F_\iy}^*$ being finite is isomorphic as a $\fg_\iy$-module to $\wh{F_\iy}$, we finally get
\begin{equation*}
t_\iy(G) = |H^1(I_\iy,F_\iy(\hat{K}_\iy^s))^{\s_\iy}| = |\cok(\pi^\vee_{I_\iy})| = |\wh{F_\iy}^{\fg_\iy}|. \qedhere
\end{equation*} 
\end{proof}

\begin{remark} 

\begin{enumerate}
\item Sequence (\ref{t_iyG}) illustrates the fact that the number $t_\iy(G)$ of types in the orbit of $x$ 
depends only on the embedding of $X_*(T^\sc_\iy)$ in $X_*(T_\iy)$. 
\item By the geometric version of \v{C}ebotarev's density theorem (see in \cite{Jar}), 
one may choose the point $\iy$ such that $G_\iy$ is split. 
In this case $t_\iy(G) = |F_\iy|$.  
\end{enumerate}
\end{remark}

Now Corollary \ref{ht} together with Lemma~\ref{what is k} show the Main Theorem.

\begin{maintheorem*} 
Assuming the Weil conjecture validity one has: 
$$ \tau(G) =  h_\iy(G) \cdot \fc{t_\iy(G)}{j_\iy(G)}. $$  
The number $t_\iy(G)$ satisfies 
$$t_\iy(G) = |H^1(I_\iy,F_\iy(\hat{K}_\iy^s))^{\s_\iy}| = |\wh{F_\iy}^{\fg_\iy}|$$
and: 
$$ j_\iy(G) = \fc{|H^1_\et(\Oiy,\F)|}{|\F(\Oiy)|}. $$
If in particular $G$ is quasi-split and the genus $g$ of the curve $C$ is $0$ then $j_\iy(G)=1$ and so
$$ \tau(G) =  h_\iy(G) \cdot t_\iy(G) = h_\iy(G) \cdot |\wh{F_\iy}^{\fg_\iy}|. $$  
\end{maintheorem*}

\bk


\section{Application and examples} \label{section7}
In this section we describe an application of our Main Theorem 
in case $G$ is \textbf{quasi-split} and $g=0$.  
We combine our result with \cite[Formula (3.9.1')]{Ono1} and
the techniques from \cite[\S~8.2]{PR} in order to relate the cokernels of
Bourqui's degree maps $\mathrm{deg}_{T^\sc}$ and $\mathrm{deg}_T$ from \cite[Section~2.2]{Bou}, 
where $T^\sc$ and $T$ denote suitable Cartan subgroups of $G^\sc$ and $G$ respectively;
cf.\ Proposition~\ref{D=1} below. These concrete computations will allow us to also provide a wealth of examples for which we compute the relative Tamagawa numbers.
Ono's formula was originally designed for groups over number fields 
and was generalized to the function field case in \cite[Theorem~6.1]{Behrend/Dhillon:2009}. 
We will use freely the notation concerning algebraic tori introduced in \cite{Ono1}.  In this section we will usually assume that Weil's conjecture $\tau(G^\sc) = 1$ holds.   

\begin{remark} \label{index of components}
According to the Bruhat--Tits construction $\un{G}_\fp(\hat{\CO}_\fp) = \un{T}_\fp(\hat{\CO}_\fp) \fX(\hat{\CO}_\fp)$.  
As $G_\fp$ is quasi-split one has (see \cite[Corollary~4.6.7]{BT2}) 
$\un{G}_\fp^0(\hat{\CO}_\fp) = \un{T}_\fp^0(\hat{\CO}_\fp) \fX(\hat{\CO}_\fp)$  
and so
\begin{equation*} 
[\un{G}_\fp(\hat{\CO}_\fp):\un{G}_\fp^0(\hat{\CO}_\fp)] = [\un{T}_\fp(\hat{\CO}_\fp):\un{T}_\fp^0(\hat{\CO}_\fp)].
\end{equation*} 
\end{remark}

\begin{definition}
The finite group $W(T) = T(K) \cap T^c(\A) = \T(\BF_q)$ 
is the \emph{group of units} of $T$ and its cardinality is denoted by $w(T)$. 
\end{definition}

\begin{lem} \label{wratio}
$$ 
\fc{w(T)}{w(T^\sc)} = \fc{|\T(\BF_q)|}{|\T^\sc(\BF_q)|} = \fc{|\T(\BF_q)|}{|\T^0(\BF_q)|} = [\T(\BF_q):\T^0(\BF_q)] . $$
\end{lem}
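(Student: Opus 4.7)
The plan is to handle the three equalities in sequence, with the middle one carrying the only real content.

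The leftmost equality $w(T)/w(T^\sc) = |\T(\BF_q)|/|\T^\sc(\BF_q)|$ is immediate from the definition preceding the statement: $W(T) = T(K) \cap T^c(\A) = \T(\BF_q)$, so that $w(T) = |\T(\BF_q)|$, and analogously $w(T^\sc) = |\T^\sc(\BF_q)|$. The rightmost equality $|\T(\BF_q)|/|\T^0(\BF_q)| = [\T(\BF_q):\T^0(\BF_q)]$ is simply Lagrange's theorem applied to the open subgroup scheme $\T^0 \subset \T$, which induces a subgroup $\T^0(\BF_q) \leq \T(\BF_q)$.

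The middle equality $|\T^\sc(\BF_q)| = |\T^0(\BF_q)|$ is the substantive step. First I would invoke the assumption that $G$ is quasi-split: by Remark \ref{quasi-trivial Cartan}, the Cartan $T^\sc$ of $G^\sc$ is then quasi-trivial, so each local NR-model $\un{T^\sc}_\fp$ is connected (\cite[Prop.~2.4]{NX}), and therefore the global $\Oiy$-model $\T^\sc$ coincides with its identity component $(\T^\sc)^0$. Next I would apply the exact sequence of $\Oiy$-schemes
\begin{equation*}
1 \to \F \to \T^\sc \to \T^0 \to 1
\end{equation*}
from (\ref{exact sequence of tori models}) and base change along the closed immersion $\Sp \BF_q \hookrightarrow \Sp \Oiy$ of the special point to obtain an isogeny $\ov{\T^\sc} \to \ov{\T^0}$ of connected $\BF_q$-group schemes (connectedness of $\ov{\T^\sc}$ follows from the previous step, and connectedness of $\ov{\T^0}$ is built into the definition of $\T^0$). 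Finally I would invoke the standard fact that isogenous connected algebraic groups defined over $\BF_q$ have the same number of $\BF_q$-rational points (\cite[\S~16.8]{Bor}), which has already been used in the proofs of Lemmas \ref{volumes of compact groups} and \ref{Fiscoh2}.

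There is no serious obstacle: every ingredient has been prepared earlier in the article. The only point demanding a brief check is that the reduction of the $\Oiy$-scheme $\T^\sc$ at the special point equals $\ov{\T^\sc}$, which is why the preliminary step showing $\T^\sc = (\T^\sc)^0$ is needed — without it, one would only get an isogeny onto the identity component and would have to carry an extra index factor, which is precisely what the assertion $|\T^\sc(\BF_q)| = |\T^0(\BF_q)|$ rules out.
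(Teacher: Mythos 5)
Your proof is correct and follows essentially the same route as the paper: the paper's one-line proof simply refers back to the argument already made inside the proof of Lemma~\ref{Fiscoh2}, namely that the special fibers $\ov{\T^\sc}$ and $\ov{\T}^0$ are isogenous connected $\BF_q$-groups and hence have the same number of $\BF_q$-points, and you correctly unpack precisely those ingredients (quasi-triviality of $T^\sc$ via Remark~\ref{quasi-trivial Cartan} giving connectedness of $\T^\sc$, base change of sequence~(\ref{exact sequence of tori models}), and \cite[\S~16.8]{Bor}).
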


\begin{proof}
Under the assumptions of $G$ being quasi-split and $g=0$ the finite groups 
$\T^\sc(\BF_q)$ and $\T^0(\BF_q)$ are of the same cardinality (see in the proof of Lemma \ref{Fiscoh2}).  
The assertion follows. 
\end{proof}

For an algebraic $K$-torus $T$ we set the following subgroup of the adelic group $T(\A)$
\begin{equation} \label{T1}
T^1(\A) := \{x \in T(\A) : ||\chi(x)||=1 \ \forall \chi \in X^*(T)_K \}. 
\end{equation}
Let $\fg = \mathrm{Gal}(K^s/K)$. 
Following J. Oesterl\'e in \cite[I.5.5]{Oes}, D. Bourqui defines in \cite[\S 2.2.1]{Bou} the morphism
$$ \deg_T : T(\A) \to \Hom(X^*(T)^\fg,\Z) $$
with $\ker(\deg_T) = T^1(\A)$ and a finite cokernel (see \cite[Proposition 2.21]{Bou}). 
The maximal compact subgroup of $T(\A)$ is denoted by
$$ T^c(\A) := \prod_\fp \un{T}_\fp(\hat{\CO}_\fp). $$ 

\begin{definition} 
The \emph{class number} of $T$ is $ h(T) := [T^1(\A):T^c(\A)T(K)]$. 
\end{definition}

By \cite[Formula~(3.9.1')]{Ono1} for a $K$-isogeny $\pi : T \to T'$ of tori $T$, $T'$ defined over $K$ one has
\begin{equation} \label{Tamagawa isogeny}
\tau(\pi) := \fc{\tau(T')}{\tau(T)} =  \fc{w(T)}{w(T')} \fc{h(T')}{h(T)} \prod_\fp \fc{L_\fp(1,\chi_{T'_\fp}) \cdot \om_\fp(\un{T'}_\fp(\hat{\CO}_\fp))}{L_\fp(1,\chi_{T_\fp}) \cdot \om_\fp(\un{T}_\fp(\hat{\CO}_\fp))}.
\end{equation}

We shall need the following
\begin{lem} \label{H1 connected trivial1} 
Let $\un{H}_\fp$ be an affine, smooth and connected group scheme defined over $\CO_\fp$. 
Then $H^1(\la \s_\fp \ra,\un{H}_\fp(\CO_\fp^\sh))=1$. 
\end{lem}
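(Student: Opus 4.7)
The plan is to reduce the statement to Lang's theorem for the special fiber together with an additive Hilbert~90 for the successive congruence quotients. Write $\ov{H}_\fp$ for the special fiber of $\un{H}_\fp$ over the residue field $k_\fp^s$ of $\CO_\fp^\sh$. Smoothness of $\un{H}_\fp$ together with the (strict) Henselian property of $\CO_\fp^\sh$ gives, as in \cite[Proposition~2.3/5]{BLR}, the exact sequence of $\la \s_\fp \ra$-modules
\begin{equation*}
1 \to \un{H}_\fp^1(\CO_\fp^\sh) \to \un{H}_\fp(\CO_\fp^\sh) \stackrel{\red}{\to} \ov{H}_\fp(k_\fp^s) \to 1,
\end{equation*}
where $\un{H}_\fp^1(\CO_\fp^\sh)$ denotes the kernel of the reduction. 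Taking the long exact sequence of $\la \s_\fp \ra$-cohomology, it suffices to prove vanishing for the outer two terms.

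For the reduction $\ov{H}_\fp(k_\fp^s)$, since $\ov{H}_\fp$ is a smooth connected affine $k_\fp$-group and $\s_\fp$ is the geometric Frobenius of the finite field $k_\fp$, Lang's theorem yields $H^1(\la \s_\fp \ra, \ov{H}_\fp(k_\fp^s)) = 1$. For the pro-unipotent kernel $\un{H}_\fp^1(\CO_\fp^\sh)$, I would filter it by the higher congruence subgroups $\un{H}_\fp^n(\CO_\fp^\sh)$, whose successive quotients $\un{H}_\fp^n(\CO_\fp^\sh)/\un{H}_\fp^{n+1}(\CO_\fp^\sh)$ are, via the exponential/truncation map, isomorphic as $\la \s_\fp \ra$-modules to $\mathrm{Lie}(\ov{H}_\fp) \otimes_{k_\fp^s} \fp^n/\fp^{n+1}$, i.e.\ to a finite-dimensional $k_\fp^s$-vector space with semilinear Frobenius action. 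On each such quotient the map $\s_\fp - 1$ is surjective by the additive Hilbert~90 for the extension $k_\fp^s/k_\fp$, so $H^1(\la \s_\fp \ra, -) = 0$ on every graded piece.

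Passing to the limit gives $H^1(\la \s_\fp \ra, \un{H}_\fp^1(\CO_\fp^\sh)) = 1$ (the standard continuity argument for the cohomology of $\hat{\Z}\cong \la \s_\fp\ra$ on a pro-finite-length module: any continuous $1$-cocycle factors through some $\un{H}_\fp(\CO_\fp^\sh)/\un{H}_\fp^N(\CO_\fp^\sh)$, on which vanishing follows from the graded pieces by induction on $N$, and the bounds on coboundaries can be chosen compatibly to lift). Plugging both vanishings into the long exact sequence yields $H^1(\la \s_\fp \ra, \un{H}_\fp(\CO_\fp^\sh)) = 1$.

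The main technical point is the last passage to the limit, since $\langle\s_\fp\rangle$ is topologically isomorphic to $\hat{\Z}$ rather than a finite group; one must verify compatibly that lifts of coboundaries can be chosen to converge, which is where smoothness and the finite length of each graded piece are essential. Everything else is a direct application of Lang's theorem and additive Hilbert~90.
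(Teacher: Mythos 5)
Your overall strategy -- reduce to the special fiber and apply Lang's theorem -- is the same as the paper's, which simply cites Milne, \emph{\'Etale Cohomology}, Remark~III.3.11(a) to obtain the isomorphism $H^1(\la\s_\fp\ra,\un{H}_\fp(\CO_\fp^\sh))\cong H^1(\la\s_\fp\ra,\ov{H}_\fp(k_\fp^s))$ for a smooth group scheme over a Henselian local ring, and then Lang. What you do instead is try to re-derive that reduction by hand via the congruence filtration on the kernel of the reduction map, and there is a genuine gap exactly where you flag it.

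The difficulty is that $\CO_\fp^\sh$ is Henselian but not $\fp$-adically complete, so the filtered group $\un{H}_\fp^1(\CO_\fp^\sh)$ is \emph{not} a pro-finite-length (i.e.\ profinite) module: it is Hausdorff in the congruence topology but does not coincide with the inverse limit of its quotients $\un{H}_\fp^1/\un{H}_\fp^N$. A cocycle does factor through a finite cyclic quotient $\Z/n$ of $\la\s_\fp\ra$ (because the module is discrete for the Galois topology), with values in $\un{H}_\fp^1(\CO_n)$ for some finite unramified extension $\CO_n/\CO_\fp$; and on each graded piece $H^1=0$ holds as you say. But the inductive lifts of coboundary witnesses produce a sequence that only converges in the completion $\wh{\CO}_n$, not in $\CO_n$ itself, so the argument delivers a coboundary over $\wh{\CO}_n$ and stops there. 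Closing the gap requires importing a new ingredient: either (i) interpret the cocycle as a smooth $\un{H}_\fp$-torsor over $\CO_\fp$ that is trivial on the special fiber and use Hensel's lemma to produce an $\CO_\fp$-point (this is precisely the content of Milne's remark), or (ii) descend the solution from $\wh{\CO}_n$ to $\CO_n$ by Artin approximation. Either route is qualitatively different from the filtration computation and cannot be replaced by "the bounds on coboundaries can be chosen compatibly to lift," which presupposes completeness.

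In short: the graded-piece analysis and the use of Lang and additive Hilbert~90 are correct and match the structure one would use over a complete DVR; but the final passage to the limit is not a routine continuity argument here, and the paper's citation of Milne is doing exactly the work that is missing in your write-up.
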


\begin{proof} 
As $\CO_\fp$ is Henselian, we have 
$H^1(\la \s_\fp \ra,\un{H}_\fp(\CO_\fp^\sh)) \cong H^1(\la \s_\fp \ra,\ov{H}_\fp(k_\fp^s))$ 
(see Remark 3.11(a) in \cite[Chapter~III,~\S3]{Mil1}). 
The group on the right hand side is trivial by Lang's Theorem 
(see \cite{Lan} and \cite[Chapter~VI,~Proposition~5]{Ser2}).  
\end{proof}

\begin{remark} \label{H1 trivial over Kp}
As $G^\sc_\fp$ is quasi-split and simply connected, its Cartan subgroup $T^\sc_\fp$ is a quasi-trivial torus 
(i.e. a product of Weil tori). 
Thus not only $H^1(\fg_\fp,G^\sc_\fp(K_\fp^s))=1$ (which is due to Harder as aforementioned), 
but also $H^1(\fg_\fp,T^\sc_\fp(K_\fp^s))=1$ as well as $H^1(\fg,G^\sc(K^s))=1$ and $H^1(\fg,T^\sc(K^s))=1$.  
\end{remark}

\begin{lem} \label{class number ratio}
The map $\pi^\vee_K: \Hom(X^*(T^\sc)^\fg,\Z) \to \Hom(X^*(T)^\fg,\Z)$ is injective. 
One has
\begin{equation*}
   h_\iy(G) \cdot \fc{h(T^\sc)}{h(T)} = \fc{|\cok(\pi^\vee_K)|}{t_\iy(G) \cdot |\D|} \cdot 
   \fc{\prod_\fp [\un{T}_\fp(\hat{\CO}_\fp):\un{T}_\fp^0(\hat{\CO}_\fp)]}{[\T(\BF_q):\T^0(\BF_q)]}. 
\end{equation*}
\end{lem}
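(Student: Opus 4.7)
The plan is to handle $\pi^\vee_K$'s injectivity by a character-lattice argument, then derive the numerical identity from Ono's formula~(\ref{Tamagawa isogeny}) applied to the isogeny $\pi:T^\sc\to T$, combined with the Main Theorem.

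For the injectivity, I would start from the character-module short exact sequence
$$0 \to X^*(T) \xrightarrow{\pi^*} X^*(T^\sc) \to \wh{F} \to 0$$
and take $\fg$-invariants. The induced map $X^*(T)^\fg \hookrightarrow X^*(T^\sc)^\fg$ is injective with cokernel embedded in the finite group $\wh{F}^\fg$; since $\pi$ is an isogeny the two character lattices have equal rank, and therefore so do their $\fg$-invariants. Applying $\Hom(-, \Z)$ to a rank-preserving injection of finitely generated free abelian groups yields the required injection $\pi^\vee_K$, and $|\cok(\pi^\vee_K)|$ equals the order of the cokernel on the invariants side by Smith normal form.

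For the identity, apply Ono's formula~(\ref{Tamagawa isogeny}) to $\pi:T^\sc\to T$ and solve for $h(T^\sc)/h(T)$. Using Lemma~\ref{wratio}, $w(T^\sc)/w(T) = [\T(\BF_q):\T^0(\BF_q)]^{-1}$, which produces the denominator in the target formula. Since $T^\sc$ is quasi-trivial by Remark~\ref{quasi-trivial Cartan}, its NR-model $\un{T^\sc}_\fp$ is connected at every $\fp$; an explicit local computation, invoking Remark~\ref{index of components} to identify $[\un{T}_\fp(\hat{\CO}_\fp):\un{T}^0_\fp(\hat{\CO}_\fp)]$ with the corresponding index for $\un{G}_\fp$, converts each local factor $L_\fp(1,\chi_{T_\fp})\om_\fp(\un{T}_\fp(\hat{\CO}_\fp))/L_\fp(1,\chi_{T^\sc_\fp})\om_\fp(\un{T^\sc}_\fp(\hat{\CO}_\fp))$ into the factor $[\un{T}_\fp(\hat{\CO}_\fp):\un{T}^0_\fp(\hat{\CO}_\fp)]$ appearing in the numerator of the target formula. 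It then remains to identify $h_\iy(G)\cdot\tau(T^\sc)/\tau(T)$ with $|\cok(\pi^\vee_K)|/(t_\iy(G)\cdot|\D|)$. Since $T^\sc$ is quasi-trivial, $\tau(T^\sc)=1$; for $\tau(T)$ I invoke the function-field version of Ono's Tamagawa number formula for tori (cf.~\cite{Behrend/Dhillon:2009}) and couple it with the long exact $\fg$-cohomology of the character sequence above, using the vanishing $H^1(\fg, X^*(T^\sc))=0$ (as $X^*(T^\sc)$ is a permutation module). This reads off $\tau(T)$ in terms of $|\cok(\pi^\vee_K)|$ and a Shafarevich--Tate-type correction which matches $|\D|$ by its definition through Bourqui's degree maps, while the factor $h_\iy(G) \cdot t_\iy(G)$ enters via the Main Theorem applied to $G$.

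The hard part will be matching the Shafarevich--Tate contribution to $\tau(T)$ precisely with $|\D|$ and verifying that the Main Theorem deposits $t_\iy(G)$ in the denominator without residual error terms: this requires the Poitou--Tate-type duality for $K$-tori in the function-field setting, together with the technique of~\cite[\S 8.2]{PR} that couples the global cohomological datum to the local component indices $[\un{T}_\fp(\hat{\CO}_\fp):\un{T}^0_\fp(\hat{\CO}_\fp)]$.
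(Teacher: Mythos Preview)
Your approach has a genuine circularity problem. The quantity $|\D|$ is defined in the paper as the cokernel of the induced map $\cok(\deg_{T^\sc})\to\cok(\deg_T)$, and it enters the paper's proof of this lemma through a snake-lemma computation on the adelic groups $T^\sc(\A)$, $T(\A)$ and their images under $\deg$ and $\psi_\A$---not through any Tamagawa-number identity. Your route, by contrast, would combine Ono's isogeny formula~(\ref{Tamagawa isogeny}), the torus Tamagawa formula, and the Main Theorem; tracing through, what you actually obtain is
\[
h_\iy(G)\cdot\frac{h(T^\sc)}{h(T)}=\frac{\tau(G^\sc)\cdot|\cok(\pi^\vee_K)|}{t_\iy(G)}\cdot\frac{\prod_\fp[\un{T}_\fp(\hat{\CO}_\fp):\un{T}^0_\fp(\hat{\CO}_\fp)]}{[\T(\BF_q):\T^0(\BF_q)]},
\]
i.e.\ the target formula with $\tau(G^\sc)$ in place of $1/|\D|$. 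To finish you would need $\tau(G^\sc)\cdot|\D|=1$, but that is precisely Proposition~\ref{D=1}, whose proof \emph{uses} the present lemma. Your proposed ``matching'' of the Shafarevich--Tate contribution in $\tau(T)$ with $|\D|$ is not a known identity and cannot be extracted from Poitou--Tate alone; you have effectively postponed the entire content of the lemma into that step.

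The paper's argument avoids Tamagawa numbers and the Weil conjecture entirely. It first expresses $h_\iy(G)$ as the index $[\psi_\A(T(\A)):\psi_\A(T^0(\A_\iy)T(K))]$ by transferring the cokernel computations from $G$ to $T$ (using that $F$ is central and $T^\sc$ is quasi-trivial), and then breaks this index into pieces via two commutative diagrams with exact rows: one snake-lemma diagram comparing $T^1(\A)\hookrightarrow T(\A)\to\Hom(X^*(T)^\fg,\Z)$ with its $T^\sc$-analogue (this is where both the injectivity of $\pi^\vee_K$ and the factor $|\cok(\pi^\vee_K)|/|\D|$ appear), and a second diagram comparing $(T^c)^0(\A)T(K)\hookrightarrow T^1(\A)$ with its $T^\sc$-analogue (producing the class-number ratio). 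The remaining factor $t_\iy(G)$ comes from the inflation--restriction sequence for $F_\iy$ at $\iy$. None of this invokes the Main Theorem.
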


\begin{proof}
Since $G$ is of non-compact type, the exact sequence of $K$-groups
\begin{equation*} 
1 \to F \to G^\sc \stackrel{\pi}{\rightarrow} G \to 1
\end{equation*}
induces the exactness over the adelic ring $\A$ 
\begin{equation*}
1 \to F(\A) \to G^\sc(\A) \stackrel{\pi_\A}{\longrightarrow} G(\A) \stackrel{\psi_\A}{\surj} \cok(\pi_\A) \subset \prod_\fp H^1(\fg_\fp,F_\fp(\hat{K}_\fp^s))
\end{equation*}
where $\fg_\fp := \mathrm{Gal}(\hat{K}_\fp^s/\hat{K}_\fp)$ -- see \cite[\S~8.2]{PR} and $3)$ 
in the proof of Thm. 3.2. in \cite{Tha} for the function field case.  
According to \cite[Proposition~8.8]{PR} one has  
\begin{equation*} 
h_\iy(G) = [\psi_\A(G(\A)) : \psi_\A(G(\A_\iy) G(K))]. 
\end{equation*}
Denote $G^0(\A_\iy) = G_\iy(\hat{K}_\iy) \times \prod_{\fp \neq \iy} \un{G}_\fp^0(\hat{\CO}_\fp)$.  
Define the finite set $S := \{ \fp \mid \fp \mbox{ ramified} \}$. 
If $S = \emptyset$ then $G^0(\A_\iy) = G(\A_\iy)$ (see Remark \ref{G = G0 in unramified extension}). 
Otherwise, by the Borel density theorem (e.g.\ in the guise of \cite[Thm.~2.4,~Prop.~2.8]{Caprace/Monod:2009}) 
$\CG(\CO_{\{\iy \cup S})$ is Zariski-dense in $\prod_{\fp \in S \backslash \{ \iy\}}\un{G}_\fp$. 
This implies the equality $G(\A_\iy)G(K) = G^0(\A_\iy)G(K)$, and so
\begin{equation} \label{G class number}
h_\iy(G) = [\psi_\A(G(\A)) : \psi_\A(G^0(\A_\iy) G(K))]. 
\end{equation}  
Since $F$ is central in $G^\sc$, it is embedded in $T^\sc$. 
The corresponding exact sequence of $K$-groups of multiplicative type
$$ 1 \to F \to T^\sc \stackrel{\pi}{\rightarrow} T \to 1 $$
induces by $\fg$-cohomology the exact sequences over $K$ (see Remark \ref{H1 trivial over Kp}):
\begin{align*} 
1 \to F(K) \to G^\sc(K) \stackrel{\pi}{\rightarrow} G(K) \stackrel{\dl_K}{\longrightarrow} H^1(\fg,F(K^s)) \to 1 \\ \nonumber
1 \to F(K) \to T^\sc(K) \stackrel{\pi}{\rightarrow} T(K) \stackrel{\dl_K}{\longrightarrow} H^1(\fg,F(K^s)) \to 1 
\end{align*}
showing that $\dl_K(G(K)) = \dl_K(T(K))$. 
At any $\fp$, as $\un{G^\sc}_\fp$ is connected, by Lemma \ref{H1 connected trivial1} 
and Remark \ref{H1 trivial over Kp} one has
\begin{align*} 
\cok[\un{G^\sc}_\fp(\hat{\CO}_\fp) \to \un{G}^0_\fp(\hat{\CO}_\fp)] &= \cok[\un{T^\sc}_\fp(\hat{\CO}_\fp) \to \un{T}^0_\fp(\hat{\CO}_\fp)] = H^1(\la \s_\fp \ra,\un{F}_\fp(\hat{\CO}_\fp^\sh)), \\ \nonumber
\cok[G^\sc_\fp(\hat{K}_\fp) \to G_\fp(\hat{K}_\fp)] &= \cok[T^\sc_\fp(\hat{K}_\fp) \to T_\fp(\hat{K}_\fp)] = H^1(\fg_\fp,F_\fp(\hat{K}_\fp^s)).  
\end{align*}
Thus together with 
$[\un{G}_\fp(\hat{\CO}_\fp):\un{G}^0_\fp(\hat{\CO}_\fp)]= [\un{T}_\fp(\hat{\CO}_\fp):\un{T}_\fp^0(\hat{\CO}_\fp)]$ (see Remark \ref{index of components}), we may infer that
$$ \psi_\A(G(\A)) = \cok[G^\sc(\A) \to G(\A)] = \cok[T^\sc(\A) \to T(\A)] = \psi_\A(T(\A)). $$
In particular, over $\A_\iy$, due to Corollary \ref{surjection of local universal cover} Galois cohomology yields an exact sequence 
\begin{equation*} 
1 \to F(\A_\iy) \to G^\sc(\A_\iy) \stackrel{\pi_\A}{\rightarrow} G^0_\iy(\A_\iy) \stackrel{\psi_\A}{\longrightarrow} H^1(\fg_\iy,F_\iy(\hat{K}_\iy^s)) \times 
\prod_{\fp \neq \iy} H^1(\la \s_\fp \ra,\un{F}_\fp(\hat{\CO}_\fp^\sh)) \to 1
\end{equation*}
and similarly for the tori, showing that $\psi_\A(G^0(\A_\iy) = \psi_\A(T^0(\A_\iy))$. 
These cokernel equalities enable us to express $h_\iy(G)$ as given in (\ref{G class number}) via $T$, namely
\begin{equation} \label{G class number via T}
h_\iy(G) = [\psi_\A(T(\A)) : \psi_\A(T^0(\A_\iy) T(K))]. 
\end{equation}  
Applying the Snake Lemma on its two middle rows, we get the exactness of the diagram
$$\xymatrix{
1 \ar[r] & F(\A) \ar[r] \ar@{=}[d] & (T^\sc)^1(\A) \ar[r]^{\pi_\A} \ar@{^{(}->}[d] & T^1(\A) \ar[r]^{\psi_\A} \ar@{^{(}->}[d] & \psi_\A(T^1(\A)) \ar[r] \ar@{^{(}->}[d] & 1 \\ 
1 \ar[r] & F(\A) \ar[r] \ar[d] & T^\sc(\A) \ar[r]^{\pi_\A} \ar[d]^{\deg_{T^\sc}} & T(\A) \ar[r]^{\psi_\A} \ar[d]^{\deg_T} 
         & \psi_\A(T(\A)) \ar[r] \ar[d] & 1 \\
         & 0     \ar[r]  & \Hom(X^*(T^\sc)^\fg,\Z) \ar[r]^{\pi^\vee_K} \ar@{->>}[d] &\Hom(X^*(T)^\fg,\Z) \ar[r] \ar@{->>}[d] & \cok(\pi^\vee_K) \ar[r] \ar@{->>}[d] & 0 \\
         &               & \cok(\deg_{T^\sc})      \ar[r]^{\ov{\pi}^\vee_K}  &\cok(\deg_T)  \ar[r]  & \D \ar[r] & 0
}$$
(note that the elements in $\ker(\pi_\A)$ are units, and so belong to $T^1(\A)$)  
from which we see that: 
\begin{equation} \label{cok wh pi K}
[\psi_\A(T(\A)):\psi_\A(T^1(\A))] = |\cok(\pi^\vee_K)|/|\D|. 
\end{equation} 

Furthermore, from the following exact diagram
$$\xymatrix{
1 \ar[r] & F(\A) \ar[r] \ar@{=}[d] & (T^\sc)^c(\A)T^\sc(K) \ar[r]^{\pi_\A} \ar@{^{(}->}[d] & (T^c)^0(\A)T(K) \ar[r]^{\psi_\A} \ar@{^{(}->}[d] & \psi_\A((T^c)^0(\A)T(K)) \ar[r] \ar@{^{(}->}[d] & 1 \\ 
1 \ar[r] & F(\A) \ar[r] \ar[d] & (T^\sc)^1(\A) \ar[r]^{\pi_\A} \ar[d] & T^1(\A) \ar[r]^{\psi_\A} \ar[d] &\psi_\A(T^1(\A)) \ar[r] \ar[d] & 1 \\
         & 1     \ar[r] & \text{Cl}(T^\sc)  \ar[r]  & \text{Cl}(T^0) \ar[r] & \psi_\A(T^1(\A))/\psi_\A((T^c)^0(\A)T(K))  \ar[r]  & 1
}$$
with $(T^c)^0(\A) := \prod_\fp \un{T}_\fp^0(\hat{\CO}_\fp)$ one can see that
\begin{equation} \label{tori class numbers ratio}
\fc{h(T)}{h(T^\sc)} = \fc{h(T^0)/h(T^\sc)}{[T^c(\A)T(K) : (T^c)^0(\A)T(K)]} = \fc{[\psi_\A(T^1(\A)):\psi_A((T^c)^0(\A)T(K))]}{[T^c(\A)T(K) : (T^c)^0(\A)T(K)]}.  
\end{equation}
Using the third and second isomorphism Theorems one has 
\begin{align*}
T^c(\A)T(K) \Big/ (T^c)^0(\A)T(K) \cong T^c(\A)T(K)/T(K) \Big/ (T^c)^0(\A)T(K)/T(K) 
\cong T^c(\A)/\T(\BF_q) \Big/ (T^c)^0(\A)/\T^0(\BF_q) 
\end{align*}
whence
\begin{equation} \label{hT via hT0}
[T^c(\A)T(K) : (T^c)^0(\A)T(K)] = \fc{\prod_\fp [\un{T}_\fp(\hat{\CO}_\fp):\un{T}^0_\fp(\hat{\CO}_\fp)]}{[\T(\BF_q):\T^0(\BF_q)]}.
\end{equation}
Similarly, 
\begin{align} \label{TAiy via TcA}
       T^0(\A_\iy)T(K) \Big/ (T^c)^0(\A)T(K) &\cong T^0(\A_\iy)T(K)/T(K) \Big/ (T^c)^0(\A)T(K)/T(K) \\ \nonumber
&\cong T^0(\A_\iy)/\T^0(\BF_q) \Big/ (T^c)^0/\T^0(\BF_q) \cong T^0(\A_\iy)/ (T^c)^0(\A). 
\end{align}
In order to compute the cardinality of the latter ratio image under $\psi$, we may use cohomology again. 
Fix a separable closure $\hat{K}_\iy^s$ of $\hat{K}_\iy$ containing the maximal unramified extension $\hat{K}_\iy^\ur$ of $\hat{K}_\iy$ 
with absolute Galois group $\fg_\iy$ and inertia subgroup $I_\iy = \mathrm{Gal}(\hat{K}_\iy^s/\hat{K}_\iy^\ur)$. 
The spectral sequence then induces the exact sequence (see \cite[I.2.6(b)]{Ser3})
\begin{align*}
0 \to H^1(\la \s_\iy \ra,F_\iy(\hat{K}_\iy^\ur)) \stackrel{\text{inf}}{\rightarrow} H^1(\fg_\iy,F_\iy(\hat{K}_\iy^s)) \stackrel{\text{res}}{\rightarrow} H^1(I_\iy,F_\iy(\hat{K}_\iy^s))^{\s_\iy} \to H^2(\la \s_\iy \ra,F_\iy(\hat{K}_\iy^\ur)) = 0 
\end{align*}
which shows that 
\begin{equation} \label{dl T0(Aiy):dl T0(Ac)} 
[\psi_\A(T^0(\A_\iy)) : \psi_\A((T^c)^0(\A))] = \fc{|H^1(\fg_\iy,F_\iy(\hat{K}_\iy^s))|}{|H^1(\la \s_\iy \ra,\un{F}_\iy(\hat{\CO}_\iy^\sh))|} =  |H^1(I_\iy,F_\iy(\hat{K}_\iy^s))^{\s_\iy}| 
                                                   \stackrel{(\ref{what is k})}{=} t_\iy(G)
\end{equation}
All together we finally get 
\begin{align*}
h_\iy(G) \cdot \fc{h(T^\sc)}{h(T)} 
 &\stackrel{(\ref{G class number via T}),(\ref{tori class numbers ratio}),(\ref{hT via hT0})}
                   {=}                \fc{[\psi_\A(T(\A)):\psi_\A(T^0(\A_\iy)T(K))]} {[\psi_\A(T^1(\A)):\psi_\A((T^c)^0(\A)T(K))]} \cdot 
                                      \fc{\prod_\fp [\un{T}_\fp(\hat{\CO}_\fp):\un{T}^0_\fp(\hat{\CO}_\fp)]}{[\T(\BF_q):\T^0(\BF_q)]}   \\ \nonumber  
                   &=                 \fc{[\psi_\A(T(\A)):\psi_\A(T^1(\A))]} {[\psi_\A(T^0(\A_\iy)T(K)):\psi_\A((T^c)^0(\A)T(K))]}  \cdot 
                                      \fc{\prod_\fp [\un{T}_\fp(\hat{\CO}_\fp):\un{T}^0_\fp(\hat{\CO}_\fp)]}{[\T(\BF_q):\T^0(\BF_q)]} \\ \nonumber
 &\stackrel{(\ref{TAiy via TcA})}{=}  \fc{[\psi_\A(T(\A)):\psi_\A(T^1(\A))]} {[\psi_\A(T^0(\A_\iy)):\psi_\A((T^c)^0(\A))]}  \cdot 
                                      \fc{\prod_\fp [\un{T}_\fp(\hat{\CO}_\fp):\un{T}^0_\fp(\hat{\CO}_\fp)]}{[\T(\BF_q):\T^0(\BF_q)]} \\ \nonumber
 &\stackrel{(\ref{cok wh pi K}),(\ref{dl T0(Aiy):dl T0(Ac)})} {=} \fc{|\cok(\pi^\vee_K)|}{|\D| \cdot t_\iy(G)} \cdot 
                                      \fc{\prod_\fp [\un{T}_\fp(\hat{\CO}_\fp):\un{T}^0_\fp(\hat{\CO}_\fp)]}{[\T(\BF_q):\T^0(\BF_q)]}. \qedhere
\end{align*} 
\end{proof}

The following proposition now is an immediate consequence of the Main Theorem, Lemma \ref{class number ratio}.

\begin{prop} \label{D=1}
$|\D|=|\cok(\ov{\pi}^\vee_K)=1$. 
\end{prop}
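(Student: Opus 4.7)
The plan is to pin down $|\D|$ by equating two independent computations of $\tau(G)/\tau(G^\sc)$.

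First, in the present quasi-split, $g=0$ regime, Lemma \ref{Fiscoh2} gives $j_\iy(G)=1$, so the Main Theorem combined with Weil's conjecture $\tau(G^\sc)=1$ yields
$$ \tau(G)/\tau(G^\sc) = h_\iy(G) \cdot t_\iy(G). $$
On the other hand, by Sansuc's theorem in its function-field form \cite[Theorem~6.1]{Behrend/Dhillon:2009}, this ratio coincides with $\tau(T)/\tau(T^\sc)$ for the compatible pair of Cartan subgroups. Ono's formula (\ref{Tamagawa isogeny}) expresses $\tau(T)/\tau(T^\sc)$ as
$$ \tau(T)/\tau(T^\sc) = \frac{w(T^\sc)}{w(T)} \cdot \frac{h(T)}{h(T^\sc)} \cdot \prod_\fp \frac{L_\fp(1,\chi_{T_\fp})\,\om_\fp(\un{T}_\fp(\hat{\CO}_\fp))}{L_\fp(1,\chi_{T^\sc_\fp})\,\om_\fp(\un{T^\sc}_\fp(\hat{\CO}_\fp))}. $$

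Next, I substitute into this equality the identity $w(T)/w(T^\sc) = [\T(\BF_q):\T^0(\BF_q)]$ from Lemma \ref{wratio} and the expression for $h_\iy(G) \cdot h(T^\sc)/h(T)$ from Lemma \ref{class number ratio}. After the ratios $h(T)/h(T^\sc)$ and the finite-field index $[\T(\BF_q):\T^0(\BF_q)]$ cancel on both sides, the resulting identity reduces to
$$ \frac{|\cok(\pi^\vee_K)|}{|\D|} \cdot \prod_\fp [\un{T}_\fp(\hat{\CO}_\fp):\un{T}^0_\fp(\hat{\CO}_\fp)] = \prod_\fp \frac{L_\fp(1,\chi_{T_\fp})\,\om_\fp(\un{T}_\fp(\hat{\CO}_\fp))}{L_\fp(1,\chi_{T^\sc_\fp})\,\om_\fp(\un{T^\sc}_\fp(\hat{\CO}_\fp))}. $$
It therefore suffices to verify that the displayed identity holds with $|\D|=1$, i.e., that the Euler product on the right equals $|\cok(\pi^\vee_K)|$ times the product of local indices of connected components of the N\'eron--Raynaud models.

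The final step is this local--global verification. Place-by-place, Proposition \ref{volume of identity component} (applied to the tori) computes $\om_\fp(\un{T}^0_\fp(\hat{\CO}_\fp)) = |\ov{T}^0_\fp(k_\fp)| \cdot |k_\fp|^{-\dim T_\fp}$, while the local $L$-factor $L_\fp(1,\chi_{T_\fp})$ is the reciprocal characteristic polynomial of Frobenius acting on $X_*(T_\fp)^{I_\fp}$ evaluated at $|k_\fp|^{-1}$. Together with the exact sequence (\ref{cocharacters sequence2}), which identifies $\cok(\pi^\vee)$ with the Pontryagin dual $\wh{F}$, the quotient of local factors at $\fp$ matches $[\un{T}_\fp(\hat{\CO}_\fp):\un{T}^0_\fp(\hat{\CO}_\fp)]$ times the local contribution $\det(1-\s_\fp\mid \wh{F}_\fp^{I_\fp})$. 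The main obstacle is the final global assembly: one must show that the Euler product of these local Frobenius determinants coincides with the global invariant $|\cok(\pi^\vee_K)|$ attached to the embedding $X_*(T^\sc)^\fg \hookrightarrow X_*(T)^\fg$. This is a consequence of the product formula for the Artin $L$-function of $\wh{F}$, valid since $C$ has genus $0$. Once established, $|\D|=1$ follows at once, and the surjectivity of $\ov{\pi}^\vee_K$ in the diagram of Lemma \ref{class number ratio} is immediate.
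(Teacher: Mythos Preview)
Your argument has a genuine gap at the second step. The Behrend--Dhillon formula you cite is not $\tau(G)/\tau(G^\sc)=\tau(T)/\tau(T^\sc)$ but rather
\[
\tau(G)=\tau(G^\sc)\cdot\frac{\tau(T)}{\tau(T^\sc)}\cdot|\cok(\wh{\pi}_K)|,
\]
see formula~(\ref{BD}). The factor $|\cok(\wh{\pi}_K)|$ is in general nontrivial: for split $G$ it equals $|F|$. Because you drop it, you are forced in your ``final step'' to extract a factor $|\cok(\pi^\vee_K)|$ from the local Euler product, and this is where the argument breaks down. Your claimed local identity, that the quotient of $L$-weighted volumes equals $[\un{T}_\fp(\hat{\CO}_\fp):\un{T}^0_\fp(\hat{\CO}_\fp)]\cdot\det(1-\s_\fp\mid\wh{F}_\fp^{I_\fp})$, is not correct: since $T^\sc$ is quasi-trivial its N\'eron--Raynaud model is connected (Remark~\ref{quasi-trivial Cartan}), and by \cite[3.2]{Bit} one has the exact equality
\[
\frac{L_\fp(1,\chi_{T_\fp})\,\om_\fp(\un{T}_\fp(\hat{\CO}_\fp))}{L_\fp(1,\chi_{T^\sc_\fp})\,\om_\fp(\un{T^\sc}_\fp(\hat{\CO}_\fp))}=[\un{T}_\fp(\hat{\CO}_\fp):\un{T}^0_\fp(\hat{\CO}_\fp)],
\]
with no residual Frobenius determinant. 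Consequently there is nothing left over whose global product could furnish $|\cok(\pi^\vee_K)|$, and the invocation of a ``product formula for the Artin $L$-function of $\wh{F}$'' is both ill-defined (the quantity $\det(1-\s_\fp\mid\wh{F}_\fp^{I_\fp})$ makes no sense as written for a finite module) and unnecessary.

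The fix is simple and is exactly what the paper does: keep the factor $|\cok(\wh{\pi}_K)|$ from (\ref{BD}), observe via the Pontryagin-duality argument with $\Ext^1(M,\Z)\cong\Hom(M,\BQ/\Z)$ applied to sequence~(\ref{global character groups}) that $|\cok(\wh{\pi}_K)|=|\cok(\pi^\vee_K)|$, and then use the local identity from \cite[3.2]{Bit} above. Substituting into Ono's formula~(\ref{Tamagawa isogeny}), then Lemma~\ref{wratio}, then Lemma~\ref{class number ratio}, yields directly $\tau(G)=\tau(G^\sc)\cdot h_\iy(G)\cdot t_\iy(G)\cdot|\D|$, and comparison with the Main Theorem gives $|\D|=1/\tau(G^\sc)=1$.
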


\begin{proof}
Following
\cite{Ono3} 
by the proof of Theorem 6.1 in \cite{Behrend/Dhillon:2009} one has
\begin{equation} \label{BD}
\tau(G) = \tau(G^\sc) \cdot \fc{\tau(T)}{\tau(T^\sc)} \cdot |\cok(\wh{\pi}_K)|.  
\end{equation} 
Applying the functor $\Hom(-,\Z)$ on the sequence:  
\begin{equation} \label{global character groups}
0 \to X^*(T)^\fg \stackrel{\wh{\pi}_K}{\longrightarrow} X^*(T^\sc)^\fg \to M := \cok(\wh{\pi}_K) \to 0   
\end{equation} 
gives rise to the exact sequence 
$$ 
0 \to 0 = \Hom(M,\Z) \to \Hom(X^*(T^\sc)^\fg,\Z) \stackrel{\pi^\vee_K}{\longrightarrow} \Hom(X^*(T)^\fg,\Z) \to \Ext^1(M,\Z) \cong \Hom(M,\BQ/\Z) \to 0 
$$ 
which shows that $\cok(\pi^\vee_K)$ is the Pontryagin dual of $\cok(\wh{\pi}_K)$. 
As both groups are finite, they therefore have the same cardinality.  
Hence from formula (\ref{BD}) we get 
\begin{eqnarray*}
\tau(G) & = 
  & \tau(G^\sc) \cdot |\cok(\pi^\vee_K)| \cdot \fc{\tau(T)}{\tau(T^\sc)}  \\ \nonumber
  & \stackrel{(\ref{Tamagawa isogeny})}{=} 
  & \tau(G^\sc) \cdot |\cok(\pi^\vee_K)| \cdot \fc{h(T)}{h(T^\sc)} \cdot \fc{w(T^\sc)}{w(T)} 
    \prod_\fp L_\fp(1,\chi_{T_\fp}) \cdot \om_\fp(\un{T}_\fp(\hat{\CO}_\fp)) \\   \nonumber
  & \stackrel{\mbox{\cite[3.2]{Bit}}}{=} 
  & \tau(G^\sc) \cdot |\cok(\pi^\vee_K)| \cdot \fc{h(T)}{h(T^\sc)} \cdot \fc{w(T^\sc)}{w(T)} 
    \prod_\fp [\un{T}_\fp(\hat{\CO}_\fp):\un{T}_\fp^0(\hat{\CO}_\fp)]  \\  \nonumber
  & \stackrel{\ref{wratio}}{=} 
  & \tau(G^\sc) \cdot |\cok(\pi^\vee_K)| \cdot \fc{h(T)}{h(T^\sc)} \cdot 
    \fc{\prod_\fp [\un{T}_\fp(\hat{\CO}_\fp):\un{T}_\fp^0(\hat{\CO}_\fp)]}{[\T(\BF_q):\T^0(\BF_q)]}  \\  \nonumber
  & \stackrel{\ref{class number ratio}}{=}  
  & \tau(G^\sc) \cdot h_\iy(G) \cdot t_\iy(G) \cdot |\D| \\ \nonumber
  & \stackrel{\ref{maintheorem}}{=}  
  & \tau(G^\sc) \cdot \tau(G) \cdot |\D| . 
\end{eqnarray*} 
This implies $|D| = \frac{1}{\tau(G^\sc)}=1$ due to the Weil conjecture. 
\end{proof}

\begin{remark}
Any isogenous $K$-tori $T^\sc$ and $T$ with $T^\sc$ quasi-trivial 
can be realized as Cartan subgroups of semisimple and quasi-split groups $G^\sc$ and $G$ respectively, 
with $G^\sc$ simply connected. 
E.g., given the isogeny $\pi:T^\sc \to T$, then each factor $R_{L/K}(\BG_m^d)$ in $T^\sc$, 
is a Cartan subgroup of the quasi-split and simply connected group $G^\sc = R_{L/K}(\textbf{SL}_{d+1})$,   
and $T$ is a Cartan subgroup of $G = G^\sc/\ker(\pi)$, cf. Examples \ref{quadratic non split} -- \ref{generalized example} below. 
Hence we may generalize Proposition~\ref{D=1} to the statement that for any isogeny $T^\sc \to T$, 
the induced map $\cok(\deg_{T^\sc}) \to \cok(\deg_T)$ is surjective.  
\end{remark}

Quite naturally, our Main Theorem reproduces the following well-known facts. 
Recall that in this section we assume the validity of the Weil Conjecture.

\begin{cor} \label{G split}
If $G$ is $K$-split and $g=0$ then $h_\iy(G)=1$ and $\tau(G) = t_\iy(G) = |F|$. 
\end{cor}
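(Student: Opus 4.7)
The plan is to specialize the Main Theorem to split $G$. Since split implies quasi-split and we are given $g=0$, the Main Theorem reduces to
$$\tau(G) = h_\iy(G) \cdot t_\iy(G) = h_\iy(G) \cdot |\wh{F_\iy}^{\fg_\iy}|,$$
so it suffices to evaluate $t_\iy(G)$ and to establish $h_\iy(G)=1$.

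The value $t_\iy(G)=|F|$ is immediate from splitness: the center $F$ is a finite $\BF_q$-constant group scheme of multiplicative type (with order coprime to $\mathrm{char}(K)$), hence $\fg_\iy$ acts trivially on $F_\iy(\hat{K}_\iy^s)$ and on its character group, giving $|\wh{F_\iy}^{\fg_\iy}| = |\wh{F_\iy}| = |F_\iy| = |F|$.

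For the claim $h_\iy(G)=1$, I would substitute into Lemma \ref{class number ratio} in conjunction with Proposition \ref{D=1}. Splitness of $T^\sc$ and $T$ forces each local N\'eron--Raynaud model to be the standard connected scheme $\BG_m^{\rank}$, so both the local index factor $[\un{T}_\fp(\hat{\CO}_\fp):\un{T}_\fp^0(\hat{\CO}_\fp)]$ at every $\fp$ and the factor $[\T(\BF_q):\T^0(\BF_q)]$ are $1$. Triviality of the Galois action on $X^*(T^\sc)$ and $X^*(T)$ lets me apply $\Hom(-,\Z)$ to $0 \to X^*(T) \to X^*(T^\sc) \to \wh{F} \to 0$ and read off $|\cok(\pi^\vee_K)| = |\Ext^1(\wh{F},\Z)| = |\wh{F}| = |F|$, which exactly cancels the $t_\iy(G)=|F|$ in the denominator. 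With $|\D|=1$ supplied by Proposition \ref{D=1}, the formula of Lemma \ref{class number ratio} collapses to $h_\iy(G) = h(T^\sc)/h(T)$.

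The remaining point, and the one most worth verifying carefully, is $h(T^\sc)=h(T)=1$ for split tori. Unwinding the definition for $\BG_m^r$ gives $h(\BG_m^r) = |\mathrm{Pic}^0(C)(\BF_q)|^r$, and when $g=0$ the curve $C$ is $\BP^1$ (every genus-zero smooth projective curve over a finite field has a rational point), so $\mathrm{Pic}^0(\BP^1)(\BF_q)$ is trivial and $h(\BG_m^r)=1$. Assembling the pieces yields $h_\iy(G)=1$, and the Main Theorem then gives $\tau(G) = 1 \cdot |F| = |F|$.
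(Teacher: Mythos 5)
Your proposal follows essentially the same route as the paper: specialize the Main Theorem to the quasi-split, genus-$0$ case, evaluate $t_\iy(G)=|F|$ via triviality of the $\fg_\iy$-action, and then feed the split data into Lemma~\ref{class number ratio} together with Proposition~\ref{D=1} to force $h_\iy(G)=1$. The computation $|\cok(\pi^\vee_K)|=|F|$ and the observation that all local index factors $[\un{T}_\fp(\hat{\CO}_\fp):\un{T}_\fp^0(\hat{\CO}_\fp)]$ and $[\T(\BF_q):\T^0(\BF_q)]$ are trivial for split tori match the paper exactly.

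Two small remarks. First, you wrote that the formula ``collapses to $h_\iy(G) = h(T^\sc)/h(T)$'', but Lemma~\ref{class number ratio} reads $h_\iy(G)\cdot\fc{h(T^\sc)}{h(T)} = (\text{RHS})$, so when the right-hand side equals $1$ you get $h_\iy(G) = h(T)/h(T^\sc)$; you inverted the ratio. This has no effect on the conclusion since you go on to show both class numbers equal $1$, but the slip is worth noting. Second, the paper contents itself with asserting $h(T)=h(T^\sc)$ for split tori (true for any genus, since isogenous split tori have the same rank and $h(\BG_m^r)=|\mathrm{Pic}^0(C)|^r$), whereas you additionally invoke $g=0$ to pin down $h(T)=h(T^\sc)=1$ outright via $\mathrm{Pic}^0(\BP^1)=0$. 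Your version is a touch more explicit; the paper's version is marginally weaker and would then deduce $h_\iy(G)=1/|\D|$, which forces both to be $1$ by integrality. Both are correct.
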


\begin{proof}
If $G$ is $K$-split, then $T^\sc$ and $T$ are $K$-split thus having connected reduction everywhere and $h(T) = h(T^\sc)$. 
Furthermore, $|\cok(\pi^\vee_K)| = |F| = |F_\iy| = t_\iy(G)$ whence by Lemma \ref{class number ratio} $h_\iy(G) = 1$.  
Hence according to the Main Theorem \ref{maintheorem} we get $\tau(G) = t_\iy(G) = |F_\iy| = |F|$. 
\end{proof}

\begin{remark} \label{PGLn over elliptic curve}
We have assumed in this section that the genus $g$ of $C$ is $0$. 
Otherwise, if $g>0$, $h_\iy(G)$ does not need to be $1$, though $G$ splits over $K$.  
For example, let $G=\textbf{PGL}_n$ defined over $K = \BF_q(C)$ where $C$ is an elliptic curve ($g=1$). 
Let $\iy$ be a $K$-rational point and let $\CG$ be an affine, 
smooth, flat, connected and of finite type model of $G$ defined over $\Sp \Oiy$ 
as been constructed above. 
Let $\CGL_n$ be a similar construction for $\textbf{GL}_n$ and $\CG_m$ for $\BG_m$.  
According to Nisnevich exact sequence (see \cite[3.5.2]{Nis} and also \cite[Thm.~3.4]{Gon}),  
since the Shafarevich-Tate group w.r.t. $S=\{\iy\}$ is trivial in this split case, we have: 
\begin{equation*}
\mathrm{Cl}_\iy(G) \cong H^1_\et(\Oiy,\CG). 
\end{equation*}
The exact sequence of smooth $\Oiy$-groups
\begin{equation*}
1 \to \CG_m \to \CGL_n \to \CG \to 1, 
\end{equation*}
gives rise by flat cohomology to the following exact sequence 
\begin{equation*}
\Pic(C^\af) \stackrel{\partial}{\to} H^1_\et(\Oiy,\CGL_n) \stackrel{\delta}{\to} H^1_\et(\Oiy,\CG) \to H^2_\et(\Oiy,\BG_m) 
\end{equation*} 
on which $H^1_\et(\Oiy,\CGL_n)$ classifies the rank $n$ vector bundles defined over $C^\af := C - \{\iy\}$. 
Every rank-$n$ vector bundle over a Dedekind domain is a direct sum $\CO_{C^\af}^{n-1} \oplus \CL$, 
where $[\CL] \in \Pic(C^\af)$ and $\CO_{C^\af}$ is the trivial line bundle.  
As $\partial: [\CL] \mapsto n \CL$ we have: 
$$ 
\im(\delta) \cong H^1_\et(\Oiy,\CGL_n)/\ker(\delta) = H^1_\et(\Oiy,\CGL_n)/\im(\partial) \cong \Pic(C^\af)[n] := \Pic(C^\af) / n\Pic(C^\af). 
$$  
Moreover, as $C^\af$ is smooth, one has (see \cite[Prop.~2.15]{Mil1}):
$H^2_\et(\Oiy,\CG_m) = \mathrm{Br}(\Oiy)$,  
classifying Azumaya $\Oiy$-algebras (see \cite[\S~2]{Mil1}). 
At each prime $\fp$: $\mathrm{Br}(\Oiy) \subset \mathrm{Br}((\Oiy)_\fp) = \mathrm{Br}(\CO_\fp)$.  
Since $\CO_\fp$ is complete, the latter group is isomorphic to $\mathrm{Br}(k_\fp)$ 
(see \cite[Thm.~6.5]{AG:1960}).  
But $k_\fp$ is a finite field thus $\mathrm{Br}(k_\fp)$ is trivial as well as $H^2_\et(\Oiy,\BG_m)$ 
and $\delta$ is surjective.     
We get that $h_\iy(G) = |\mathrm{Cl}_\iy(G)| = |H^1_\et(\Oiy,\CG)| = |\Pic(C^\af)[n]|$. 
In order to compute this group, as we assumed $\iy$ is $K$-rational 
and $(\text{char}(K),|F|)=1$,  
the restrictions of $C$ to $C^\af$ and of $\Pic(C)$ for $\Pic^0(C)$ inducing the exact sequences 
\begin{align*}
0 \to \Z \to \Pic(C) &\to \Pic(C^\af) \to 0 \\ \nonumber
0 \to \Z \to \Pic(C) &\to \Pic^0(C) \to 0 
\end{align*}
give an isomorphism $\Pic(C^\af) \cong \Pic^0(C) \cong C(\BF_q)$.  
So it is easy to find an elliptic curve $C$ for which $h_\iy(G) = |C(\BF_q)[n]| > 1$. \\
Applying flat cohomolopgy on Kummer's exact sequence of $\Oiy$-schemes:
$$ 1 \to \mu_n \to \CG_m \stackrel{x \mapsto x^n}{\longrightarrow} \CG_m $$
gives rise to the exact sequence of groups of $\Oiy$-points:
$$ 
(\Oiy)^\times \stackrel{x \mapsto x^n}{\longrightarrow} (\Oiy)^\times 
                  \to H_\et^1(\Oiy,\mu_n) \to \Pic(C^\af) \stackrel{z \mapsto nz}{\longrightarrow} \Pic(C^\af) 
$$
which in light of the proof of Lemma \ref{Fiscoh2} can we rewritten as
\begin{equation*} 
1 \to \BF_q^\times / (\BF_q^\times)^n \to H_\et^1(\Oiy,\mu_n) \to \Pic(C^\af)[n] \to 0.   
\end{equation*}
We deduce that $H^1_\et(\Oiy,\mu_n)$ is an extension of $\BF_q^\times/(\BF_q^\times)^n$ 
by $\Pic(C^\af)[n]$ and so  
$$ 
|H^1_\et(\Oiy,\mu_n)| = |\BF_q^\times/(\BF_q^\times)^n| \cdot |\Pic(C^\af)[n]| = |H^1(\BF_q,\mu_n)| \cdot |\Pic(C^\af)[n]|. 
$$
Consequently 
$$ 
j_\iy(G) = \fc{|H^1_\et(\Oiy,\mu_n)|}{|\mu_n(\BF_q)|} = \fc{|H^1_\et(\Oiy,\mu_n)|}{|H^1(\BF_q,\mu_n)|} = |\Pic(C^\af)[n]| = h_\iy(G)
$$ 
and finally:  
$$ \tau(G) = h_\iy(G) \cdot \fc{t_\iy(G)}{j_\iy(G)} = |\wh{F_\iy}^{\fg_\iy}| = |F| = n. $$
\end{remark}

\begin{cor} \label{G adjoint}
If $G$ is adjoint (not necessarily split) and $g=0$ then $h_\iy(G)=1$ and $\tau(G) = t_\iy(G) = |\wh{F}^\fg|$, 
where $\wh{F}:=\mathrm{Hom}(F(K^s),\BG_{m,K^s})$ and $\fg := \mathrm{Gal}(K^s/K)$.   
\end{cor}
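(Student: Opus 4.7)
The plan is to apply the Main Theorem in its quasi-split, genus-$0$ form (which is the standing assumption of Section~\ref{section7}) to reduce the corollary to two separate claims: $h_\iy(G)=1$ and $|\wh{F_\iy}^{\fg_\iy}|=|\wh F^\fg|$. Once these are established, the first gives $\tau(G)=|\wh{F_\iy}^{\fg_\iy}|=t_\iy(G)$, and the second identifies this with $|\wh F^\fg|$, yielding both advertised equalities at once.

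For the class number, I would substitute into the formula of Lemma~\ref{class number ratio} using $|\D|=1$ from Proposition~\ref{D=1}. Since $G^\sc$ is quasi-split simply connected, Remark~\ref{quasi-trivial Cartan} gives that $T^\sc$ is quasi-trivial, and in genus $0$ this forces $h(T^\sc)=1$ exactly as in the proof of Lemma~\ref{Fiscoh2} (via the vanishing of $\Pic$ of an affine rational curve combined with Shapiro's lemma). For the numerator factor $|\cok(\pi^\vee_K)|$, Pontryagin duality (as used in the proof of Proposition~\ref{D=1}) gives $|\cok(\pi^\vee_K)|=|\cok(\wh\pi_K)|$; and because $X^*(T^\sc)$ is a permutation $\fg$-module (so $H^1(\fg,X^*(T^\sc))=0$), Galois cohomology of
$$0\to X^*(T)\to X^*(T^\sc)\to\wh F\to 0$$
identifies $\cok(\wh\pi_K)$ with $\wh F^\fg$. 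Here the adjoint hypothesis enters essentially: for adjoint $G$ the lattice $X^*(T)$ is the root lattice $Q$, $X^*(T^\sc)$ is the weight lattice $P$, and $\wh F=P/Q$ is the fundamental group, giving explicit control over the $\fg$-module structure.

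The remaining work is to show that the residual factor
$$h(T)\cdot\frac{\prod_\fp [\un{T}_\fp(\hat\CO_\fp):\un{T}_\fp^0(\hat\CO_\fp)]}{[\T(\BF_q):\T^0(\BF_q)]}$$
equals $|\wh{F_\iy}^{\fg_\iy}|/|\wh F^\fg|$, so that the substitution into Lemma~\ref{class number ratio} simultaneously forces $h_\iy(G)=1$ and $|\wh{F_\iy}^{\fg_\iy}|=|\wh F^\fg|$. The local indices are identified with $|H^1(I_\fp,X^*(T))^{\s_\fp}|$ via \cite[Corollary~3.2]{Bit}, the unit index $[\T(\BF_q):\T^0(\BF_q)]$ is computed by Lang's theorem applied to $\T^0$ (cf.\ Lemma~\ref{H1 connected trivial1}), and $h(T)$ in genus $0$ is controlled via Ono's formula applied to the adjoint torus, whose character lattice is the root lattice.

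The main obstacle is precisely this final telescoping: matching the global class number $h(T)$ of the adjoint Cartan against the discrepancy between the local invariant $|\wh{F_\iy}^{\fg_\iy}|$ at $\iy$ and the global invariant $|\wh F^\fg|$, with the local ramification indices compensating through the exact sequence \eqref{t_iyG}
$$0\to X_*(T^\sc)_{I_\iy}^{\s_\iy}\to X_*(T)_{I_\iy}^{\s_\iy}\to\wh{F_\iy}^{\fg_\iy}\to 0$$
and its global analog. The key structural reason the identification succeeds in the adjoint case is that $\wh F$, being $P/Q$, depends only on the root datum, and the Galois-permutation structure inherited from $T^\sc$ quasi-trivial cleanly separates the local-from-global contributions in genus $0$.
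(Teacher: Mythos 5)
Your proposal misses the single decisive structural fact that makes this corollary easy: for $G$ \emph{adjoint} and quasi-split, the Cartan subgroup $T$ is itself quasi-trivial, not just $T^\sc$. This is precisely the content of Remark~\ref{quasi-trivial Cartan} applied to $G$, and it is what the paper's proof leans on. Once $T$ is quasi-trivial, its N\'eron--Raynaud models $\un{T}_\fp$ are connected at every place, so \emph{all} the local indices $[\un{T}_\fp(\hat\CO_\fp):\un{T}_\fp^0(\hat\CO_\fp)]$ are $1$, the unit index $[\T(\BF_q):\T^0(\BF_q)]$ is $1$, and $h(T)=h(T^\sc)$ (the paper gets this last equality via Ono's formula $(3.9.1')$, where all correction terms trivialize because both tori are products of Weil tori). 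This kills the ``residual factor'' entirely rather than requiring the delicate telescoping you flag as your main obstacle; the formula of Lemma~\ref{class number ratio} collapses at once to $h_\iy(G)=|\cok(\pi^\vee_K)|/t_\iy(G)$, whence the Main Theorem gives $\tau(G)=h_\iy(G)\cdot t_\iy(G)=|\cok(\pi^\vee_K)|$.

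Two further steps in your proposal are incorrect or misdirected. First, you assert $h(T^\sc)=1$ when $g=0$, citing vanishing of $\Pic$ of an affine rational curve; but the $S$-class number $h(T^\sc)$ of a Weil torus $R_{L/K}(\BG_m)$ involves the class number of $L$, and the extension $L$ need not have genus $0$ even when $C$ does (cf.\ Example~\ref{generalized example}, or the quadratic examples with many ramified primes). The correct statement, and the one the paper uses, is the \emph{equality} $h(T)=h(T^\sc)$, not that either is $1$. Second, to identify $\cok(\wh\pi_K)$ with $\wh F^\fg$ from the sequence $0\to X^*(T)\to X^*(T^\sc)\to\wh F\to 0$, the long exact sequence requires $H^1(\fg,X^*(T))=0$, not $H^1(\fg,X^*(T^\sc))=0$ as you write. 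Again this vanishing comes from $T$ (the adjoint torus) being a permutation $\fg$-module, i.e.\ quasi-trivial --- the same crucial input you omit. Finally, note that even after reaching $\tau(G)=|\wh F^\fg|$, extracting $h_\iy(G)=1$ and $t_\iy(G)=|\wh F^\fg|$ separately uses a small squeeze: since $\fg_\iy\subset\fg$, one has $\wh F^\fg\subseteq\wh{F_\iy}^{\fg_\iy}$ so $|\wh F^\fg|\le t_\iy(G)$; combined with $h_\iy(G)\cdot t_\iy(G)=|\wh F^\fg|$ and $h_\iy(G)\ge 1$ this forces both claimed identities.
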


\begin{proof}
According to Ono's formula $(3.9.11')$ in \cite{Ono1}, 
considering the isogeny of class groups of $T^\sc$ and $T$, 
there exists a finite set of primes $S$ for which 
$$ \fc{h(T)}{h(T^\sc)} =  \left(\fc{q(\a_S^1)}{\prod_{\fp \in S} q(\a_{\CO_\fp})} \right) 
                   \Big / \left(\fc{q(\a_K^S)}{q(\a_W)} \right) $$
where for any isogeny $\a$, $q(\a)$ stands for $|\cok(\a)| / |\ker(\a)|$ and (see notation in Section \ref{section4}): 
$$  T_S^1(\A) := T^1(\A) \cap T_S,              \ \ T_S(K) := T(\A(S)) \cap T(K),  $$
$$     \a_S^1 := (T^\sc_S)^1(\A) \to T^1_S(\A), \ \ \a_K^S := T^\sc_S(K) \to T_S(K), \ \ \a_W:= W(T^\sc) \to W(T). $$  
As $G^\sc$ is simply-connected and $G$ is adjoint, both quasi-split, 
their Cartan subgroups $T^\sc$ and $T$ are quasi-trivial 
and their integral models are connected everywhere (see Remark \ref{quasi-trivial Cartan}). 
In this case the quantities $q(\a)$ related to $\a^S$ and $\a_K^S$ 
are equals to the ones obtained in the split case on which the class group of each $\BG_m$ 
is the class group of $K$ (see Formulas (3.1.7) and (3.1.8) in \cite{Ono1}), 
thus equal to $1$. 
Also by Lemma \ref{volumes of compact groups} one may deduce 
that $q(\a_{\CO_\fp})=1$ at each $\fp$ and by Lemma \ref{wratio} (recall $g=0$) this can be deduced also for $\a_W$.   
Hence $T^\sc$ and $T$ share the same class number and so by Lemma \ref{class number ratio}, Prop. \ref{D=1} and our Main Theorem 
$\tau(G) = h_\iy(G) \cdot t_\iy(G) = |\cok(\pi^\vee_K)|$ (see in the proof of \ref{D=1}). 
But as $T$ is quasi-trivial, $X^*(T) = \bigoplus_{i=1}^n \mathrm{Ind}_{\{id\}}^{H_i}(\BZ)$ 
where $H_i$ are some finite subgroups of $\fg$,  
thus by Shapiro's lemma $H^1(\fg,X^*(T)) \cong \bigoplus H^1(H_i,\Z)=0$.    
Consequently the exact sequence of character groups (considered as $\fg$-modules): 
$$ 0 \to X^*(T) \to X^*(T^\sc) \to \wh{F} \to 0 $$
gives rise by $\fg$-cohomology to the exact sequence: 
$$ 0 \to X^*(T)^\fg \stackrel{\wh{\pi}_K}{\longrightarrow} X^*(T^\sc)^\fg \to \wh{F}^\fg \to H^1(\fg,X^*(T)) = 0 $$
from which we can see that $\tau(G) = |\cok(\pi^\vee_K)| = |\cok(\wh{\pi}_K)| = |\wh{F}^\fg|$. 
This also shows by Ono's formula \cite[Main Theorem]{Ono3} (see Cor. \ref{onoformula} below) that $\Sh^1(\wh{F})=1$.    
\end{proof}

More generally, our Main Theorem leads us  
under this section settings: $G$ is quasi-split and $g=0$,  
to the following more general result obtained by Ono at $1965$ 
(see Main Theorem in \cite{Ono3}). 
It was designed for groups over number fields 
and been generalized by Behrend and Dhillon at $2009$ 
to the function field case in \cite[Theorem~6.1]{Behrend/Dhillon:2009}.

\begin{cor}[Ono's formula] \label{onoformula} 
One has
$$ \tau(G) = \fc{|\wh{F}^\fg|}{|\Sh^1(\wh{F})|} $$ 
where the denominator is the first Shafarevitch--Tate group assigned to $\wh{F}$. 
\end{cor}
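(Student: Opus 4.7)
The plan is to combine our Main Theorem with Ono's formula for tori and a cohomological analysis of the character-module sequence attached to $F$. The starting point is the Behrend--Dhillon identity (\ref{BD}), which together with the Weil conjecture $\tau(G^\sc)=1$ reads
$$\tau(G) \;=\; |\cok(\wh{\pi}_K)| \cdot \fc{\tau(T)}{\tau(T^\sc)},$$
so the task reduces to expressing each factor on the right through Galois cohomology of $\wh{F}$.

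For the cokernel, I would apply $\fg$-cohomology to
$$0 \to X^*(T) \stackrel{\wh{\pi}}{\longrightarrow} X^*(T^\sc) \to \wh{F} \to 0.$$
Under our standing hypotheses, $G^\sc$ is simply connected and quasi-split, so $T^\sc$ is quasi-trivial (Remark~\ref{quasi-trivial Cartan}); hence $X^*(T^\sc)$ is a permutation $\fg$-module and Shapiro's lemma gives $H^1(\fg,X^*(T^\sc))=0$. The long exact sequence thus collapses to
$$0 \to \cok(\wh{\pi}_K) \to \wh{F}^{\fg} \to H^1(\fg,X^*(T)) \to 0,$$
yielding $|\cok(\wh{\pi}_K)| = |\wh{F}^{\fg}|/|H^1(\fg,X^*(T))|$. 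For the torus quotient I would invoke Ono's formula for tori in the function-field setting \cite[Thm.~6.1]{Behrend/Dhillon:2009}, which reads $\tau(T) = |H^1(\fg,X^*(T))|/|\Sh^1(T)|$, together with $\tau(T^\sc)=1$ (the latter since $T^\sc$ is quasi-trivial, so both numerator and denominator are trivial). Multiplying and cancelling $|H^1(\fg,X^*(T))|$ produces
$$\tau(G) \;=\; \fc{|\wh{F}^{\fg}|}{|\Sh^1(T)|}.$$

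The remaining step is to identify $|\Sh^1(T)|$ with $|\Sh^1(\wh{F})|$. For this I would combine two ingredients. First, Tate--Nakayama duality over the global function field $K$ provides a perfect pairing $\Sh^1(T) \times \Sh^2(X^*(T)) \to \BQ/\BZ$, forcing $|\Sh^1(T)| = |\Sh^2(X^*(T))|$. Second, from the Shafarevich--Tate long exact sequence attached to $0 \to X^*(T) \to X^*(T^\sc) \to \wh{F} \to 0$ I would use that $X^*(T^\sc)$ is a permutation module to conclude $\Sh^1(X^*(T^\sc))=0$ (via Shapiro, since $\Hom(H,\BZ)=0$ for profinite $H$) and $\Sh^2(X^*(T^\sc))=0$ (via Shapiro reducing to $\Sh^1(L,\BQ/\BZ)$ for finite separable extensions $L/K$, which vanishes by global class field theory over $K$ together with Chebotarev). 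The connecting homomorphism then supplies an isomorphism $\Sh^1(\wh{F}) \cong \Sh^2(X^*(T))$, and assembling the two pieces gives $|\Sh^1(\wh{F})| = |\Sh^1(T)|$, finishing the proof.

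The principal obstacle is this last step, where one must invoke the function-field versions of Tate--Nakayama duality for tori and of the class-field-theoretic vanishing $\Sh^1(L,\BQ/\BZ)=0$; both are standard in the literature but each needs to be set up carefully over $K$ to ensure that the finiteness and duality assertions apply.
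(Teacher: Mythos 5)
Your proposal is correct and follows essentially the same route as the paper: combine the Behrend--Dhillon identity (\ref{BD}) with the Weil conjecture, compute $|\cok(\wh{\pi}_K)| = |\wh{F}^\fg|/|H^1(\fg,X^*(T))|$ from the character sequence, and invoke Ono's torus formula (\ref{Onotorus}). The only difference is that you supply an argument (Tate--Nakayama duality $|\Sh^1(T)|=|\Sh^2(X^*(T))|$ plus vanishing of $\Sh^1$ and $\Sh^2$ for the permutation module $X^*(T^\sc)$) for the identity $|\Sh^1(T)| = |\Sh^1(\wh{F})|$, which the paper simply quotes from \cite[p.~102]{Ono3}; your derivation is sound, and you also make explicit the fact $\tau(T^\sc)=1$ that the paper uses tacitly.
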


\begin{proof}
Applying Galois $\fg$-cohomology to the sequence of groups of characters
$$ 0 \to X^*(T) \stackrel{\wh{\pi}}{\rightarrow} X^*(T^\sc) \to \wh{F} \to 0 $$
where $\wh{F}:=\Hom(F \otimes_K K^s,\BG_{m,K^s})$ yields the relation 
\begin{equation} \label{cokKcard}
|\cok(\wh{\pi}_K)| = \fc{|\wh{F}^\fg|}{|H^1(\fg,X^*(T))|}. 
\end{equation} 
The following formula for the Tamagawa number of a torus is taken from \cite[Main~Theorem]{Ono2}, 
\cite[Corollary~3.3]{Oes} 
\begin{equation} \label{Onotorus}
\tau(T) = \fc{|H^1(\fg,X^*(T))|}{|\Sh^1(T)|}. 
\end{equation}
Together with $|\Sh^1(T)| = |\Sh^1(\wh{F})|$  (\cite[p.~102]{Ono3}) 
we conclude
\begin{align} \label{tauG tauT}
\tau(G) & \stackrel{(\ref{BD})}{=}        \tau(G^\sc) \cdot \tau(T) \cdot |\cok(\wh{\pi}_K)| \notag \\
        & \stackrel{(\ref{cokKcard})}{=}  \tau(G^\sc) \cdot \tau(T) \cdot \fc{|\wh{F}^\fg|}{|H^1(\fg,X^*(T))|} \notag \\
        & \stackrel{(\ref{Onotorus})}{=}  \tau(G^\sc) \cdot \fc{|\wh{F}^\fg|}{|\Sh^1(\wh{F})|}. \qedhere
\end{align}
\end{proof}

\bk

In the following examples we refer to a construction which was demonstrated by Ono over number fields, in \cite[Example~6.3]{Ono2}. 
Our ground field is $K=\BF_q(t)$ with odd characteristic and $\iy$ is chosen to correspond to the pole of $t$. 
At each example we consider another extension $L$ of $K$.  
We denote $\fg = \mathrm{Gal}(L/K)$.  
The group $G^\sc = R_{L/K}(\textbf{SL}_2)$ is the universal cover 
of the semisimple and quasi-split $K$-group $G = G^\sc / F$ 
where $F:=R^{(1)}_{L/K}(\mu_n) = \ker[R_{L/K}(\mu_n) \to \mu_n]$.  
Let $S$ be the diagonal $K$-split maximal torus in $G$.   
Then $T = \text{Cent}_G(S)$ is a maximal torus of $G$ 
and is isomorphic as a $\fg$-module to the $K$-torus $\BG_m \times R^{(1)}_{L/K}(\BG_m)$ 
where the right hand factor is the norm torus, namely the kernel of the norm map (see \cite[Example~5.6]{San})
$$  R_{L/K}^{(1)}(\BG_m) := \ker \left[R_{L/K}(\BG_m) \stackrel{N_{L/K}}{\longrightarrow} \BG_m \right]. $$
Its preimage in $G^\sc$ is the Weil torus $T^\sc = R_{L/K}(\BG_{m,L})$, 
fitting into the exact sequence
$$ 1 \to F \to T^\sc \stackrel{\pi}{\longrightarrow} T \to 1. $$
Over any $\hat{\CO}_\fp$, the norm torus is $\Sp \hat{\CO}_\iy[a,b]/(a^2-\fp b^2-1)$.   
Its reduction provides at each place $\fp$, $e_\fp$ connected components, 
where $e_\fp$ stands for the ramification index there (see \cite[Example~3.3]{Bit}),    
i.e. $[\un{T}_\fp(\hat{\CO}_\fp):\un{T}^0_\fp(\hat{\CO}_\fp)] = e_\fp$. 
In this construction $|\cok(\wh{\pi}_K)|=1$ .

\begin{example} \label{quadratic non split}
We start by $L = \BF_{q^2}(t)$ obtained by extending the field of constants of $K$. 
Since the extension is quadratic, $F_\iy = \mu_2$ is $\hat{K}_\iy$-split whence $t_\iy(G) = |\wh{F_\iy}| = |F_\iy| = 2$. 
Moreover, as $L/K$ is imaginary and totally unramified we have $h(T)/h(T^\sc) = 2$ (see \cite[Example~1]{Mor}). 
Thus by Lemma~\ref{class number ratio} $h_\iy(G) = 1$     
whence according to our Main Theorem $\tau(G) = h_\iy(G) \cdot t_\iy(G) = 2$.  
\end{example} 

\begin{example}
Now let $L=K(\sqrt{d})$ where $d$ is a product of $m$ distinct finite primes $\fp_i$. 
As before, $F_\iy = \mu_2$ and $t_\iy(G)=2$.  
Recall that the norm torus is the only factor in $T^\sc$ and $T$ which might have a disconnected reduction. 
This time, since each $\fp_i$, as well as $\iy$, ramifies in $L$ with $e_\fp = 2$ we have 
$$ \prod_\fp[\un{T}_\fp(\hat{\CO}_\fp):\un{T}_\fp^0(\CO_{\fp})] = 2^{m+1} $$
while:
$$ [\T(\BF_q):\T^0(\BF_q)] = |\{x \in \BF_q : x^2=1 \}| = |\{\pm 1\}| = 2. $$
Moreover, as $h(T^\sc)/h(T) = 2^{m-1}$ (see \cite[Example~1]{Mor})  
and $\cok(\pi^\vee_K)=1$, by Lemma \ref{class number ratio} we get $h_\iy(G) =1$. 
Altogether, we see by the Main Theorem that $\tau(G)$ remains equal to $2$, independently of $m$. 
Both this result and the one of the previous example agree with Ono's formula \ref{onoformula}; 
indeed, as $L/K$ is cyclic, $\Sh^1(\wh{F})=1$ and $\tau(G) = |\wh{F}^\fg| = |F| = 2$. 
\end{example}

\begin{example} \label{generalized example}
Let $L = K(\Lm_f)$ be the $f$-cyclotomic extension where $f$ is an irreducible polynomial of degree $d$. 
Then $\fg$ is cyclic of order $n=q^d-1$. 
We still have
$h(T)/h(T^\sc) =1$ (\cite[Example~2]{Mor}). 
The only places which ramify in $L$ are $\iy$ with $e_\iy = q-1$ 
and $(f)$ which is totally ramified (see \cite[Theorem~3.2]{Hay}). 
Therefore $[\un{T}_{(\iy)}(\CO_{(\iy)}):\un{T}^0_{(\iy)}(\CO_{(\iy)})] = q-1$ and 
$[\un{T}_{(f)}(\CO_{(f)}):\un{T}^0_{(f)}(\CO_{(f)})] = n$.   
On the units group, since $q-1|n$ we have
$$ [\T(\BF_q):\T^0(\BF_q)] = |\{x \in \BF_q: x^n=1 \}| = |\BF_q^\times| = q-1. $$
Moreover, $t_\iy(G) = |\wh{F_\iy}^{\fg_\iy}| = |\mu_n| = n$ and as before $\cok(\wh{\pi}_K)=1$. 
So by Lemma \ref{class number ratio} we get 
$$ h_\iy(G) = \fc{\prod_\fp[\un{T}_\fp(\hat{\CO}_\fp):\un{T}^0_\fp(\hat{\CO}_\fp]}{t_\iy(G) \cdot [\T(\BF_q):\T^0(\BF_q)]} = \fc{(q-1) \cdot n}{n \cdot (q-1)} = 1. $$
Thus by our Main Theorem we conclude that $\tau(G) = t_\iy(G) = n$.   
Indeed, as $L/K$ is cyclic, $\Sh^1(\wh{F})=1$ and $\tau(G) = |\wh{F}^\fg| = |\mu_n| = n$, 
which agrees again with Ono's formula \ref{onoformula}.  
\end{example}  

\bk


\end{document}